\newtheorem{theorem}{Theorem}[section]
\newtheorem{lemma}[theorem]{Lemma}
\newtheorem{definition}{Definition}[section]
\theoremstyle{definition}
\newtheorem{remark}{Remark}[section]
\numberwithin{equation}{section}
\newcommand{\norm}[1]{\left\Vert#1\right\Vert}
\newcommand{\abs}[1]{\left\vert#1\right\vert}
\begin{document}
\title{Multi-phase image segmentation by the Allen--Cahn Chan--Vese model}
\date{}
\author{Chaoyu Liu \thanks{Department of Applied Mathematics, The Hong Kong Polytechnic University, Hung Hom, Hong Kong (polyucy.liu@connect.polyu.hk).}
	\and Zhonghua Qiao \thanks{Department of Applied Mathematics \& Research Institute for Smart Energy, The Hong Kong Polytechnic University,  Hung Hom, Hong Kong ({zhonghua.qiao@polyu.edu.hk}). Z. Qiao's work is partially supported by the Hong Kong Research Grant Council RFS grant RFS2021-5S03, GRF grant 15302919 and the Hong Kong Polytechnic University internal grant 4-ZZLS.}
	\and Qian Zhang\thanks{Corresponding author. Department of Applied Mathematics, The Hong Kong Polytechnic University, Hung Hom, Hong Kong ({qian77.zhang@polyu.edu.hk}). Q. Zhang's research is supported by the 2019 Hong Kong Scholar Program G-YZ2Y and the CAS AMSS-PolyU Joint Laboratory of Applied Mathematics grant 1-ZVA8.}}

\maketitle

\begin{abstract}
This paper proposes an Allen--Cahn Chan--Vese model to settle the multi-phase image segmentation. We first integrate the Allen--Cahn term and the Chan--Vese fitting energy term to establish an energy functional, whose minimum locates the segmentation contour.  The subsequent minimization process can be attributed to variational calculation on fitting intensities and the solution approximation of several Allen--Cahn equations, wherein $n$ Allen--Cahn equations are enough to partition $m = 2^n$ segments.  The derived Allen--Cahn equations are solved by  efficient numerical solvers with exponential time integrations and finite difference space discretization. The discrete maximum bound principle and energy stability of the proposed numerical schemes are proved. Finally, the capability of our segmentation method is verified in various experiments for different types of images.
\end{abstract}

\textbf{Key Words}: Multi-phase image segmentation, Allen--Cahn Chan--Vese model, graph Laplacian, maximum principle, energy stability

\section{Introduction}

Image segmentation aims to partition a given image into several disjoint regions with uniform characteristics such as colors, intensities, and textures. Benefitting from image segmentation, many critical subsequent image applications can be well addressed, for instance, recognition, labeling, and detection. Compared with two-phase image segmentation that only needs to divide the given image into two disjoint parts, multi-phase image segmentation is more practical and challenging \cite{Tai2021JSC}.

There exist many multi-phase image segmentation methods from different perspectives, e.g., region growing methods, graph cut methods, clustering methods, and active contour models.  Wherein active contour models have been studied extensively and achieved great success in the last two decades. For image segmentation with active contour models, it can be attributed to a minimization process of the objective energy functional,  and then it follows the evolution of the segmentation curve to get the desired segmentation result ultimately.
 The key in the extension from two-phase to multi-phase image segmentation is how to give the characteristic representation for each segment.   Samsons et al. extended the work in \cite{Samson2000_IJCV} and adopted $m$ level set functions to represent $m$ phases. Under this circumstance, the model generates overlap or vacuum issues for which an extra constraint is added \cite{Lie2006_MC,Lie2006_TIP}. Then Chan et al. proposed a multi-phase level set method \cite{Chan2003AMS,Vese2002IJCV} based on two-phase segmentation and multi-phase motion \cite{CV2001TIP,Zhao1996JCP}, in which $n$ level set functions can represent $m = 2^n$ disjoint partitions. This method reduces the number of the level set functions and fixes the issue of overlap or vacuum without constraints. However, the level set method is always trapped in the limit of computation efficiency and re-initialization. In \cite{ma2020fast, Wang_2017,Wang_2019}, characteristic functions are adopted to depict each phase and a concave functional is used to approximate the contour length, which can be generalized to many active contour models and applied to binary and multi-phase segmentations. Then an iterative convolution-thresholding method (ICTM) is proposed in this framework to solve active contour models in an efficient and energy stable way. But for images with noise, the segmentation results of ICTM are not so satisfying when it is applied to some noise-sensitive active contour models. 

In active contour models, the objective energy functional consists of a regularization term and a fitting term. The regularization term is used to minimize the perimeter of the closed curve. Because $\Gamma$ convergence theory \cite{Ambrosio1990_CPAM,Jung2007_siam} links the perimeter of the closed curve with the phase transition model (Modica--Mortola) \cite{Esedoglu2006_JCP}, we proposed the Allen--Cahn equation, known as the phase-field model, in conjunction with the Chan--Vese fitting term \cite{CV2001TIP} to handle two-phase image segmentation in \cite{Zhang2021_NMTMA}. The corresponding energy functional is given by
\begin{align}\label{CV2d}
E^\epsilon(u,C_1,C_2) = \int_{\Omega}\left(\epsilon\abs{\nabla u}^2 + \frac{1}{\epsilon}W(u)+F(u,C_1,C_2)\right) dx,
\end{align}
where $$F(u,C_1,C_2)= \lambda_1[(C_1-I)^2 H(u-\frac{1}{2})] + \lambda_2[(C_2-I)^2(1-H(u-\frac{1}{2}))].$$
An alternating direction minimization method {(ADMM)} was used to update the phase variable $u$, the two-phase average intensities $C_1$ and $C_2$ iteratively.
For the derived Allen--Cahn equation, the phase variable $u$ eventually evolves into two phases, 0 and 1. In this way, one phase variable $u$ can partition the given image into two phases. In this paper, we extend \eqref{CV2d} to a multi-phase image segmentation model. If there are $m$ phases to be separated, we need at least $n = \lceil \log_2{m} \rceil$ phase variables. Then, by adopting the ADMM for the minimization process, we need to solve $n $ Allen--Cahn equations.


Energy stability is an intrinsic property of the Allen--Cahn equation. Many energy stable numerical schemes have been developed in the simulation of the Allen--Cahn equation, such as the convex splitting schemes \cite{ShenCMS_2016}, stabilized schemes \cite{Li2016_SINUM,Shen2010_DCDS}, invariant energy quadratization schemes \cite{YangJCP_2016}, scalar auxiliary variable schemes \cite{ShenSAV}, and so on. The maximum bound principle is another vital character of the Allen--Cahn equation. In this paper, we employ the exponential time differencing (ETD) method to solve the derived Allen--Cahn equations. The proposed ETD schemes are proved to be unconditional energy stability and maximum bound principle preserving following the ideas in \cite{Li2019siam,Li2021sirev,Fu2022}. It guarantees the stability of our ADMM-ETD algorithm and locates the phase variables in a given range. Moreover, the application of the 2D Fast Fourier transform (FFT) for solving the ETD schemes accelerates the computation notably. For more details about higher-order ETD schemes, please refer to \cite{Hoch2010AN,JuCMS2015,Zhu2016_JSC} and the references therein.

It is worth noting that the initial contour significantly influences the final segmentation results due to the non-convexity of the objective energy functional. A feasible solution for this initialization issue is to relax the energy functional into a convex functional \cite{Cai2013SIAM,Cai2015JSC,AAMM2020, Wu2021_SP}. However, convex relaxation deprives all the non-convex parts of the original energy functional, and hence it may entail the loss of non-convex information of the segments and reduce models' capability of preserving the sharpness and neatness of edges \cite{Chan2018}.  In \cite{Liu2021,Zhang2021_NMTMA}, an inhomogeneous graph Laplacian initialization method (IGLIM) was proposed, where an edge detection result was chosen to be the initial contour. This method can alleviate the sensitivity of initialization.  For multi-phase image segmentation, a generalized initialization method of IGLIM, called the multi-phase inhomogeneous graph Laplacian initialization method (Multi-IGLIM) \cite{Liu2022}, will be employed to select the initial contours for the whole iterative process.

The rest of this paper is organized as follows. In Section 2, the four-phase image segmentation is utilized as an example to illustrate our model. Section 3 is devoted to giving the numerical method, including the choice of initial contour and the energy minimization process. The Allen--Cahn equations derived by ADMM will be solved by the ETD schemes for temporal discretization, and the central finite difference scheme for spatial discretization. The discrete maximum bound principle and energy stability of the ETD schemes are proved in this section. Section 4 contains numerical experiments for various images. We shown the effectiveness of the proposed model by comparing it with other state-of-the-art segmentation methods. The theoretical results proved in Section 3 are validated numerically. The paper ends with some conclusions in Section 5.

\section{The Allen--Cahn Chan--Vese model}
Referring to the work in \cite{Liu2021,Zhang2021_NMTMA,Yang2019_JSC}, we give the energy functional based on the Allen--Cahn Chan--Vese  (ACCV) model for the multi-phase image segmentation as follows,
\begin{align}\label{eqn:AC_model}
E^\epsilon = \int_{\Omega} \sum\limits_i^n\left(\epsilon \abs{\nabla u_i}^2 + \frac{1}{\epsilon}W(u_i)\right)  + F(u_1,u_2,\cdots,u_n,I) \mathrm{d}x, \ i = 1,\cdots,n.
\end{align}
Here $n$ is the number of phase variables, by which we can segment $m = 2^n$ phases \cite{Vese2002IJCV}.
 $W(u) = u^2(u-1)^2$ is a double-well potential function. $F$ is a fitting term involving $u_i,i=1,\cdots,n$ and the original image $I$.  Each phase variable $u_i$ eventually evolves to two phases 0 and 1. We assume the interface, i.e., the edge, lies at the contour of $\displaystyle u_i=\frac{1}{2}$. In Figure \ref{fig:48phase}, we can see the segmentation of 4 and 8 phases.

\begin{figure}[!htbp] \centering
		\subfigure[4 phases] {
			\includegraphics[width=0.47\columnwidth]{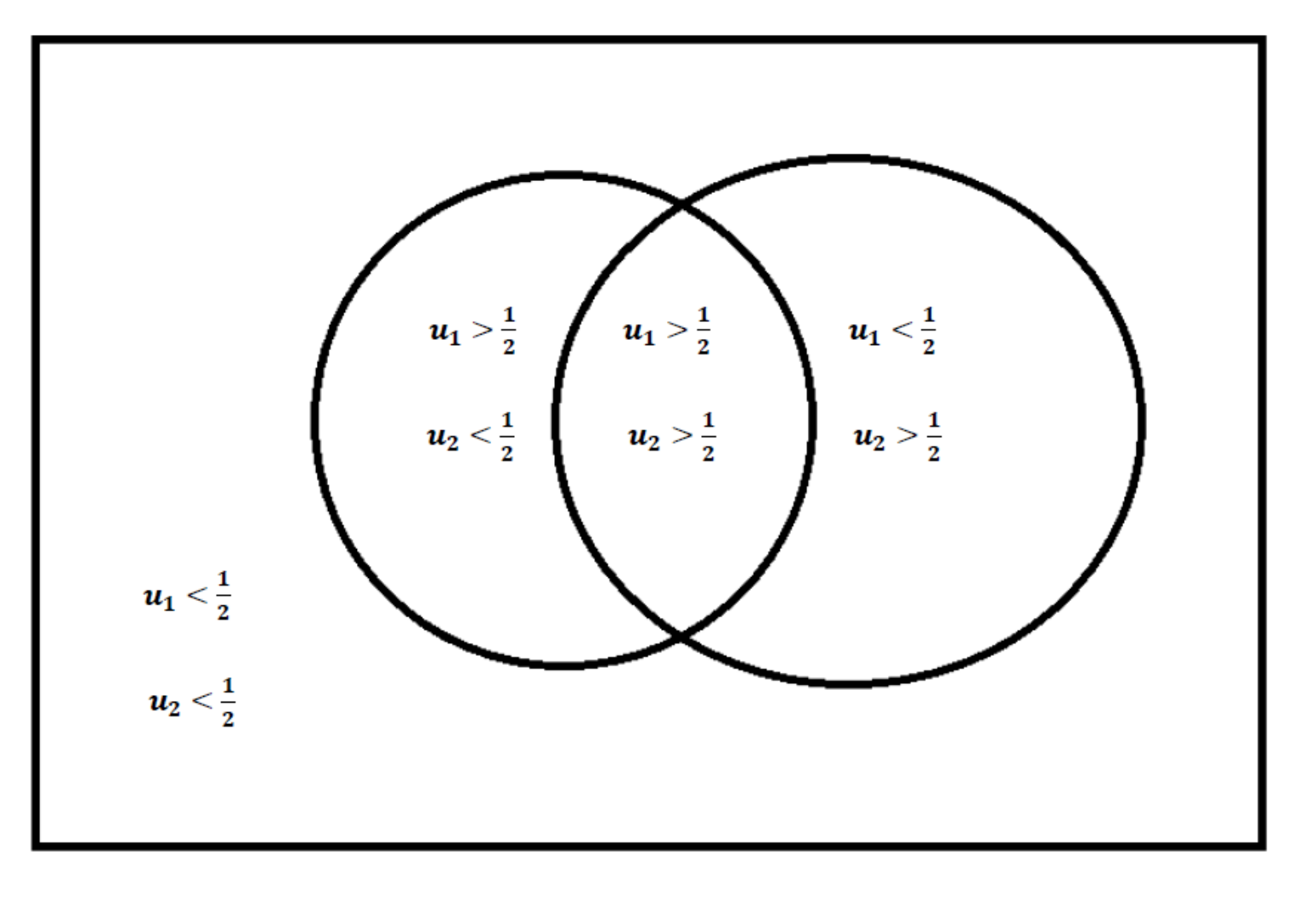}
		}
		\subfigure[8 phases] {
			\includegraphics[width=0.47\columnwidth]{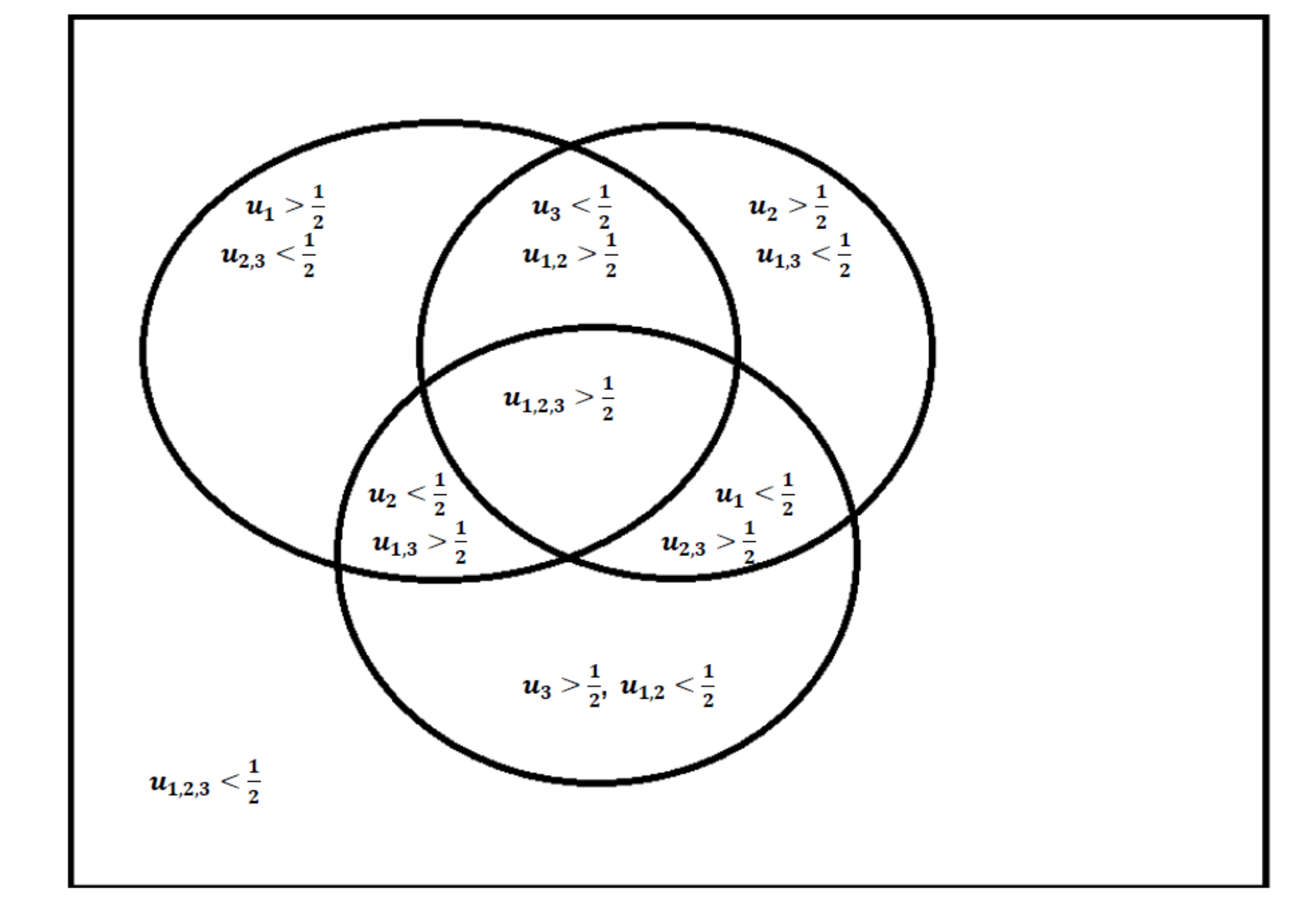}
		}
		\caption{\label{fig:48phase} Variable $u_i$ on 4 phases and 8 phases. }
	\end{figure}
For a clear description, we take $n = 2$ to do the illustration in the follow parts. All derivations can be easily extended to the case with larger $n$. We need to minimize the following energy functional
\begin{align}\label{eqn:energy_4_phase}
E(u_1,u_2,\mathbf{C}) = &\int_{\Omega}\sum\limits_{i=1}^2\left(\epsilon\abs{\nabla u_i}^2 + \frac{1}{\epsilon}W(u_i)\right)+ F(u_1,u_2,I)  \mathrm{d}x,
\end{align}
where  $\mathbf{C} = (\mathbf{C}_{11},\mathbf{C}_{12},\mathbf{C}_{21},\mathbf{C}_{22})$ and
\begin{equation}
\begin{split}\label{eqn:fitterm4}
F(u_1,u_2,I) = &\lambda  \sum_{r=1}^\omega [(I(r)- \mathbf{C}_{11}(r))^2H(u_1 - \frac{1}{2})H(u_2 - \frac{1}{2}) \\
 + &(I(r)- \mathbf{C}_{12}(r))^2H(u_1 - \frac{1}{2})\left(1-H(u_2 - \frac{1}{2} )\right) \\
 + &(I(r)- \mathbf{C}_{21}(r))^2\left(1-H(u_1 - \frac{1}{2})\right)H(u_2 - \frac{1}{2} )\\
 + & (I(r)- \mathbf{C}_{22}(r))^2\left(1-H(u_1 - \frac{1}{2})\right)\left(1-H(u_2 - \frac{1}{2} )\right)]
\end{split}
\end{equation}
with the Heaviside function $H(u)$ given by
 \begin{align*}
H(u) =  \left\{
\begin{array}{rc}
0  ,   &  \text{for}\ u < 0,\\
1    , &      \text{for}\ u \geq 0.
\end{array} \right.
\end{align*}
In the above description, we assume that the original image $I$ is a color image in space $\mathbb{R}^{M_1\times M_2\times \omega}$.  Compared with the grayscale image with only one channel $\omega=1 $, the color image has three channels (R, G, B), which gives $\omega =3$.
Therefore, the intensity averages
${\bf {C}_{11}, {C}_{12},{C}_{21},{C}_{22}}$ are constant vectors in space $\mathbb{R}^{M_1\times M_2\times 3}$.

\section{The numerical method}

In this section, we propose efficient numerical schemes to solve the ACCV model.
 An alternating direction minimization method is used to minimize the energy functional \eqref{eqn:AC_model}.

\subsection{Initialization}
 As mentioned in \cite{Liu2021,Liu2022, Zhang2021_NMTMA}, initialization is a vital step for the image segmentation.
 Here, we use the Multi-IGLIM proposed in \cite{Liu2022} to generate the
 initial contour for the multi-phase image segmentation. Multi-IGLIM comprises three stages: finding the zero-cross points of the generalized
 inhomogeneous graph Laplacian operator, the K-means clustering, and denoising based on the diagonal connectivity.
 For the sake of the completeness of this paper, we restate the essential part of the Multi-IGLIM. The readers can refer to \cite{Liu2022}
 for more detailed descriptions.

\subsubsection{Generalized inhomogeneous graph Laplacian operator}
In \cite{Liu2021}, we give the definition of the inhomogeneous graph Laplacian operator for a grayscale image, which can be easily extended to a color
image by summation of information on different channels.

Let $\Omega$ be a 2D discrete image domain, and $I$ be an image defined on it with $M_1\times M_2$ pixels. For pixel $x_0 = (i,j)\in \Omega$, the generalized inhomogeneous graph Laplacian operator $L$ for a color image is given by
\begin{equation}\label{Graph_L}
L(x_0)= \sum\limits_{\ell =1}^{8}\sum_{r=1}^{\omega} (c_{\ell} I_{i,j,r}^{\ell} -I_{i,j,r}),
\end{equation}
where $I_{i,j,r}^{\ell}$ is intensity value of the $\ell$-th neighbour point of $I_{i,j}$ in the $r$-th channel. The relevant 8 neighbors are
$$
\begin{aligned}
	& I_{i,j,r}^1 = I_{i-1,j-1,r}, I_{i,j,r}^2 = I_{i-1,j,r}, I_{i,j,r}^3 = I_{i-1,j+1,r}, I_{i,j,r}^4 = I_{i,j+1,r}, \\
	& I_{i,j,r}^5 = I_{i+1,j+1,r}, I_{i,j,r}^6 = I_{i+1,j,r},
	I_{i,j,r}^7 = I_{i+1,j-1,r}, I_{i,j,r}^8 = I_{i,j-1,r}, 
\end{aligned}
$$
and the weighted coefficient
\begin{align} \label{eqn:cl}
c_{\ell}=\frac{\sum\limits_{r=1}^{\omega}e^{\kappa(I_{i,j,r}-I_{i,j,r}^{\ell})^2}}{\sum\limits_{\ell=1}^{8} \sum\limits_{r=1}^{\omega}e^{\kappa(I_{i,j,r}-I_{i,j,r}^{\ell})^2}},
\end{align}
which is between $0$ and $1$. $\kappa$ in \eqref{eqn:cl} is a given non-negative parameter.

Subsequently, we choose a threshold value $\sigma$ to locate the zero-cross points of the generalized inhomogeneous Laplacian operator by
condition $ \abs{L(x_0)}>\sigma$.  Finally, all zero-cross points are collected in the set $\mathcal{S}$ to represent rough initial edges.

\subsubsection{The K-means clustering}
This part is devoted to dividing the rough initial edges obtained in the last stage into different phases. We utilize the K-means clustering algorithm proposed in
\cite{Macqueen1967MSP}, minimizing the sum of distances from each object to its cluster centroid for all clusters. The edges of each phase have different intensities as well as the interior region. Consequently, the K-means clustering algorithm can help segment the rough initial edges into $m$ phases. 

\subsubsection{A denoising method based on the connectivity of edge points}
Due to the influence of noise, the denoising process is necessary. Similar to \cite{Liu2021}, the diagonal connectivity is used as a guiding principle to remove most noise points from the rough initial color contours. The edge of an objective should have connectivity, which means that points of edges connect with each other, while the noise doesn't possess this property.

\begin{definition}\cite{Liu2021}
The diagonally connected points for $x_0 = (i,j)$ are defined as follows:
\begin{align*}
	&S_1 = \{(i-1,j-1),(i,j-1),(i-1,j)\}, \ S_2 = \{(i-1,j+1),(i,j+1),(i-1,j)\},\\
	&S_3 = \{(i+1,j-1),(i,j-1),(i+1,j)\}, \ S_4 = \{(i+1,j+1),(i,j+1),(i+1,j)\}.
\end{align*}
We call $x$ a diagonally connected point if both $S_1$ and $S_4$ or both $S_2$ and $S_3$ have at least one pixel that also belongs to the detected edge.
\end{definition}
To eliminate noise in the rough initial color contours, we keep all the diagonally connected points and remove the other points.
The denoising process needs to be repeated $M$ times where $M$ is a pre-setting small integer.
Now combining all above treatments,  we restate the Multi-IGLIM as follows.
\begin{algorithm}
	\caption{\leftline{\bf{Multi-IGLIM} \cite{Liu2022}}}
		\label{multi-IGLIM}
\begin{algorithmic}[1]
	\REQUIRE{$I, \kappa, \sigma, M, m$}
	\ENSURE{Initial contours for $m$ phases $v^0 = (v_1^0,v_2^0,\cdots,v_m^0)$}
	\STATE{Compute the inhomogeneous graph Laplacian value of each pixel by \eqref{Graph_L}}.
	\STATE{Set a small positive number $\sigma$. Put all together pixels $x$ whose $|L(x)|\ge \sigma$ into a set denoted as $\mathcal{S}$}.
	\STATE{Divide into $m$ separate sets all pixels of $\mathcal{S}$ by K-means according to their pixel values. Denote these sets as $\mathcal{S}_1,\cdots,\mathcal{S}_m$}.
	\STATE{Go through every pixel in $\mathcal{S}_1,\cdots,\mathcal{S}_m$ and judge whether it is diagonally connected in their set. If not, remove it from their set.}
	\STATE{Set an appropriate integer $M$, and repeat Step 4 for $M$ times.}
	\STATE{Output the characteristic function of $\mathcal{S}_i, i = 1,\cdots,m$, which is defined as $v_i^0$, as the initial contours.}
\end{algorithmic}
\end{algorithm}

\subsection{Energy minimization}
The situation of two phase variables, i.e., four phases image segmentation, is also taken as an instance to explain the energy minimization process. We will introduce the adopted minimization method,  numerical schemes for the derived Allen--Cahn equations, and prove the discrete maximum bound principle and energy stability. 

\subsubsection{The transition from $m$ phases to $n$ phase-field variables}
From the Multi-IGLIM in Algorithm \ref{multi-IGLIM}, we get the denoising edge detection results $v_i^0$ for $m$ phases.  However, the number of the initial values for the minimization process for the ACCV model \eqref{eqn:AC_model} is $n =\lceil \log_2{m} \rceil$. 
In this part, $m=4,n=2$. We first sort the results of the Multi-IGLIM, $v^0_i$, in ascending order by the perimeter for each phase. Then the initial contours for two phase variables are
\begin{align}
&u^0_1 = v^0_2 + v^0_3,\label{integration_method1}\\
&u^0_2 = v^0_2 + v^0_4.\label{integration_method2}
\end{align}
In the case of overlapped edges, $u^0_i$ will be beyond $1$ somewhere, for which we will force them to equal to $1$ directly. The integration method \eqref{integration_method1}-\eqref{integration_method2}  is not unique, and other reasonable treatment can also be considered.

\subsubsection{The alternating direction minimization method}
This part introduces the alternating direction minimization method (ADMM) to minimize the energy functional \eqref{eqn:energy_4_phase}.
The minimization procedure can be written as
\begin{align}
&\mathbf{u}^{k+1} = \text{argmin} \ E(\mathbf{u}^{k},\mathbf{C}^k), \label{eqn:iter1} \\
&\mathbf{C}^{k+1} = \text{argmin} \ E(\mathbf{u}^{k+1},\mathbf{C}^k), \label{eqn:iter3}
\end{align}
with $\mathbf{u} = (u_1,u_2).$

Since the Heaviside function $H(u)$ in \eqref{eqn:fitterm4} is discontinuous,  we give $H_{\epsilon_1}$ as a regularization of the Heaviside function $H(u)$ in practice
\begin{align*}
H_{\epsilon_1}(u) = \begin{cases}&  \frac{1}{2\epsilon_1}\left(u + \frac{\epsilon_1}{\pi} \sin(\frac{\pi u}{ \epsilon_1})\right) + \frac{1}{2}, \ \  \text{for}\abs{u} \leq \epsilon_1, \\
& 1,  \ \  \text{for}\ u > \epsilon_1, \\
&0,  \ \  \text{for}\ u < -\epsilon_1.
\end{cases}
\end{align*}
Here, the value of $\epsilon_1$ is requested to be less than $\frac{1}{2}$. The derivative of $H_{\epsilon_1}(u)$ is an approximation of Dirac function $\displaystyle\delta(u)=\frac{\mathrm{d}}{\mathrm{du}}H(u)$ given by \cite{Li2005CVPR}
\begin{align*}
\delta_{\epsilon_1}(u)  = \begin{cases}& \frac{1}{2\epsilon_1}\left(1 + \cos(\frac{\pi u}{ \epsilon_1})\right), \ \  \text{for}\abs{u} \leq \epsilon_1, \\
                     &0,  \ \  \text{for}\ \abs{u} > \epsilon_1.
        \end{cases}
\end{align*}
For a fixed $u_i, i = 1,2$ and $r = 1, \cdots, \omega$, the variational calculation gives
\begin{subequations}\label{eqn:c}
\begin{align}
&\mathbf{C}_{11}(r) = \frac{\int_{\Omega} H_{\epsilon_1}(u_1-\frac{1}{2})H_{\epsilon_1}(u_2-\frac{1}{2})I(r) \mathrm{d}x}{\int_{\Omega} H_{\epsilon_1}(u_1-\frac{1}{2})H_{\epsilon_1}(u_2-\frac{1}{2})\mathrm{d}x} , \label{eqn:split2_1}\\
&\mathbf{C}_{12}(r) = \frac{\int_{\Omega} H_{\epsilon_1}(u_1-\frac{1}{2})\left(1-H_{\epsilon_1}(u_2-\frac{1}{2})\right)I(r)\mathrm{d}x}{\int_{\Omega} H_{\epsilon_1}(u_1-\frac{1}{2})\left(1-H_{\epsilon_1}(u_2-\frac{1}{2})\right) \mathrm{d}x} , \label{eqn:split2_2}\\
&\mathbf{C}_{21}(r)= \frac{\int_{\Omega} \left( 1-H_{\epsilon_1}(u_1-\frac{1}{2})\right) H_{\epsilon_1}(u_2-\frac{1}{2})I(r) \mathrm{d}x}{\int_{\Omega} \left( 1-H_{\epsilon_1}(u_1-\frac{1}{2})\right) H_{\epsilon_1}(u_2-\frac{1}{2}) \mathrm{d}x} , \label{eqn:split2_3} \\
&\mathbf{C}_{22}(r) = \frac{\int_{\Omega} \left( 1-H_{\epsilon_1}(u_1-\frac{1}{2})\right)\left(1- H_{\epsilon_1}(u_2-\frac{1}{2})\right)I(r) \mathrm{d}x}{\int_{\Omega} \left( 1-H_{\epsilon_1}(u_1-\frac{1}{2})\right)\left( 1-H_{\epsilon_1}(u_2-\frac{1}{2})\right) \mathrm{d}x}.\label{eqn:split2_4}
\end{align}
\end{subequations}
For \eqref{eqn:iter1}, we only need to solve the following two Allen--Cahn equations to the steady state,
\begin{align}
\frac{\partial u_1}{\partial t} = 2\epsilon\Delta u_1 - \frac{1}{\epsilon}w(u_1) - f_1(u_1,u_2,\mathbf{C}),\label{MAC_1}\\
\frac{\partial u_2}{\partial t} = 2\epsilon\Delta u_2 - \frac{1}{\epsilon}w(u_2) - f_2(u_1,u_2,\mathbf{C}),\label{MAC_2}
\end{align}
with the homogeneous Neumann boundary condition.
Here,
\begin{align*}
f_1(u_1,u_2,\mathbf{C}) = & \lambda\sum\limits_{r=1}^\omega \left[(I(r)-\mathbf{C}_{11}(r))^2 -(I(r)-\mathbf{C}_{21}(r))^2 \right]H_{\epsilon_1}(u_2-\frac{1}{2})\delta_{\epsilon_1}(u_1 - \frac{1}{2})  \notag \\
&+ \left[(I(r)-\mathbf{C}_{12}(r))^2 - (I(r)-\mathbf{C}_{22}(r))^2 \right]\left(1-H_{\epsilon_1}(u_2-\frac{1}{2})\right)\delta_{\epsilon_1}(u_1 - \frac{1}{2}), \\
f_2(u_1,u_2,\mathbf{C}) = &\lambda\sum\limits_{r=1}^\omega \left[(I(r)-\mathbf{C}_{11}(r))^2 - (I(r)-\mathbf{C}_{12}(r))^2 \right]H_{\epsilon_1}(u_1-\frac{1}{2})\delta_{\epsilon_1}(u_2 - \frac{1}{2}) \notag \\
&+ \left[(I(r)-\mathbf{C}_{21}(r))^2 - (I(r)-\mathbf{C}_{22}(r))^2 \right]\left(1-H_{\epsilon_1}(u_1-\frac{1}{2})\right)\delta_{\epsilon_1}(u_2 - \frac{1}{2}).
\end{align*}
The function $w(u) = 2u(2u-1)(u-1)$ is the derivative of $ W(u)$ with respect to $u$.  

Considering the properties of maximum boundedness and energy decay for the classical Allen--Cahn equation, we would next explore the relevant properties for the proposed Allen--Cahn equations \eqref{MAC_1} and \eqref{MAC_2}.   According to the criterion in \cite{Li2021sirev}, we can verify that the proposed Allen--Cahn equations \eqref{MAC_1} and \eqref{MAC_2} satisfy the following maximum bound principle
\begin{align*}
\text{If}  \ 0 \leq \abs{u(x,t = 0)} \leq 1, \     \text{ then} \ 0 \leq \abs{u(x,t)} \leq 1, \ \forall t\in(0,\infty),
\end{align*}
for homogeneous Neumann boundary condition. Besides, the energy decay property for \eqref{eqn:energy_4_phase} holds obviously.
In the following part, we will design efficient numerical schemes preserving the corresponding properties, which are stable and reliable to get the steady state solution. In \cite{Li2019siam,Fu2022}, it is shown that the ETD method can maintain the maximum bound principle and energy stability for the Allen--Cahn equation. As a consequence, we will utilize the ETD method as temporal discretization to construct the numerical schemes for the proposed Allen--Cahn equations \eqref{MAC_1} and \eqref{MAC_2}.

\subsubsection{Exponential time differencing methods}
Using the central finite difference method to discretize \eqref{MAC_1} and \eqref{MAC_2} in space, we obtain the following
ODE system for numerical solution $U_i =[U_{i,\ell}]^{M_1M_2}_{\ell=1}\in \mathbb{R}^{M_1M_2}$,
\begin{align}\label{ODE}
\frac{\mathrm{d}U_i}{\mathrm{dt}}  + {L}_h U_i = {N}(U_i), \ \  i = 1,2,
\end{align}
where
\begin{align*}
L_h = -2\epsilon D_h  + SI_d, \quad N(U_i) = SU_i - \frac{1}{\epsilon}w(U_i) - f_i(u_1,u_2,\mathbf{C}).
\end{align*}
Here, $D_h$ is a 2D discrete Laplacian matrix under the homogeneous Neumann boundary condition,
\begin{align*}
  D_{ h } = \frac{1}{h^2}(I_{M_2}\otimes\Lambda_{M_1}+\Lambda_{M_2}\otimes I_{M_1}),  \ h \text{ is the spatial step},
\end{align*}
where $I_{M_i}$ is an $M_i\times M_i$ identity matrix and
$$
\Lambda_{ M_i } = \left[ \begin{array} { c c c c c c } - 1 & 1 & & & & 0\\ 1 & - 2 & 1 & & & \\ & & \ddots & \ddots & \ddots & \\ & & & 1 & - 2 & 1 \\ 0 & & & & 1 & - 1 \end{array} \right]_{ M_i \times M_i }, \quad i=1, 2.
$$
$I_d\in \mathbb{R}^{M_1M_2 \times M_1M_2}$  is an identity matrix. The positive constant $S$ is called the stabilizer.

Multiplying \eqref{ODE} with an integrating factor $e^{L_ht}$, and integrating from $t^n$ to $t^{n+1}$ give
\begin{align*}
U_i(t^{n+1}) = e^{-L_h\Delta t}U_i(t^n) + e^{-L_h\Delta t}\int_0^{\Delta t}e^{L_hs}N(U_i(t^n+s))ds.
\end{align*}
Then two approximations for the nonlinear term $N(U(t^n+s))$ are proposed.
 \begin{itemize}
\item ETD1 : Approximating $N(U_i(t^n+s))$ by $N(U_i(t^n))$, we have a first-order scheme
\begin{align}\label{ETD1}
U_i^{n+1} = \phi_0(L_h\Delta t) U_i^n +\Delta t \phi_1(L_h\Delta t) N(U_i^n).
\end{align}
\item ETDRK2: Approximating $N(U_i(t^n+s))$ with $\displaystyle(1-\frac{s}{\Delta t})N(U_i(t^n)) + \frac{s}{\Delta t}N(\hat{U}_i^{n+1})$,
wherein $\hat{U}_i^{n+1}$ is a first-order approximation obtained from ETD1. As a consequence, the ETDRK2 scheme is given by
    \begin{equation}\left\{
    \begin{split}
    &\hat{U}_i^{n+1} = \phi_0(L_h\Delta t) U_i^n +\Delta t \phi_1(L_h\Delta t) N(U^n_i),\\
    &{U}_i^{n+1} = \phi_0(L_h\Delta t) \hat{U}_i^{n+1} +\Delta t \phi_2(L_h\Delta t)( N(\hat{U}^{n+1}_i)- N(U^n_i)). \label{ETD2}\\
         \end{split}\right.
     \end{equation}
\end{itemize}
In \eqref{ETD1} and \eqref{ETD2}, $\phi_j,j = 0,1,2$ are defined as
\begin{align*}
\phi_0(a) = e^{-a}, \quad  \phi_1(a) = \frac{1-e^{-a}}{a}, \quad \phi_2 = \frac{e^{-a}-1+a}{a^2}.
\end{align*}
The calculation process of $\phi_i(L_h\Delta t)$ can be simplified by 2D discrete cosine transform (DCT) efficiently.
The time complexity is $\mathcal{O}((M_1M_2)^2log(M_1M_2))$ per time step. For more detailed illustrations,
please refer to \cite{Hoch2010AN,Ju2015JSC}.

We summarize the algorithm described above as follows.
\begin{algorithm}\renewcommand{\arraystretch}{2.0}
	\caption{\leftline{\bf{ADMM-ETD solver}}}
		\label{ACCV}
\begin{algorithmic}[1]
	\REQUIRE{Initial values $\mathbf{u}^0$ and $\mathbf{C}^0$}
	\ENSURE{Convergence result $u^s$}
	\STATE{With $\mathbf{u}^{k}, \mathbf{C}^k$, solve the Allen--Cahn equations by the numerical scheme \eqref{ETD1} (or \eqref{ETD2}) till the steady state $\mathbf{u}^{k+1}$}.
	\STATE{Calculate the new $\mathbf{C}^{k+1}$  by \eqref{eqn:c} with  $\mathbf{u}^{k+1}$}.
	\STATE{Set $\mathbf{u}^{k}=\mathbf{u}^{k+1}$. Repeat Steps 1-2 until the convergence of $\mathbf{u}^{k+1}$.}
\end{algorithmic}
\end{algorithm}

%
%
%
%
%
%
\subsubsection{Discrete maximum bound principle}
 In this part, we characterize the stabilizer $S$ such that the discrete maximum bound principle holds for the proposed ETD scheme \eqref{ETD1} (or \eqref{ETD2}).
We assume the phase variable $u_i$ in the interval $[0,1]$,  and project $u_i \in [0,1]$ to $\tilde{u}_i \in [-1,1]$ by the operator $\mathcal{P}$
\begin{align}\label{eqn:proj}
\tilde{u}_i = \mathcal{P}({u}_i) = 2(u_i - \frac{1}{2}), \ {u}_i = \mathcal{P}^{-1}(\tilde{u}_i) =  \frac{1}{2}\tilde{u}_i + \frac{1}{2}, \ i = 1,2.
\end{align}
Therefore the original image $I$ and the average intensity $\mathbf{C}$ are also rescaled in $ [-1,1]$ and denoted by
$\tilde{I},\tilde{\mathbf{C}}$, correspondingly. Substituted $\tilde{u}_i, \tilde{I},\tilde{\mathbf{C}}$ into the Allen--Cahn equation \eqref{MAC_1} and \eqref{MAC_2},  the equations become
\begin{align}
\frac{\partial \tilde{u}_1}{\partial t} = 2\epsilon\Delta \tilde{u}_1 - \frac{2}{\epsilon}\tilde{w}(\tilde{u}_1) - {f}_1({  \frac{1}{2}\tilde{u}_1,\frac{1}{2}\tilde{u}_2}, \tilde{\mathbf{C}}),\label{MAC_r1}\\
\frac{\partial \tilde{u}_2}{\partial t} = 2\epsilon\Delta \tilde{u}_2 - \frac{2}{\epsilon}\tilde{w}(\tilde{u}_2) - {f}_2({  \frac{1}{2}\tilde{u}_1,\frac{1}{2}\tilde{u}_2}, \tilde{\mathbf{C}}),\label{MAC_r2}
\end{align}
where $\tilde{w}(u) = \frac{1}{2}u(u-1)(u+1)$. 
Then the ODE system \eqref{ODE} turns to
\begin{align}\label{t_ODE}
\frac{\mathrm{d}\tilde{U}_i}{\mathrm{dt}}  + \tilde{L}_h \tilde{U}_i = { \tilde{N}_i(\tilde{U}_i)},
\end{align}
with
{
\begin{align*}
&\tilde{L}_h =  -2\epsilon D_h  + \tilde{S}I_d, \\
&\tilde{N}_1(\tilde{U}_1)  = \tilde{S}\tilde{U}_1 - \frac{2}{\epsilon}\tilde{w}(\tilde{U}_1) - f_1({  \frac{1}{2}\tilde{U}_1,\frac{1}{2}\tilde{U}_2},\mathbf{\tilde{C}}),\\
&\tilde{N}_2(\tilde{U}_2)  = \tilde{S}\tilde{U}_2 - \frac{2}{\epsilon}\tilde{w}(\tilde{U}_2) - f_2({  \frac{1}{2}\tilde{U}_1,\frac{1}{2}\tilde{U}_2},\mathbf{\tilde{C}}).
\end{align*}
}

Following the idea in \cite{Zhang2021_NMTMA}, we turn to the proof of discrete maximum bound principle for the ETD numerical schemes of \eqref{t_ODE}.

{
\begin{lemma}\label{lemma:bound}
There holds the following bound for nonlinear term $\tilde{N}_i$, i = 1,2
\begin{align}
\norm{\tilde{N}_i(\zeta)}_{\infty} \leq \tilde{S}, \ \ -1\leq \zeta \leq 1.
\end{align}
 provided that
 \begin{align*}
 \tilde{S}\geq \gamma \triangleq \frac{2}{\epsilon} + \frac{2\omega\lambda\pi}{\epsilon_1^2}\ \text{and}\ \epsilon_1 = \frac{1}{2p},\ {p}\ \text{is a positive odd number},
\end{align*}
Here, $\omega$ is the number of channel of image $\tilde{I}$.
\end{lemma}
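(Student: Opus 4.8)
The plan is to bound $\tilde N_i$ pixelwise, so it suffices to treat $\zeta$ as a single scalar in $[-1,1]$ (the intensities $\tilde I,\tilde{\mathbf C}_{jk}$ at that pixel also lying in $[-1,1]$, and the other phase variable entering $f_i$ only through factors valued in $[0,1]$). Write $\tilde N_i(\zeta)=g(\zeta)-f_i$, where $g(\zeta)=\tilde S\zeta-\frac{2}{\epsilon}\tilde w(\zeta)$ is an odd cubic and $f_i$ collects the fitting contribution. Using $\tilde w(\zeta)=\frac12\zeta(\zeta-1)(\zeta+1)$, a direct computation gives the factorizations
\begin{align*}
\tilde S-\tilde N_i(\zeta)&=(1-\zeta)\Big[\tilde S-\tfrac{1}{\epsilon}\zeta(\zeta+1)\Big]+f_i,\\
\tilde S+\tilde N_i(\zeta)&=(1+\zeta)\Big[\tilde S-\tfrac{1}{\epsilon}\zeta(\zeta-1)\Big]-f_i,
\end{align*}
and $\norm{\tilde N_i(\zeta)}_\infty\le\tilde S$ is equivalent to both right-hand sides being nonnegative on $[-1,1]$.

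First I would dispose of the polynomial part. On $[-1,1]$ one has $\zeta(\zeta\pm1)\le 2$, so the bracketed factors are $\ge\tilde S-\frac{2}{\epsilon}$, and by $\tilde S\ge\gamma$ this is $\ge\frac{2\omega\lambda\pi}{\epsilon_1^2}\ge 0$; together with $1\mp\zeta\ge 0$ this makes the bracketed terms $\ge(1\mp\zeta)\big(\tilde S-\frac{2}{\epsilon}\big)\ge 0$. Hence at every $\zeta$ where $f_i=0$ both displays are nonnegative, so $|\tilde N_i(\zeta)|\le\tilde S$ there.

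The remaining work is on the set $f_i\neq 0$. The structural fact is that the factor $\delta_{\epsilon_1}$ present in every term of $f_i$ vanishes once its (rescaled) argument leaves $[-\epsilon_1,\epsilon_1]$, which confines the support in $\zeta$ to $\{\,|\zeta|\le 2\epsilon_1\,\}$; this is a proper subinterval of $(-1,1)$ since $\epsilon_1<\frac12$, so on it $1\mp\zeta\ge 1-2\epsilon_1>0$ and the bracketed terms are $\ge(1-2\epsilon_1)\big(\tilde S-\frac{2}{\epsilon}\big)\ge(1-2\epsilon_1)\frac{2\omega\lambda\pi}{\epsilon_1^2}$. Meanwhile, using $0\le H_{\epsilon_1}\le1$, $0\le\delta_{\epsilon_1}\le\frac{1}{\epsilon_1}$, $H_{\epsilon_1}+(1-H_{\epsilon_1})=1$, and $(\tilde I-\tilde{\mathbf C}_{jk})^2\le 4$, each term of $f_i$ is dominated by a convex combination of differences of such squares multiplied by $\delta_{\epsilon_1}$, so $|f_i|\le\frac{c_0\,\omega\lambda}{\epsilon_1}$ for an explicit constant $c_0$. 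One then checks $(1-2\epsilon_1)\frac{2\omega\lambda\pi}{\epsilon_1^2}\ge\frac{c_0\omega\lambda}{\epsilon_1}$, i.e. $\epsilon_1\le\frac{2\pi}{c_0+4\pi}$, which holds because $\epsilon_1=\frac{1}{2p}$ with $p$ a positive odd number together with $\epsilon_1<\frac12$ force $p\ge3$, hence $\epsilon_1\le\frac16$. Combining the two cases yields $-\tilde S\le\tilde N_i(\zeta)\le\tilde S$.

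I expect the main obstacle to be exactly the bookkeeping in this last step. A crude triangle estimate $\norm{\tilde N_i}_\infty\le\norm{g}_\infty+\norm{f_i}_\infty$ is useless because $\norm{g}_\infty=\tilde S$ already saturates the target; one must exploit that $g$ only approaches $\pm\tilde S$ near $\zeta=\pm1$, precisely where $f_i$ is switched off, and the factored identities above are the clean way to make this localization quantitative. A secondary nuisance is tracking the constant $c_0$ coming from the normalization of the fitting term in the rescaled equation and from the squared-intensity differences, and verifying that the threshold $\gamma$ really does leave room — which is why $\gamma$ carries the seemingly generous extra factor of order $\pi/\epsilon_1$ beyond the size of $f_i$.
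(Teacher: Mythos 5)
Your proof is correct, but it takes a genuinely different route from the paper's. The paper argues through the first derivative: it computes $\tilde{N}_i'(\zeta)=\tilde{S}-\frac{2}{\epsilon}\tilde{w}'(\zeta)-\frac{1}{2}f_i'$, bounds $\tilde{w}'(\zeta)\le 1$ on $[-1,1]$ and $\abs{f_i'}\le \frac{4\omega\lambda\pi}{\epsilon_1^2}$ via $\abs{\delta_{\epsilon_1}'}\le\frac{\pi}{2\epsilon_1^2}$, concludes that $\tilde{N}_i'\ge \tilde{S}-\gamma\ge 0$ so that $\tilde{N}_i$ is monotone increasing, and then evaluates the endpoints, where $\tilde{w}(\pm 1)=0$ and $\delta_{\epsilon_1}(\pm\frac12)=0$ give $\tilde{N}_i(\pm1)=\pm\tilde{S}$ exactly. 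You avoid the derivative entirely: your factorizations of $\tilde{S}\mp\tilde{N}_i(\zeta)$ are correct, the zeroth-order bound $\abs{f_i}\le\frac{4\omega\lambda}{\epsilon_1}$ on its support $\{\abs{\zeta}\le 2\epsilon_1\}$ checks out (each squared-intensity difference is at most $4$, the $H_{\epsilon_1}$ and $1-H_{\epsilon_1}$ terms form a convex combination, and $\delta_{\epsilon_1}\le\frac{1}{\epsilon_1}$), and the final numerical comparison $(1-2\epsilon_1)\frac{2\pi}{\epsilon_1}\ge 4$ indeed holds since $\epsilon_1<\frac12$ with $p$ odd forces $p\ge 3$, i.e.\ $\epsilon_1\le\frac16$. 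The trade-off: the paper's derivative argument explains where the $\frac{2\omega\lambda\pi}{\epsilon_1^2}$ term in $\gamma$ actually comes from (it is exactly what is needed to absorb $\delta_{\epsilon_1}'$) and delivers the sharp endpoint values $\pm\tilde{S}$ with no slack; your argument treats $\gamma$ merely as a sufficient numerical threshold and reveals that, for this particular conclusion, it is generous by a factor of order $\pi/\epsilon_1$ — at the price of the case split and bookkeeping on the support of $\delta_{\epsilon_1}$. Both proofs rest on the same structural fact, namely that the fitting term is switched off near $\zeta=\pm1$ where the cubic part saturates $\pm\tilde{S}$; you make this localization explicit, while the paper encodes it in the monotonicity-plus-endpoints argument.
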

\begin{proof}
To estimate the boundedness of nonlinear term  $\tilde{N}_1$,
\begin{align*}
\tilde{N}_1(\zeta)  = \tilde{S}\zeta - \frac{2}{\epsilon}\tilde{w}(\zeta) - f_1({ \frac{1}{2}\zeta,\frac{1}{2}\tilde{U}_2},\mathbf{\tilde{C}}),
\end{align*}
 the first-order derivative is considered as
\begin{align}\label{eqn:derivative}
\tilde{N}_1'(\zeta) = \tilde{S} -\frac{2}{\epsilon}\tilde{w}'(\zeta) - \frac{1}{2}{f}_1'(\frac{1}{2}\zeta,\frac{1}{2}\tilde{U}_2,\mathbf{\tilde{C}}).
\end{align}
Simplifying the second term, we have
$\tilde{w}'(\zeta) = \frac{3}{2}\zeta^2 - \frac{1}{2}$, and the boundedness for the above second term in $[-1,1]$ is deduced as
\begin{align}\label{eqn:2}
 -\frac{1}{2}\leq \tilde{w}'(\zeta)\leq 1.
\end{align}
Then we rewrite the third term
\begin{align*}
{f}_1'(\frac{1}{2}\zeta,\frac{1}{2}\tilde{U}_2,\mathbf{\tilde{C}}) &= \lambda\sum\limits_{r=1}^\omega \left[(\tilde{I}(r)-\mathbf{\tilde{C}}_{11}(r))^2 -(\tilde{I}(r)-\mathbf{\tilde{C}}_{21}(r))^2 \right]H_{\epsilon_1}(\frac{1}{2}\tilde{U}_2)\delta'_{\epsilon_1}(\frac{1}{2}\zeta) \\
&+ \left[(\tilde{I}(r)-\mathbf{\tilde{C}}_{12}(r))^2 - (\tilde{I}(r)-\mathbf{\tilde{C}}_{22}(r))^2 \right]\left(1-H_{\epsilon_1}(\frac{1}{2}\tilde{U}_2)\right)\delta'_{\epsilon_1}(\frac{1}{2}\zeta),
\end{align*}
Since the original image $\tilde{I}$ and intensity averages $\mathbf{\tilde{C}}$ is in interval $[-1,1]$, it holds
$$\abs{(\tilde{I}(r)-\mathbf{\tilde{C}}_{11}(r))^2 -(\tilde{I}(r)-\mathbf{\tilde{C}}_{21}(r))^2 }\leq 4.$$
Considering the fact that $\displaystyle H_{\epsilon_1}(\frac{1}{2}\tilde{U}_2)$ lies in the interval $[0,1]$,  we have
\begin{align}\label{eqn:3}
\abs{{f}_1'(\frac{1}{2}\zeta,\frac{1}{2}\tilde{U}_2,\mathbf{\tilde{C}})} \leq \lambda\sum\limits_{r=1}^\omega 8\cdot\frac{\pi}{2\epsilon_1^2}
=\frac{4\omega\lambda\pi}{\epsilon_1^2}.
\end{align}
Substituting \eqref{eqn:2} and \eqref{eqn:3} into \eqref{eqn:derivative}, it follows that
\begin{align*}
\tilde{N}'_1(\zeta) \geq \tilde{S} - \frac{2}{\epsilon} - \frac{2\omega\lambda\pi}{\epsilon_1^2}, \ \zeta \in [-1,1].
\end{align*}
If  $\tilde{S} \geq \gamma \triangleq \frac{2}{\epsilon} + \frac{2\omega\lambda\pi}{\epsilon_1^2}$, we have
$\tilde{N}'_1(\zeta) \geq 0, \zeta \in [-1,1]$, which shows that $\tilde{N}_1$ is a monotonically increasing function.
Provided that
\begin{align}
\epsilon_1 = \frac{1}{2p},\  p\ \text{is an positive odd number},
\end{align}
 the following equalities
 $$\tilde{N}_1(-1) = -\tilde{S}, \ \tilde{N}_1(1)= \tilde{S},$$
can be easily derived.
Considering the monotonicity of $\tilde{N}_1$, we can derive $\norm{\tilde{N}_1(\zeta)}_{\infty} \leq \tilde{S}$. The proof for $\tilde{N}_2$ follows the same line, we just omit it.
\end{proof}
}

With the estimate in Lemma \ref{lemma:bound}, we can deduce the discrete maximum bound
principle for ETD1 and ETDRK2 schemes of \eqref{t_ODE} under $\tilde{S}\geq\gamma$, referring to Theorems 3.4 and 3.5 in \cite{Li2019siam}.
That is, the numerical solution $\tilde{U_i}^k$ satisfies $\norm{\tilde{U}_i^k}_{\infty} \leq 1 $ if the initial
value have $\norm{\tilde{U}_i^0}_{\infty} \leq 1$. $\tilde{U}_i$ and $U_i$ also satisfy the projection relations in \eqref{eqn:proj}.
Thus, the numerical solution of the ETD scheme \eqref{ETD1} (or \eqref{ETD2}) for equations \eqref{ODE} preserves the discrete maximum bound principle,
as shown in the following theorem.

 \begin{theorem}\label{thm1}
 If the initial value satisfies $0\leq \mathbf{u}^0\leq 1$, then the numerical solution $\mathbf{U}^k$ of the ETD scheme \eqref{ETD1} (or
 \eqref{ETD2}) for \eqref{ODE} is also in the interval $[0,1]$, for all $k>0$, provided that the stabilizer $S$ satisfies
\begin{align}\label{bound_L}
S\geq\gamma.
\end{align}
\end{theorem}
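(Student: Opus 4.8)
The plan is to reduce the claim to the maximum bound principle for the rescaled ODE system \eqref{t_ODE} in the projected variable $\tilde U_i\in[-1,1]$, and then combine Lemma \ref{lemma:bound} with the ETD maximum-principle machinery of \cite{Li2019siam}. Since the projection $\mathcal P$ in \eqref{eqn:proj} is affine and invertible, the hypothesis $0\le\mathbf u^0\le 1$ is equivalent to $\|\tilde U_i^0\|_\infty\le 1$, and it suffices to prove $\|\tilde U_i^k\|_\infty\le 1$ for all $k$; the conclusion $0\le\mathbf U^k\le 1$ then follows from $U_i^k=\tfrac12\tilde U_i^k+\tfrac12$. The stabilizer is unchanged by this affine substitution (one sets $\tilde S=S$), so the threshold $\gamma$ appearing in Lemma \ref{lemma:bound} is exactly the one appearing in \eqref{bound_L}.

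First I would check that the linear operator $\tilde L_h=-2\epsilon D_h+\tilde S I_d$ fits the abstract framework of \cite{Li2019siam}. Each block $\Lambda_{M_i}$ has vanishing row sums ($-1+1=0$ in the boundary rows, $1-2+1=0$ in the interior rows), so $D_h\mathbf 1=0$; moreover $2\epsilon D_h$ has nonnegative off-diagonal entries, i.e.\ it is a Metzler matrix, so $e^{2\epsilon D_h t}$ is entrywise nonnegative with unit row sums, whence $\|e^{2\epsilon D_h t}\|_\infty=1$. Consequently $e^{-\tilde L_h t}=e^{-\tilde S t}e^{2\epsilon D_h t}$ satisfies $\|e^{-\tilde L_h t}\|_\infty=e^{-\tilde S t}$, and the associated $\phi$-functions obey the operator bounds
\begin{align*}
\|\phi_0(\tilde L_h\Delta t)\|_\infty\le e^{-\tilde S\Delta t},\qquad
\Delta t\,\|\phi_1(\tilde L_h\Delta t)\|_\infty\le\frac{1-e^{-\tilde S\Delta t}}{\tilde S},
\end{align*}
together with the analogous bound for $\phi_2$. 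These are precisely the hypotheses under which Theorems 3.4 and 3.5 of \cite{Li2019siam} apply, the remaining one being $\|\tilde N_i(\zeta)\|_\infty\le\tilde S$ on $[-1,1]$, which is exactly Lemma \ref{lemma:bound} under $\tilde S\ge\gamma$.

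The conclusion then follows by induction on $k$. For ETD1 \eqref{ETD1}, if $\|\tilde U_i^n\|_\infty\le 1$ then
\begin{align*}
\|\tilde U_i^{n+1}\|_\infty
\le\|\phi_0(\tilde L_h\Delta t)\|_\infty\,\|\tilde U_i^n\|_\infty
+\Delta t\,\|\phi_1(\tilde L_h\Delta t)\|_\infty\,\|\tilde N_i(\tilde U_i^n)\|_\infty
\le e^{-\tilde S\Delta t}+\bigl(1-e^{-\tilde S\Delta t}\bigr)=1,
\end{align*}
using Lemma \ref{lemma:bound} in the final step. For ETDRK2 \eqref{ETD2}, the intermediate stage $\hat U_i^{n+1}$ lies in $[-1,1]$ by the same ETD1 estimate, and the corrector stage is then bounded in $\ell^\infty$ by $1$ after the $\phi_0,\phi_1,\phi_2$ bookkeeping carried out in \cite{Li2019siam}. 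Undoing the projection via \eqref{eqn:proj} yields $0\le\mathbf U^k\le 1$ for all $k>0$.

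I expect the only delicate points to be (i) matching the discrete Neumann Laplacian $D_h$ to the abstract hypotheses of \cite{Li2019siam}, i.e.\ verifying the Metzler/row-stochastic structure so that $e^{-\tilde L_h t}$ is an $\ell^\infty$-contraction and the $\phi_j$ estimates hold, and (ii) tracking the affine rescaling so that the stabilizer thresholds in Lemma \ref{lemma:bound} and in \eqref{bound_L} coincide and $0\le\mathbf u^0\le1\iff\|\tilde U_i^0\|_\infty\le1$. The genuinely quantitative work — the bound $\|\tilde N_i\|_\infty\le\tilde S$, which hinges on choosing $\epsilon_1=1/(2p)$ with $p$ odd so that $\tilde N_i(\pm1)=\pm\tilde S$ — has already been discharged in Lemma \ref{lemma:bound}, so what remains is essentially an invocation of known ETD maximum-principle results.
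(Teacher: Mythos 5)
Your proposal follows essentially the same route as the paper: rescale via \eqref{eqn:proj} to the system \eqref{t_ODE}, apply Lemma \ref{lemma:bound} to bound $\|\tilde N_i\|_\infty$ by $\tilde S$, invoke Theorems 3.4 and 3.5 of \cite{Li2019siam} for the discrete maximum bound principle of ETD1/ETDRK2, and undo the projection. The paper in fact gives only this citation-level argument, so your additional verification of the Metzler/zero-row-sum structure of $D_h$ and the explicit $\ell^\infty$ induction for ETD1 is a correct and slightly more complete rendering of the same proof.
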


Theorem \ref{thm1} demonstrates that for fixed $\mathbf{C}$, each $U_i^k$ can be kept in $[0,1]$, which causes
the interval which $\mathbf{C}$ belong to is invariant, after updating $\mathbf{C}$ in Step 2 of Algorithm \ref{ACCV}.
It guarantees that the discrete maximum bound principle always holds for the whole iteration.

\subsubsection{Discrete energy stability}
For given $\mathbf{C}$, the Allen--Cahn equations \eqref{MAC_1} and \eqref{MAC_2} are energy stable based on \eqref{eqn:energy_4_phase}, {viz.,}
\begin{align}
\frac{\mathrm{d}E}{\mathrm{d}t} \leq 0.
\end{align}
Subsequently, we will prove that the ETD scheme \eqref{ETD1} or
 \eqref{ETD2} for \eqref{ODE} has discrete energy stability in the sense that the following discrete energy is non-increase in time:
\begin{align}\label{eqn:discrete_E}
E_h(\mathbf{U},\mathbf{C})=\sum_{\ell=1}^{M_1M_2} \left(\sum_{i=1}^{2} (\frac{1}{\epsilon}W(U_{i,\ell}))+F_{\epsilon_1}(U_{1,\ell},
U_{2,\ell},I)\right)- \sum_{i=1}^{2} {\epsilon}U_{i}^TD_hU_{i}. 
\end{align}

\begin{lemma}\label{lemma_1}
For the given intensity average $\mathbf{C}$, the unconditional energy stability of the ETD scheme  \eqref{ETD1} (or  \eqref{ETD2}) for \eqref{ODE}
holds in the following form provided that ${S}\geq \frac{\gamma}{2}$
{
\begin{align*}
 E_h(\mathbf{U}^{k+1},\mathbf{C})\leq E_h(\mathbf{U}^k,\mathbf{C}).
\end{align*}
}
\end{lemma}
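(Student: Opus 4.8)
The plan is to read the semi-discrete system \eqref{ODE} (equivalently \eqref{t_ODE}) as a gradient flow for $E_h$ and then to run the by-now-standard exponential-integrator energy estimate of \cite{Li2019siam,Fu2022,Zhang2021_NMTMA}, keeping careful track of where the stabilizer $S$ enters. I would first record the gradient structure: writing $E_h(\mathbf U,\mathbf C)=\Phi(\mathbf U)+Q(\mathbf U)$ with nonlinear part $\Phi(\mathbf U)=\sum_{\ell}\bigl(\sum_{i=1}^{2}\frac1\epsilon W(U_{i,\ell})+F_{\epsilon_1}(U_{1,\ell},U_{2,\ell},I)\bigr)$ and quadratic part $Q(\mathbf U)=-\sum_{i=1}^{2}\epsilon\,U_i^TD_hU_i$, and using $w=W'$ together with the identity $f_i=\partial_{u_i}F_{\epsilon_1}$, a direct differentiation gives $\nabla_{U_i}E_h=-2\epsilon D_hU_i+\frac1\epsilon w(U_i)+f_i=L_hU_i-N(U_i)$, so \eqref{ODE} reads $\dot U_i=-\nabla_{U_i}E_h$. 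Since $D_h\preceq 0$ we have $L_h\succeq S\,I_d\succ 0$, hence $\phi_1(\Delta t L_h)$ is symmetric positive definite and $\phi_0(\Delta t L_h)=I_d-\Delta t\,L_h\phi_1(\Delta t L_h)$; consequently the ETD1 step \eqref{ETD1} is, componentwise, the preconditioned gradient step $U_i^{n+1}=U_i^n-\Delta t\,\phi_1(\Delta t L_h)\nabla_{U_i}E_h(\mathbf U^n)$.

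Next I would expand the energy increment. Put $\eta_i:=U_i^{n+1}-U_i^n$. The quadratic part contributes exactly $\langle\nabla_{U_i}Q(\mathbf U^n),\eta_i\rangle+\epsilon\,\eta_i^T(-D_h)\eta_i$ per component, while the nonlinear part contributes $\sum_i\langle\nabla_{U_i}\Phi(\mathbf U^n),\eta_i\rangle+R$, where $R=\Phi(\mathbf U^{n+1})-\Phi(\mathbf U^n)-\sum_i\langle\nabla_{U_i}\Phi(\mathbf U^n),\eta_i\rangle$ is the second-order Taylor remainder, so that
\[
E_h(\mathbf U^{n+1},\mathbf C)-E_h(\mathbf U^n,\mathbf C)=\sum_{i=1}^{2}\Bigl(\langle\nabla_{U_i}E_h(\mathbf U^n),\eta_i\rangle+\epsilon\,\eta_i^T(-D_h)\eta_i\Bigr)+R.
\]
For the bracketed sum I would diagonalise $L_h$: from $\eta_i=-\Delta t\,\phi_1(\Delta t L_h)\nabla_{U_i}E_h(\mathbf U^n)$ one gets $\langle\nabla_{U_i}E_h(\mathbf U^n),\eta_i\rangle=-\frac1{\Delta t}\langle\phi_1(\Delta t L_h)^{-1}\eta_i,\eta_i\rangle$, and from $\epsilon(-D_h)=\frac12(L_h-S\,I_d)$ the bracket equals $\sum_{\mu}\bigl(-\frac{\mu}{1-e^{-\Delta t\mu}}+\frac{\mu-S}{2}\bigr)\,|\widehat{\eta}_{i,\mu}|^2$ over the eigenvalues $\mu\ge S$ of $L_h$. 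Because $\frac1{1-e^{-a}}>1$ for all $a>0$, each coefficient is at most $-\mu+\frac{\mu-S}{2}=-\frac\mu2-\frac S2\le-S$, hence the bracketed sum over $i$ is $\le-S\sum_i\norm{\eta_i}_2^2$.

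It remains to bound $R$, and this is the step that really uses the earlier results and that I expect to be the main obstacle. Since $W$ is quartic, its Hessian is unbounded, so one must first invoke the discrete maximum bound principle (Theorem \ref{thm1}, which holds since $S\ge\gamma\ge\frac\gamma2$) to guarantee $\mathbf U^n,\mathbf U^{n+1}\in[0,1]^{2M_1M_2}$, and therefore that the whole segment joining them lies in this box. On the box, $\Phi$ is block-diagonal over pixels, and the derivative estimates underlying the proof of Lemma \ref{lemma:bound} — namely $\frac1\epsilon|W''|\le\frac2\epsilon$ on $[0,1]$, together with the uniform bounds on $\delta_{\epsilon_1}$ and $\delta_{\epsilon_1}'$ that control the first and second $u_i$-derivatives and the mixed $u_1u_2$-derivative of $F_{\epsilon_1}$ — give, via a Gershgorin bound on the $2\times2$ diagonal blocks, $\nabla^2\Phi\preceq\gamma\,I_d$ there; the integral form of Taylor's theorem then yields $R\le\frac\gamma2\sum_i\norm{\eta_i}_2^2$. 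Combining, $E_h(\mathbf U^{n+1},\mathbf C)-E_h(\mathbf U^n,\mathbf C)\le(\frac\gamma2-S)\sum_i\norm{\eta_i}_2^2\le0$ whenever $S\ge\frac\gamma2$, which proves the claim for \eqref{ETD1}. The difficulty is localised here: it is exactly the check that the $H_{\epsilon_1}$-regularised Chan--Vese data term does not destroy the $\gamma$-semiconvexity of the potential on the invariant box, so that the plain double-well argument still goes through.

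For the two-stage scheme \eqref{ETD2} I would proceed analogously, following \cite{Fu2022,Li2019siam}: the intermediate value $\hat{\mathbf U}^{n+1}$ is itself an ETD1 step, $\mathbf U^{n+1}$ rewrites as $\mathbf U^n$ minus a symmetric-positive-definite-preconditioned combination of $\nabla E_h(\mathbf U^n)$ and $\nabla E_h(\hat{\mathbf U}^{n+1})$, the additional $\phi_2(\Delta t L_h)$-terms are absorbed by the same scalar inequality $\frac1{1-e^{-a}}>1$ under $S\ge\frac\gamma2$, and $R$ is handled as above since Theorem \ref{thm1} also keeps $\hat{\mathbf U}^{n+1}$ in $[0,1]^{2M_1M_2}$; I would only indicate this reduction and refer to \cite{Fu2022,Li2019siam} for the two-stage bookkeeping.
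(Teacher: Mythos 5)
Your proposal follows exactly the route the paper has in mind: the paper does not write out a proof of Lemma \ref{lemma_1} at all, but defers to \cite{Li2019siam} for ETD1 and \cite{Fu2022} for ETDRK2 and states that ``the vital step lies in the boundedness of $\norm{F''_{\epsilon_1}}$,'' obtained from the discrete maximum bound principle together with the estimates on $H_{\epsilon_1}$ and $\delta_{\epsilon_1}$ --- which is precisely the point at which you localise the difficulty. Your gradient-flow reformulation, the eigenvalue computation giving the $-S\sum_i\norm{\eta_i}_2^2$ bound, and the Taylor-remainder estimate $R\le\frac{\gamma}{2}\sum_i\norm{\eta_i}_2^2$ are a faithful expansion of that cited argument, so in substance you have supplied the proof the paper omits.

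One genuine logical slip should be flagged. You justify applying Theorem \ref{thm1} by writing that it ``holds since $S\ge\gamma\ge\frac{\gamma}{2}$,'' but the hypothesis of Lemma \ref{lemma_1} is only $S\ge\frac{\gamma}{2}$, which does \emph{not} imply $S\ge\gamma$; the implication runs the wrong way. Under the lemma's stated hypothesis alone, the maximum bound principle --- and hence the confinement of $\mathbf U^n$, $\hat{\mathbf U}^{n+1}$, $\mathbf U^{n+1}$ to the box $[0,1]^{2M_1M_2}$ on which your Hessian bound $\nabla^2\Phi\preceq\gamma I_d$ is valid --- is not available. You should either add $S\ge\gamma$ as a standing assumption (it is the regime in which the scheme is actually run, cf.\ Theorem \ref{thm1}) or argue boundedness of the iterates by another means; to be fair, the paper's own one-sentence sketch has the same unacknowledged dependence on $S\ge\gamma$. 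A second, minor point: the constant $\gamma$ is derived in Lemma \ref{lemma:bound} in the rescaled variables $\tilde u_i\in[-1,1]$, so your Gershgorin bound on $\nabla^2\Phi$ in the unscaled variables needs a short verification that the same $\gamma$ (and not a rescaled constant) dominates the $2\times 2$ pixel blocks; the inequality is plausible but is asserted rather than checked.
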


The proof for the ETD1 scheme \eqref{ETD1} can be obtained with a similar argument in \cite{Li2019siam}. For the ETDRK2 scheme \eqref{ETD2},
the proof in \cite{Li2019siam} is not optimal, and we can adopt a similar proof in \cite{Fu2022} to get the unconditional energy stability
in Lemma \ref{lemma_1}. In both proofs, the vital step lies in the boundedness of $\norm{F''_{\epsilon_1}}$ in \eqref{eqn:discrete_E}, which
can be proved by the boundedness of numerical solutions guaranteed by Theorem \ref{thm1} and estimates on the regularized
Heaviside function $H_{\epsilon_1}(u)$ and the regularized Dirac function $\delta_{\epsilon_1}(u)$. The proofs are quite similar to those in
\cite{Li2019siam} and \cite{Fu2022}. So we omit the proof of Lemma \ref{lemma_1}.

\begin{theorem}\label{thm2}
The iterative process \eqref{eqn:iter1}-\eqref{eqn:iter3} holds the unconditional energy stability under the condition ${S}\geq \frac{\gamma}{2}$,
{viz.,}
\begin{align}\label{energy_stable}
 E_h(\mathbf{U}^{k+1},\mathbf{C}^{k+1}) \leq E_h(\mathbf{U}^{k},\mathbf{C}^{k}).
\end{align}
\end{theorem}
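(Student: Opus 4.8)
The plan is to combine the two one-step monotonicity results already available: the energy stability of the Allen--Cahn solver for fixed $\mathbf{C}$ (Lemma \ref{lemma_1}), and the fact that updating $\mathbf{C}$ via the variational formulas \eqref{eqn:c} is itself an exact minimization of $E_h$ in the $\mathbf{C}$ variable. Since the ADMM iteration \eqref{eqn:iter1}--\eqref{eqn:iter3} alternates exactly these two sub-steps, the discrete energy cannot increase across a full cycle.

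\textbf{Step 1 (the $\mathbf{u}$-update decreases $E_h$).} In Step 1 of Algorithm \ref{ACCV}, with $\mathbf{C}^k$ held fixed, we run the ETD scheme \eqref{ETD1} (or \eqref{ETD2}) for the discretized Allen--Cahn equations \eqref{ODE} from $\mathbf{U}^k$ to steady state $\mathbf{U}^{k+1}$. By Lemma \ref{lemma_1}, under $S\geq\gamma/2$ each ETD time step satisfies $E_h(\mathbf{U}^{n+1},\mathbf{C}^k)\leq E_h(\mathbf{U}^n,\mathbf{C}^k)$; chaining these inequalities over all the inner time steps yields
\begin{align*}
E_h(\mathbf{U}^{k+1},\mathbf{C}^k)\leq E_h(\mathbf{U}^k,\mathbf{C}^k).
\end{align*}
Here I also invoke Theorem \ref{thm1} to guarantee that $\mathbf{U}^{k+1}\in[0,1]$ so that the discrete energy $E_h$ in \eqref{eqn:discrete_E} is well behaved (in particular $F_{\epsilon_1}$ and its derivatives stay bounded along the iteration, which is what makes Lemma \ref{lemma_1} applicable).

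\textbf{Step 2 (the $\mathbf{C}$-update decreases $E_h$).} In Step 2, with $\mathbf{U}^{k+1}$ fixed, $\mathbf{C}^{k+1}$ is computed from \eqref{eqn:c}. The key observation is that, for fixed $\mathbf{U}^{k+1}$, the map $\mathbf{C}\mapsto E_h(\mathbf{U}^{k+1},\mathbf{C})$ is a sum over pixels and channels of terms of the form $(\tilde I(r)-\mathbf{C}_{\alpha\beta}(r))^2$ times a nonnegative weight built from $H_{\epsilon_1}$; hence it is a (separable, convex) quadratic in each component of $\mathbf{C}$, and the formulas \eqref{eqn:c} are exactly the stationary points, i.e. the global minimizers, of this quadratic. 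Consequently $E_h(\mathbf{U}^{k+1},\mathbf{C}^{k+1})\leq E_h(\mathbf{U}^{k+1},\mathbf{C})$ for every admissible $\mathbf{C}$, and in particular for $\mathbf{C}=\mathbf{C}^k$:
\begin{align*}
E_h(\mathbf{U}^{k+1},\mathbf{C}^{k+1})\leq E_h(\mathbf{U}^{k+1},\mathbf{C}^k).
\end{align*}
One should note that $F_{\epsilon_1}$ appearing in the continuous derivation \eqref{eqn:fitterm4}--\eqref{eqn:c} and the discrete $F_{\epsilon_1}$ in \eqref{eqn:discrete_E} agree pointwise in the pixelwise sum, so the variational computation carries over verbatim to the discrete functional; a minor caveat is that the denominators in \eqref{eqn:c} must be nonzero, which holds as long as each phase is nonempty. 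Combining the two displayed inequalities via transitivity gives \eqref{energy_stable}.

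\textbf{Main obstacle.} The genuinely delicate point is not the present theorem but the already-cited Lemma \ref{lemma_1}: establishing one-step energy decay for ETDRK2 requires a sharp bound on $\|F_{\epsilon_1}''\|$ together with the stabilizer condition $S\geq\gamma/2$, and it is exactly the maximum bound principle of Theorem \ref{thm1} (hence the stronger requirement $S\geq\gamma$ for the MBP versus $S\geq\gamma/2$ for energy stability) that keeps the arguments of $W$, $H_{\epsilon_1}$, $\delta_{\epsilon_1}$ in the ranges where these bounds are valid. Granting Lemma \ref{lemma_1}, the only thing to be careful about in the proof of Theorem \ref{thm2} is bookkeeping: verifying that the ``steady state'' reached in Step 1 is the terminal iterate of a chain of energy-decreasing ETD steps (so that the inner-loop decrease is legitimate), and that Step 2 is an exact, not merely approximate, minimization in $\mathbf{C}$. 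Both are straightforward once the sub-step structure is made explicit, so the proof is essentially a two-line composition of Lemma \ref{lemma_1} and the optimality of \eqref{eqn:c}.
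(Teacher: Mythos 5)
Your proposal is correct and follows essentially the same route as the paper: the first inequality comes from Lemma \ref{lemma_1} applied to the $\mathbf{u}$-update with $\mathbf{C}^k$ fixed, the second from the fact that \eqref{eqn:iter3} (realized by the closed-form formulas \eqref{eqn:c}) exactly minimizes $E_h$ over $\mathbf{C}$ for fixed $\mathbf{U}^{k+1}$, and transitivity gives \eqref{energy_stable}. Your extra remarks on chaining the inner ETD steps, the convex-quadratic structure of the $\mathbf{C}$-minimization, and the nonvanishing denominators in \eqref{eqn:c} are correct elaborations of details the paper leaves implicit.
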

{
\begin{proof}
For the first stage  \eqref{eqn:iter1}, Lemma \ref{lemma_1} derives
\begin{align*}
E_h(\mathbf{U}^{k+1},\mathbf{C}^{k})\leq E_h(\mathbf{U}^k,\mathbf{C}^{k}).
\end{align*}
And then the second stage \eqref{eqn:iter3} implies
\begin{align*}
E_h(\mathbf{U}^{k+1},\mathbf{C}^{k+1})\leq E_h(\mathbf{U}^{k+1},\mathbf{C}^{k}).
\end{align*}
Combining above two inequalities, we can get \eqref{energy_stable}.
\end{proof}

\begin{remark}
The maximum bound principle in Theorem \ref{thm1} and unconditional energy stability in Theorem \ref{thm2} are both based on two phase variables, four-phase segmentation. For the case of more than two phase variables, Theorems \ref{thm1} and \ref{thm2} can be derived without any special treatments; only $\gamma$ in Lemma \ref{lemma:bound} needs to be re-evaluated.
\end{remark}
}

\section{Numerical experiments}\label{sec:numerical}
Through a variety of experiments, this section exhibits the capability
and efficiency of the proposed ACCV model and the developed ADMM-ETD solver in the multi-phase image segmentation.
The discrete maximum bound of the solution and energy stability of ETD1 and ETDRK2 schemes are presented,
which are consistent with the theoretical results in
Theorems \ref {thm1} and \ref{thm2}. The comparisons between the { state-of-the-art} segmentation methods and our proposed
method are stated to display the effectiveness of our ACCV model for the multi-phase segmentation.
All experiments in this part were performed on a Windows laptop system with an Intel Core i5 processor,
2.11-GHz CPU and 16GB RAM, and were programmed in MATLAB R2019a.


\subsection{Comparison between the ETD1 and ETDRK2 schemes }
This subsection is devoted to testing a synthetic grayscale image whose size is $[1024,1024]$. Without affecting the final segmentation results, we rescale this image into $[300,300]$ to speed up the process of segmentation. The edge detection by the Multi-IGLIM gives the initial contours of $U^0_1,U^0_2$ in the first row of Figure \ref{fig:circle1024}. The second row shows the final contour results at interfaces and the color segmentation results, respectively. Wherein the contour result uses the original image as the background, and then we plot the interface of $U_1$ and $U_2$ by green and red to distinguish. The color segmentation result is obtained by marking the disjoint regions $\{ U_1 > \frac{1}{2},U_2 > \frac{1}{2} \},  \{ U_1 \leq \frac{1}{2},U_2 > \frac{1}{2} \},\{ U_1 > \frac{1}{2},U_2 \leq \frac{1}{2} \},\{ U_1 \leq \frac{1}{2},U_2 \leq \frac{1}{2} \}$ by different colors. More specifically, the green curve recognizes the ``circle" and ``heart", and the red one identifies the ``heart" and ``arch" in contour results (c).  It assumed that $U_1>\frac{1}{2}$ holds inside the green curve, otherwise $U_1 \leq \frac{1}{2}$, similar case for $U_2$ and the red one.

Table \ref{tab:CPU_time_circle1024} contains the comparison for ETD1 and ETDRK2 schemes, from which we verify that the second-order scheme is more efficient. 
As a result, we will adopt the ETDRK2 to do the multi-phase segmentation in what follows. In view of stability, the discrete maximum bound principle holds for variables $U_1$ and $U_2$. The energy evolution of the two temporal discretizations sketched in Figure \ref{fig:circle1024_energy} reveals the energy stability. The relevant parameters are given by $\epsilon = 4.0, \lambda = 40, h = 1$. 

\begin{figure}[!t]
\setlength{\fboxrule}{0.3pt}
  \setlength{\fboxsep}{0.3pt}
\centering
		\subfigure[The initial value $U^0_1$] {
			\fbox{\includegraphics[width=0.3\columnwidth,width=0.3\columnwidth]{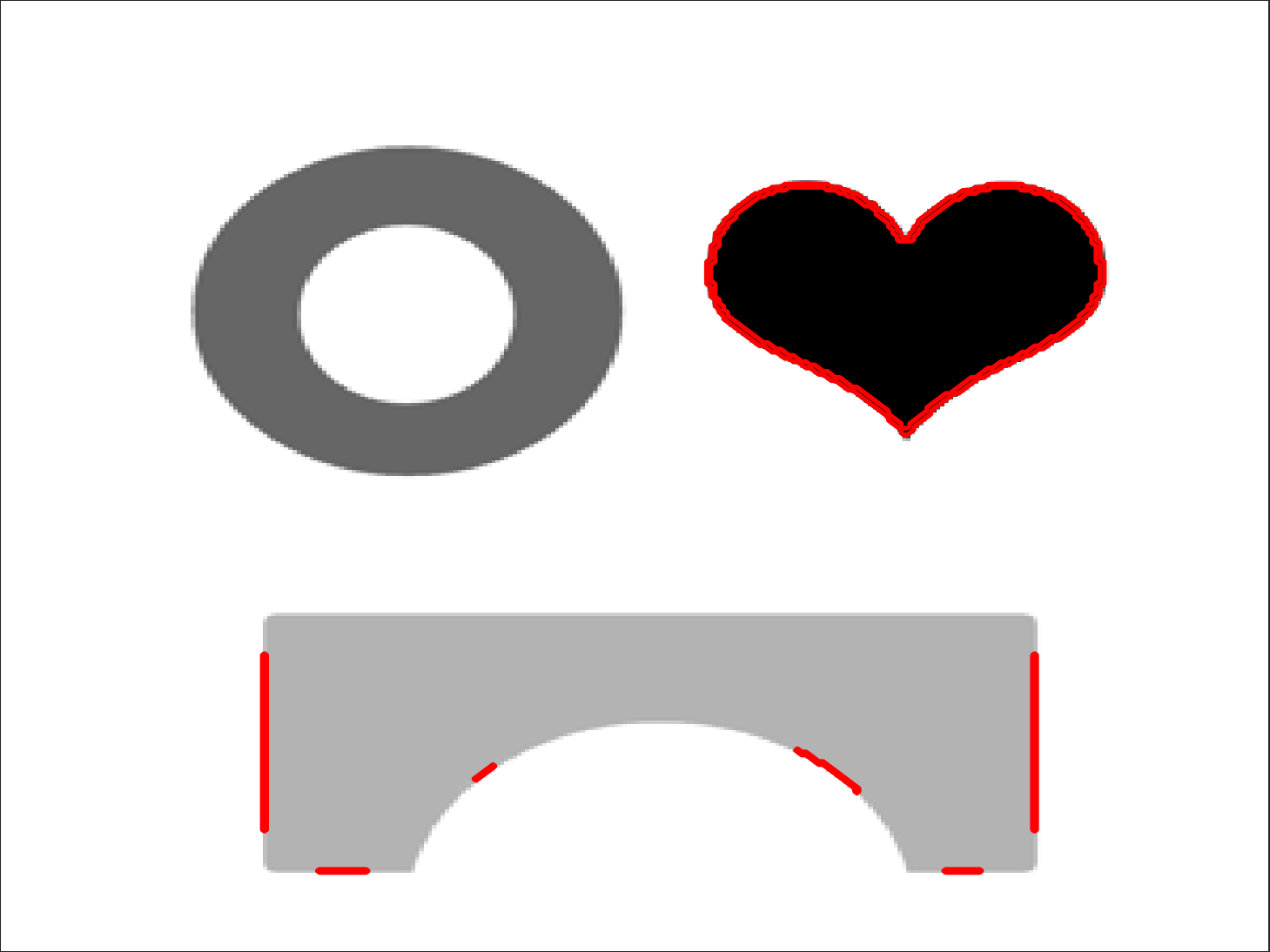}}
		}\hspace{5mm}
		\subfigure[The initial value $U^0_2$] {
			\fbox{\includegraphics[width=0.3\columnwidth,width=0.3\columnwidth]{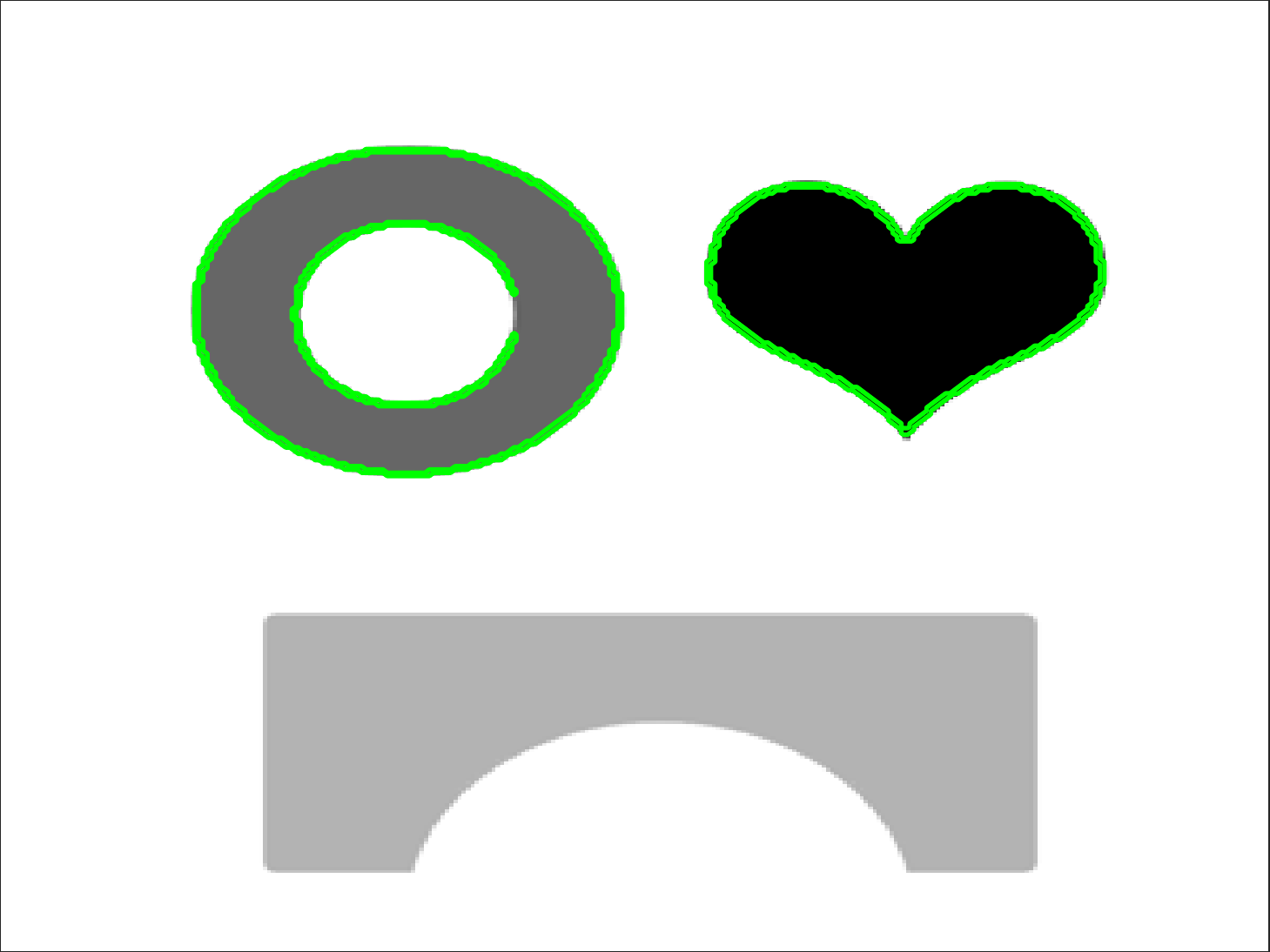}}
		}
		\subfigure[The contour results] {
			\fbox{\includegraphics[width=0.3\columnwidth,width=0.3\columnwidth]{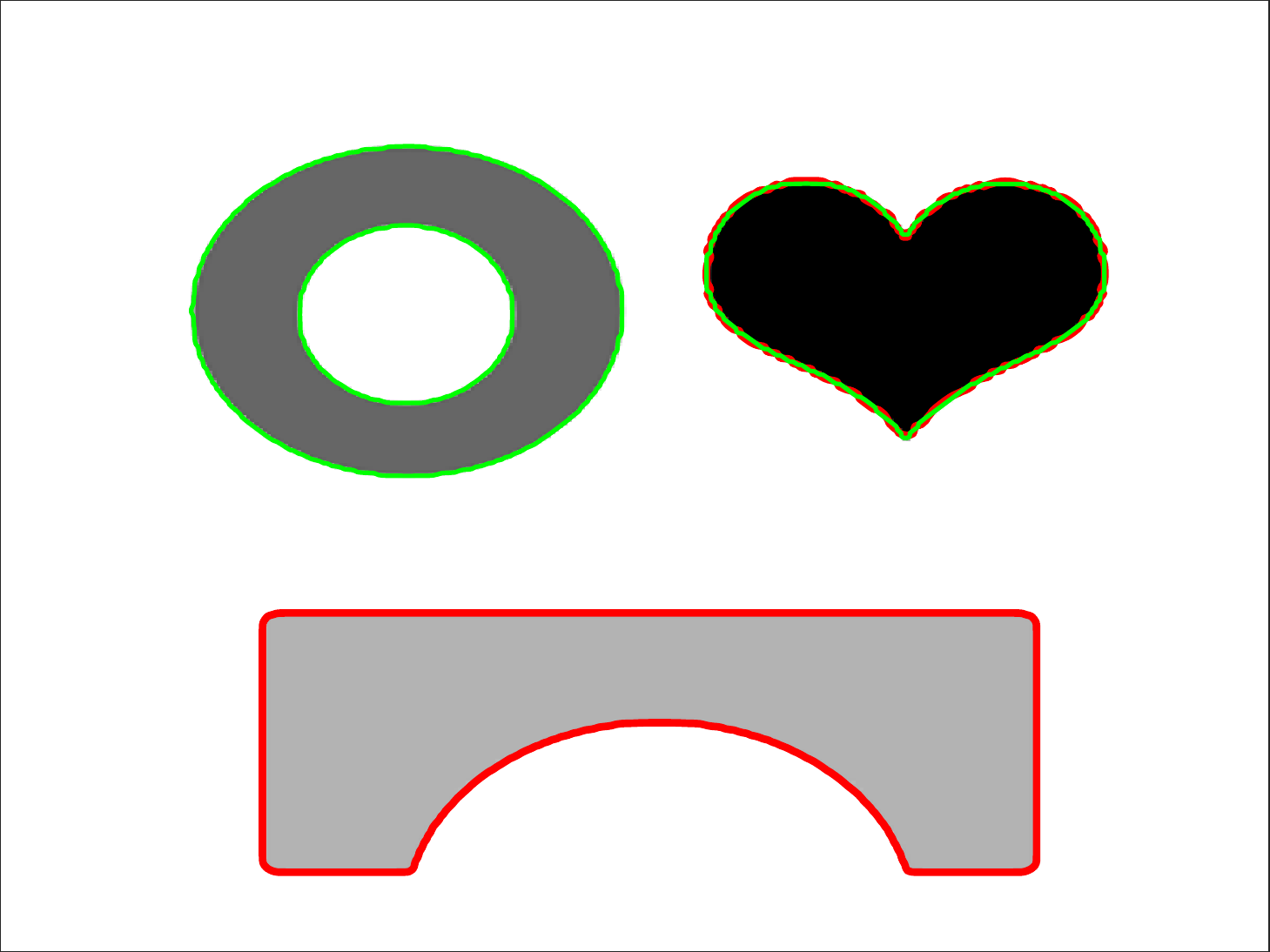}}
		}\hspace{5mm}
		\subfigure[The segmentation result] {
			\fbox{\includegraphics[width=0.3\columnwidth,width=0.3\columnwidth]{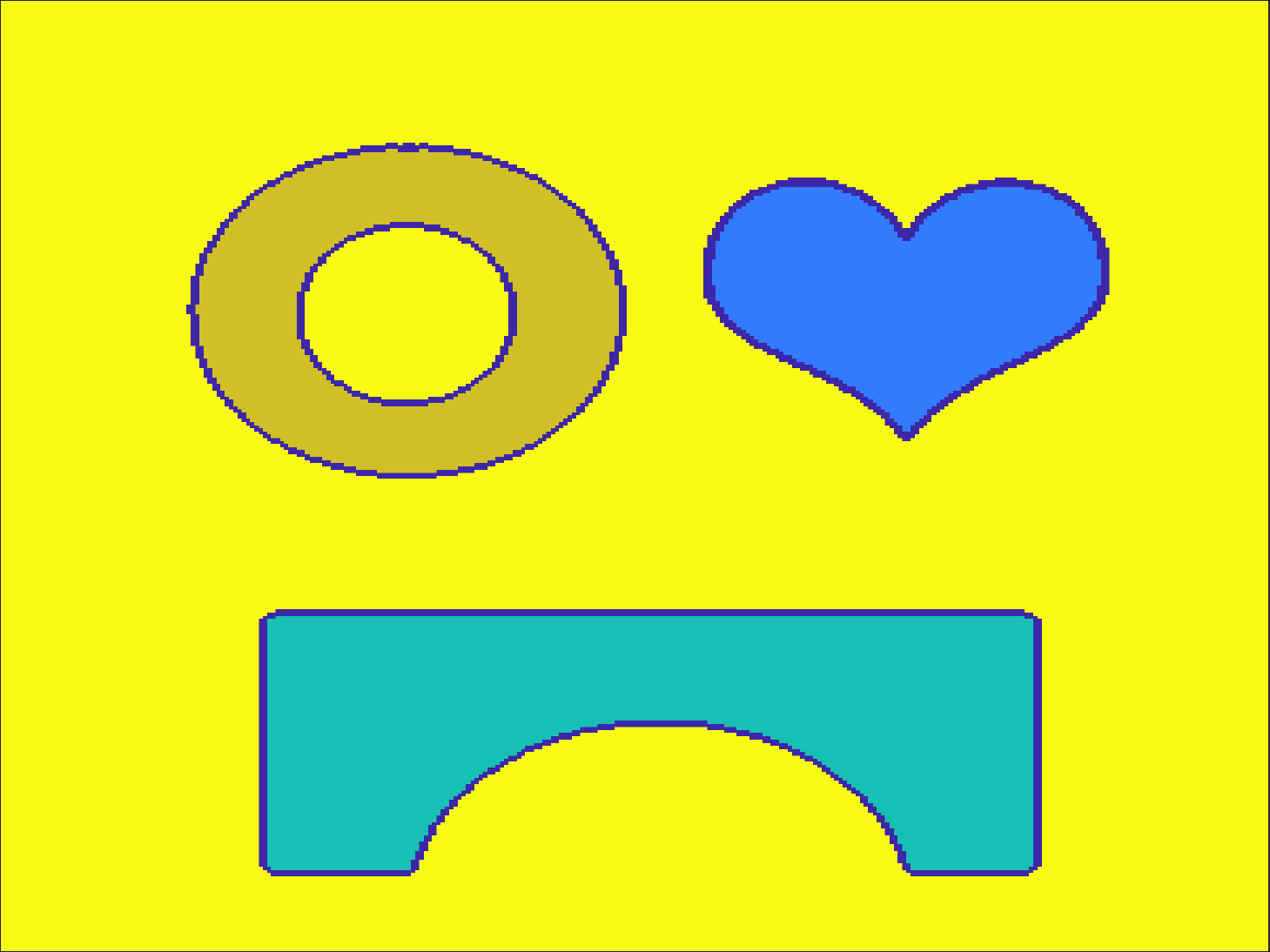}}
		}
			 \caption{ \label{fig:circle1024} Comparison between ETD1 and ETDRK2 schemes. (a) and (b): Initial values $U^0_1$ and $U^0_2$; (c) The final contours; (d) The segmentation results.   }
	\end{figure}

\begin{table}[!t]
\begin{tabular}{cccccccc}
\hline
            Scheme       &  Time step   & Iteration    &  CPU time(s)  & \multicolumn{2}{c}{1-Max} &   \multicolumn{2}{c}{Min} \\
\cmidrule(lr){5-6}\cmidrule(lr){7-8}
                                 &                      &        &                         &$U_1$                         &$U_2$  &$U_1$&$U_2$ \\
           ETD1          & $\Delta t = 0.3$   &  65     &   { 6.35}  & { 1.88e-2}&{ 3.64e-3}  &{ 1.47e-2} &{ 4.51e-3}  \\
           ETDRK2        & $\Delta t = 0.3$   &  36    &    { 4.76}    & { 1.54e-2}&{ 3.20e-3}  &{ 1.29e-2} &{ 4.14e-3 }\\\hline  
\end{tabular}
\caption{\label{tab:CPU_time_circle1024}  Comparison between ETD1 and ETDRK2 schemes for the synthetic grayscale image.} 
\end{table}

%
%
%
%

\begin{figure}[!htp] \centering
		\subfigure[ETD1] {
			\includegraphics[width=0.4\columnwidth]{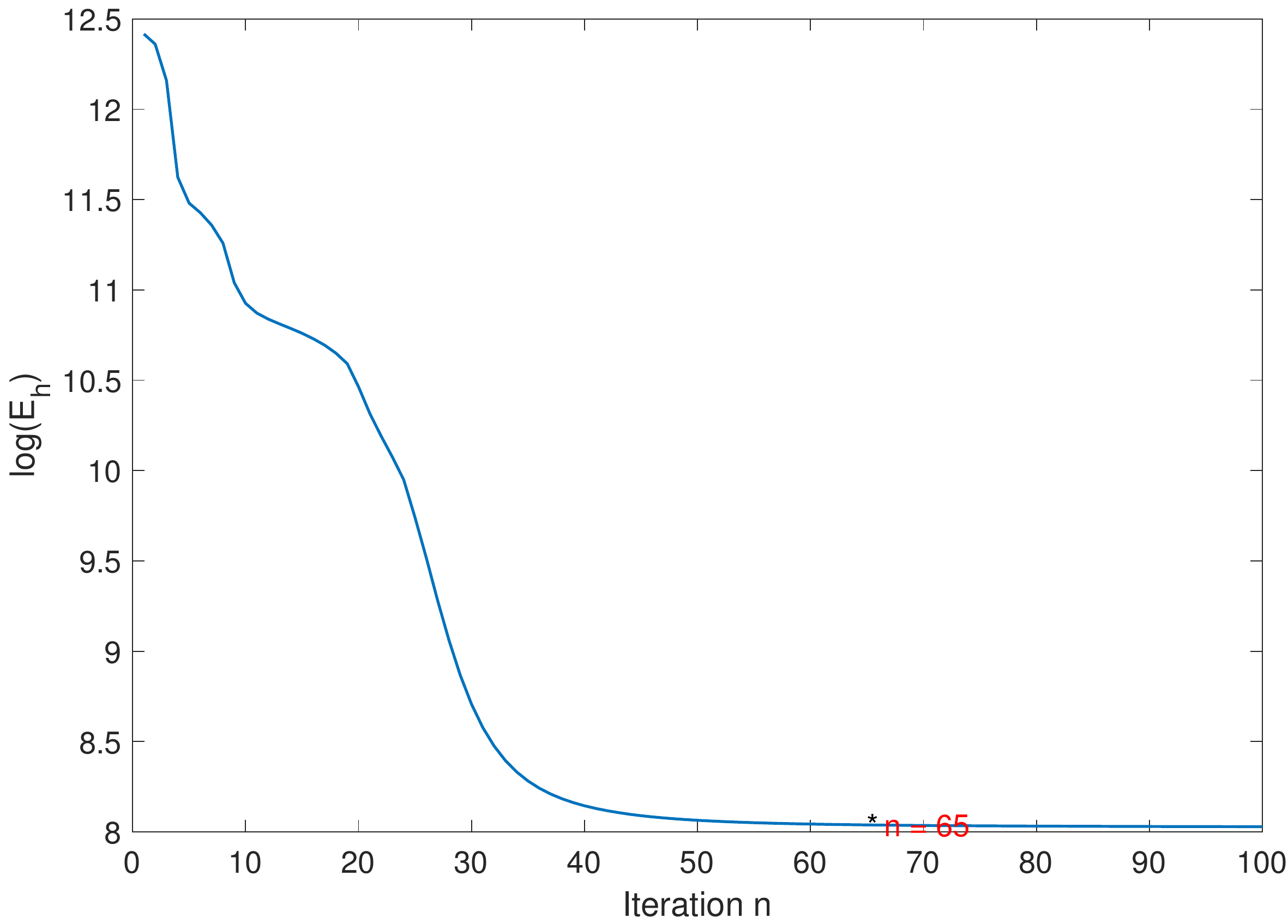}
		}
		\subfigure[ETDRK2] {
			\includegraphics[width=0.4\columnwidth]{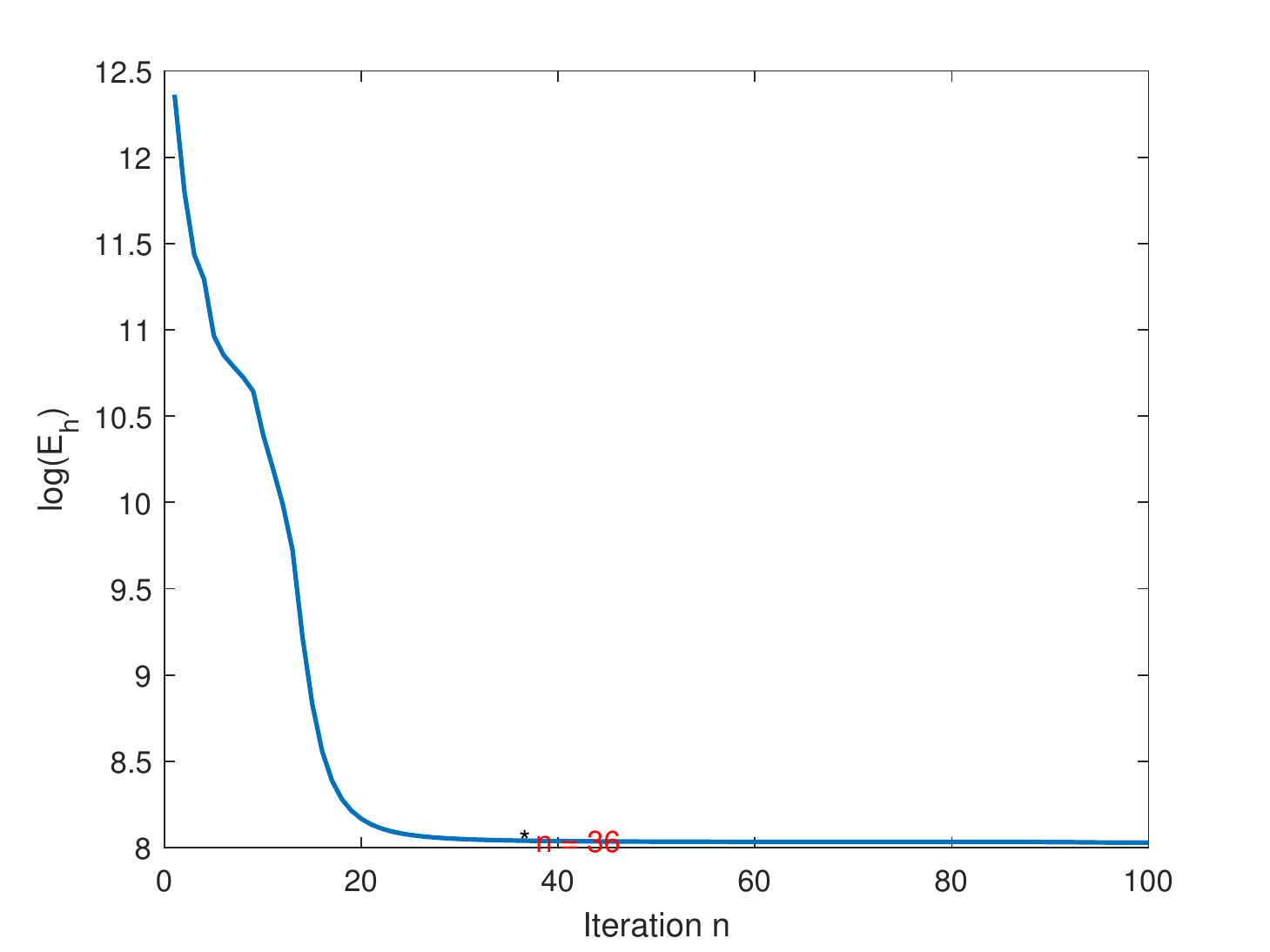}
		}
		\caption{\label{fig:circle1024_energy} The energy evolution of the two time discretizations. }
\end{figure}

\subsection{Experiments on multi-phase color images}
In this part, we carry out multi-phase image segmentations for some color images, including 4-phase and 6-phase segmentations. The first four images in Figure \ref{fig:subsec2} are 4-phase segmentation. The first is a color version of the synthetic grayscale image in the last example. The second one is the image ``flowers" with two flowers in red and yellow, stalks in green, sky in blue. The remaining two are extracted from the image dataset ``white blood cell", for which we can segment the white blood cell and its nucleus from the around red blood cell. For the 4-phase image, Figure \ref{fig:subsec2} states the original image, the initial contour $U^0_1$ and $U^0_2$, the contour results, and the color segmentation result from left to right column, respectively. For the 6-phase image, we give the original image, the initial contour $U^0_1$, $U^0_2$ and $U^0_3$, and the color segmentation result, respectively. Notably, 6-phase segmentation is beyond the capability of two phase variables. Hence we need three phase variables, that is, three Allen--Cahn equations. There will appear some vacuum segments which does not influence the segmented results.

	The relevant parameters change along with the images, which are listed in Table \ref{tab:subsec2}. First, $\kappa$ in multi-phase inhomogeneous graph Laplacian operator \eqref{eqn:cl} is fixed as 50, an independent parameter which can not change along with the image. The threshold value $\sigma$ and the denoising number $M$ need to be adjusted through trial and experience. We conclude that $0 \leq \sigma \leq 0.05, M \in \{1,5,10\}$ in most cases. {Actually, the choices for the relevant parameters are selected balancing the segmentation result and cost time. For instance, as long as $\sigma \in [0.02,0.05]$, an ideal segmentation will be obtained for the image``flowers". Since the case of $\sigma = 0.05$ costs less time, we call the choice of parameter $\sigma$ is optimal and recorded in Table \ref{tab:subsec2}}. On the other hand, the parameters for the Allen--Cahn equations contain the diffusion parameter $\epsilon$, the weighted coefficient $\lambda$ for the fitting term, the spatial discretization step $h$, and the stabilizer $S$. We notice that $\epsilon, \lambda, h$ are similiar for 4-phase image segmentation. It does not mean that this choice is suitable and optimal for all images. The optimal choice for the relevant parameters varies according to the image. The stabilizer $S$ has been proved an upper bound in Lemma \ref{lemma:bound}, whereas we can narrow down the range of $S$ in practical, as we have done in Table \ref{tab:subsec2}.

\begin{figure}[!t]
\setlength{\fboxrule}{0.3pt}
  \setlength{\fboxsep}{0.3pt}
\resizebox{\textwidth}{!}{\hspace{11.8mm}
	\subfigure{
	\begin{minipage}[c]{0.2\textwidth}
   \fbox{\includegraphics[width=2.2cm,height = 1.8cm]{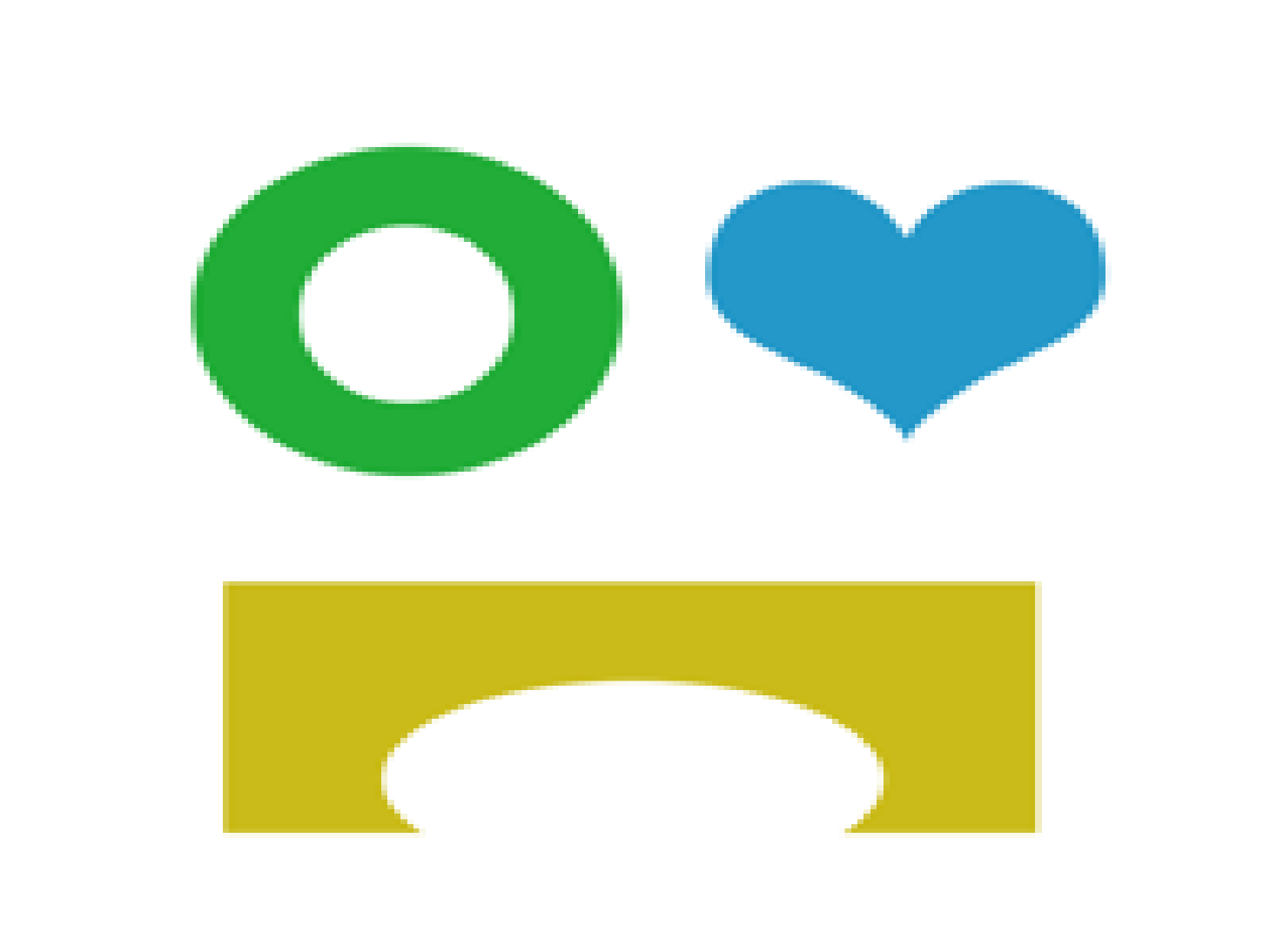}}\vspace{2mm}
    \fbox{\includegraphics[width=2.2cm,height = 1.8cm]{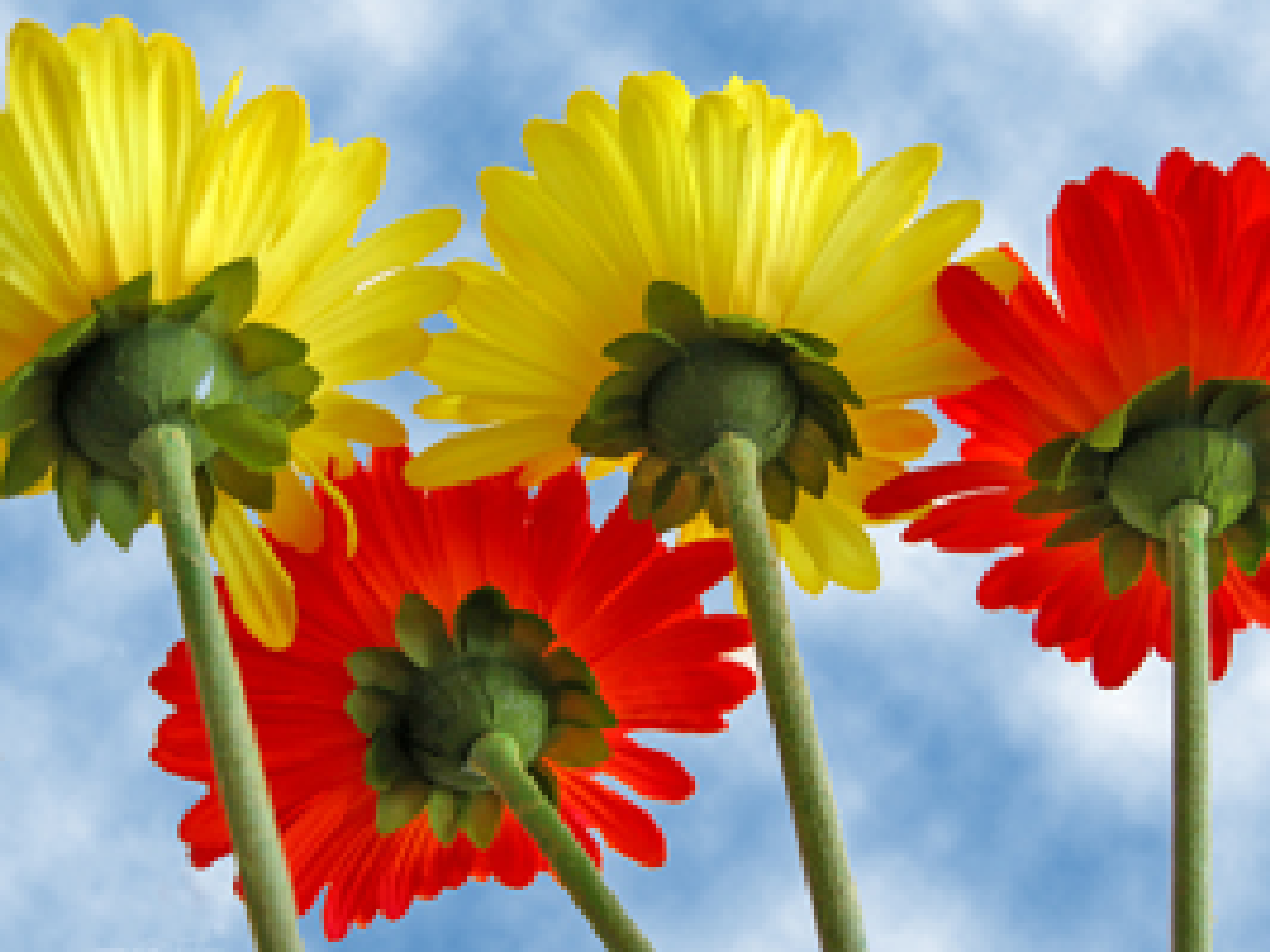}}\vspace{2mm}
    \fbox{\includegraphics[width=2.2cm,height = 1.8cm]{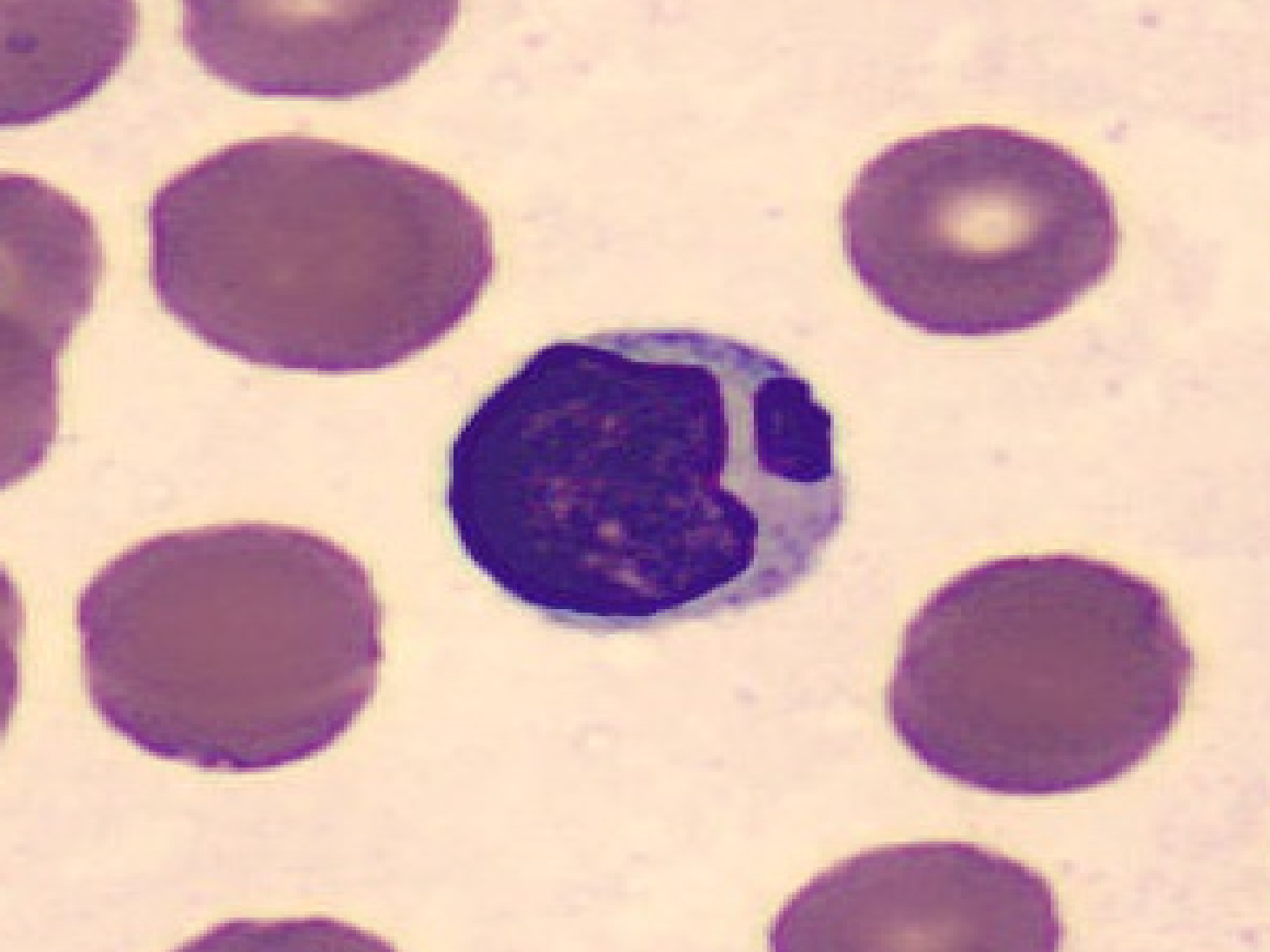}}\vspace{2mm}
	\fbox{\includegraphics[width=2.2cm,height = 1.8cm]{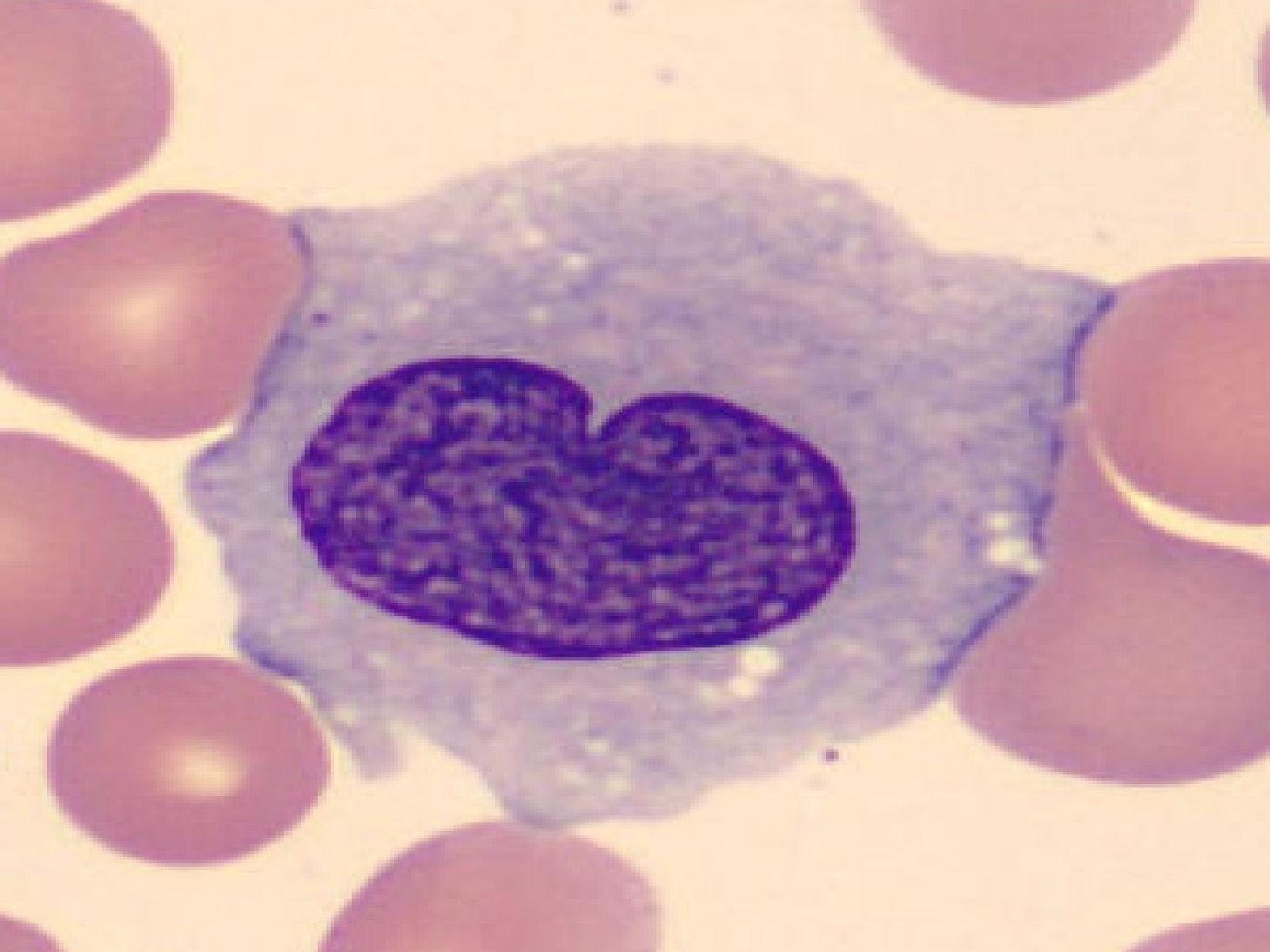}}\vspace{2mm}
   \fbox{\includegraphics[width=2.2cm,height = 1.8cm]{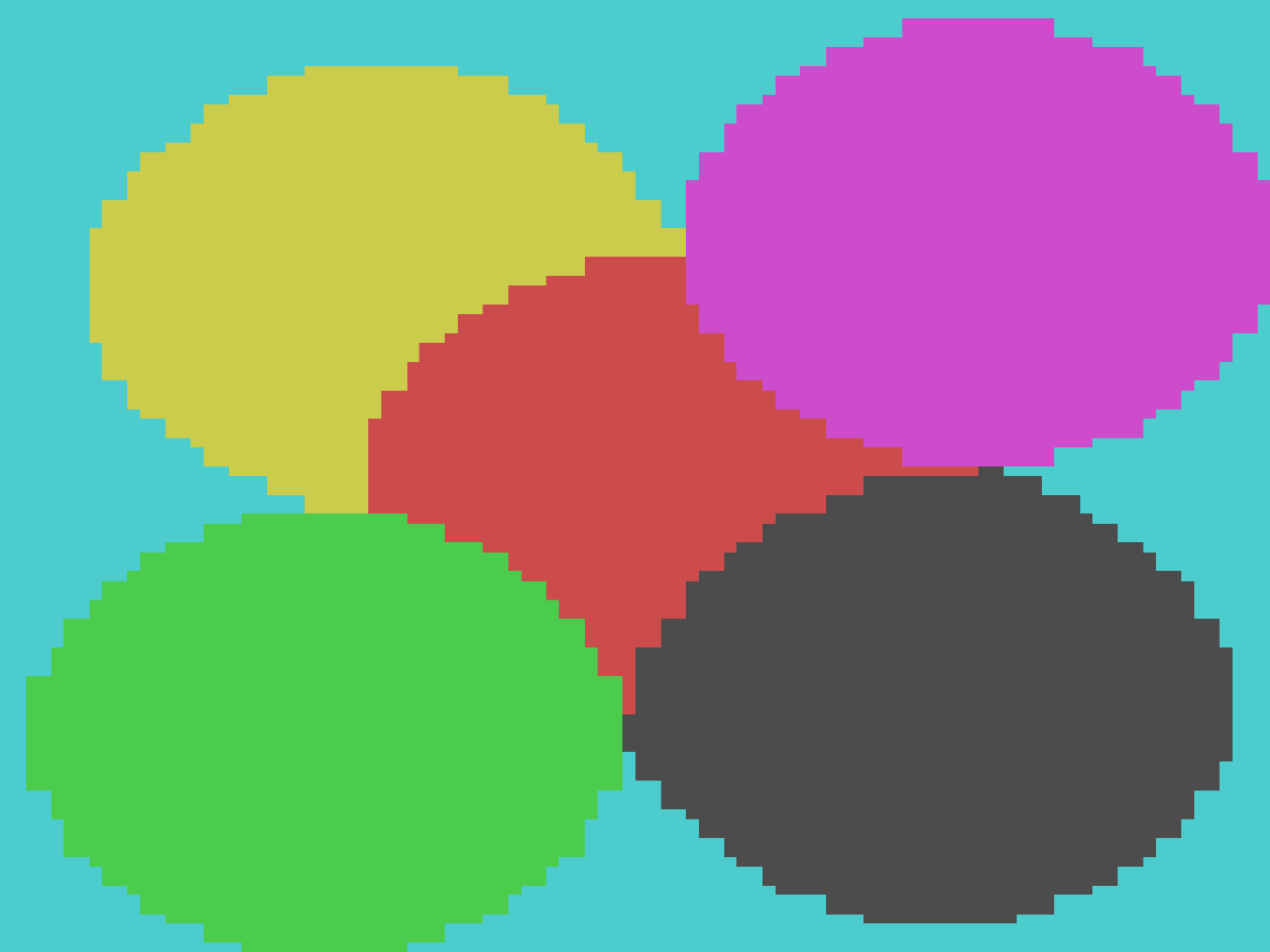}}\vspace{2mm}
	\end{minipage}
	}
  \hspace{-11.8mm}
	\subfigure{
	\begin{minipage}[c]{0.2\textwidth}
    \fbox{\includegraphics[width=2.2cm,height = 1.8cm]{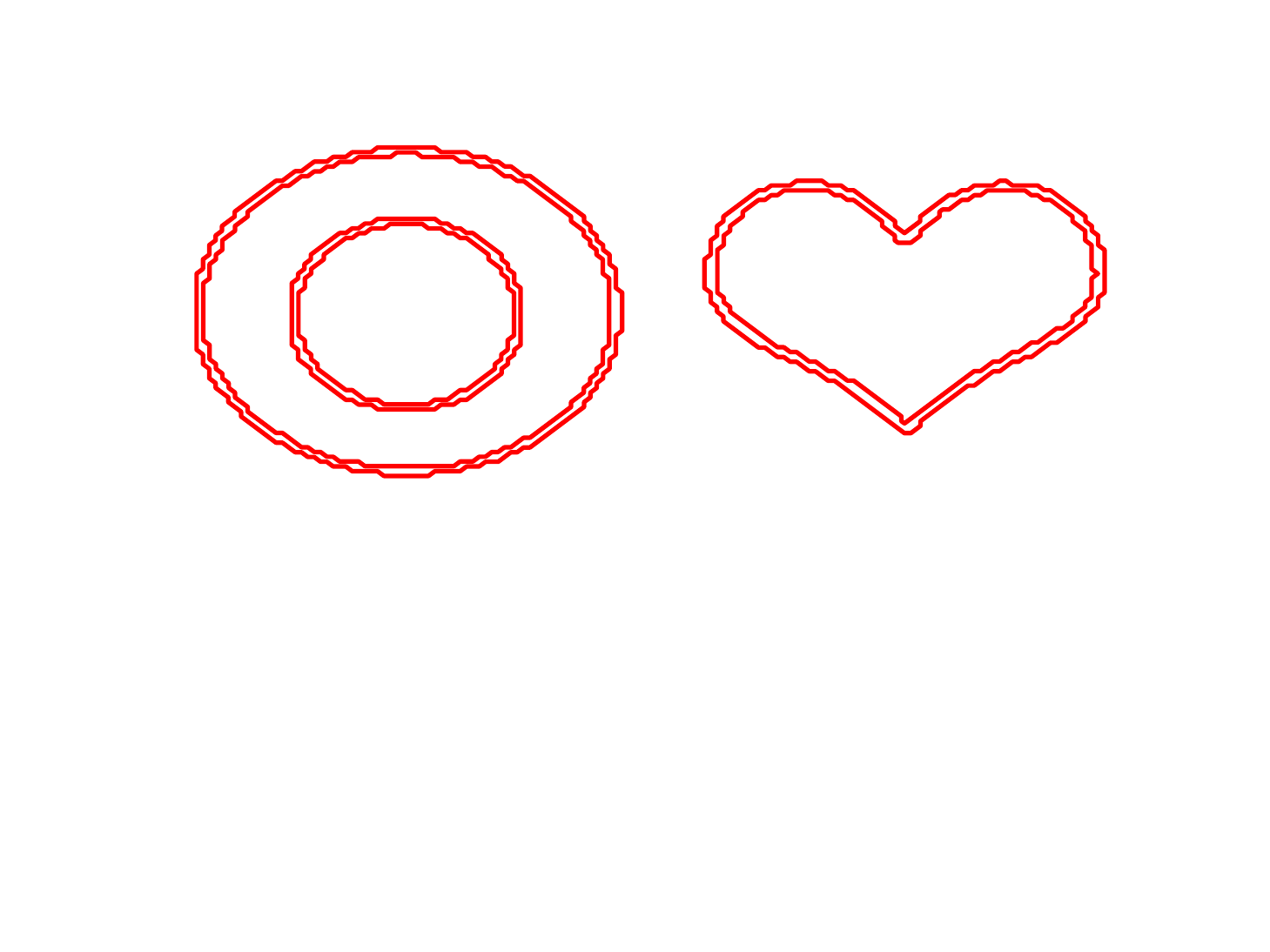}}\vspace{2mm}
    \fbox{\includegraphics[width=2.2cm,height = 1.8cm]{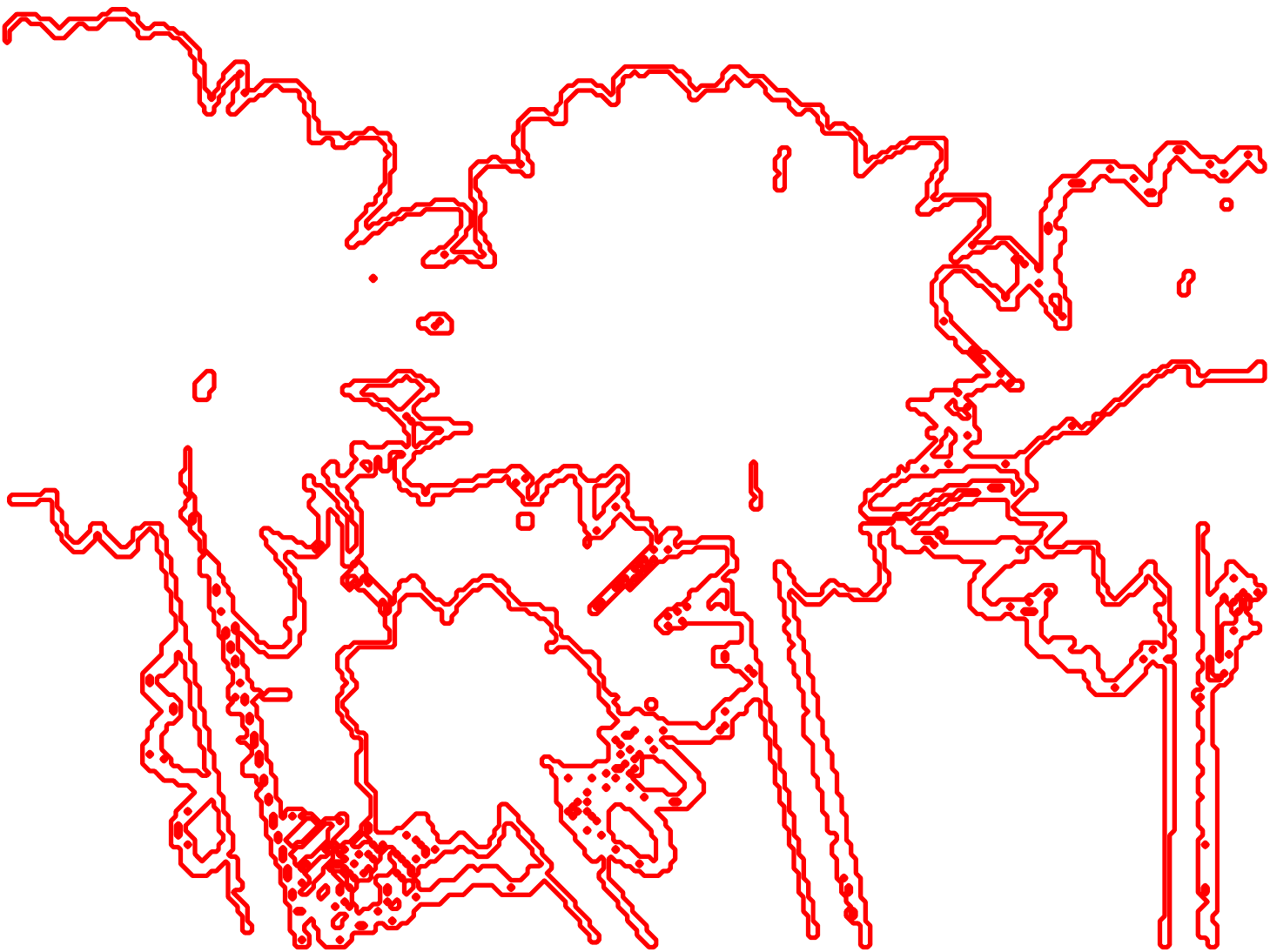}}\vspace{2mm}
   \fbox{\includegraphics[width=2.2cm,height = 1.8cm]{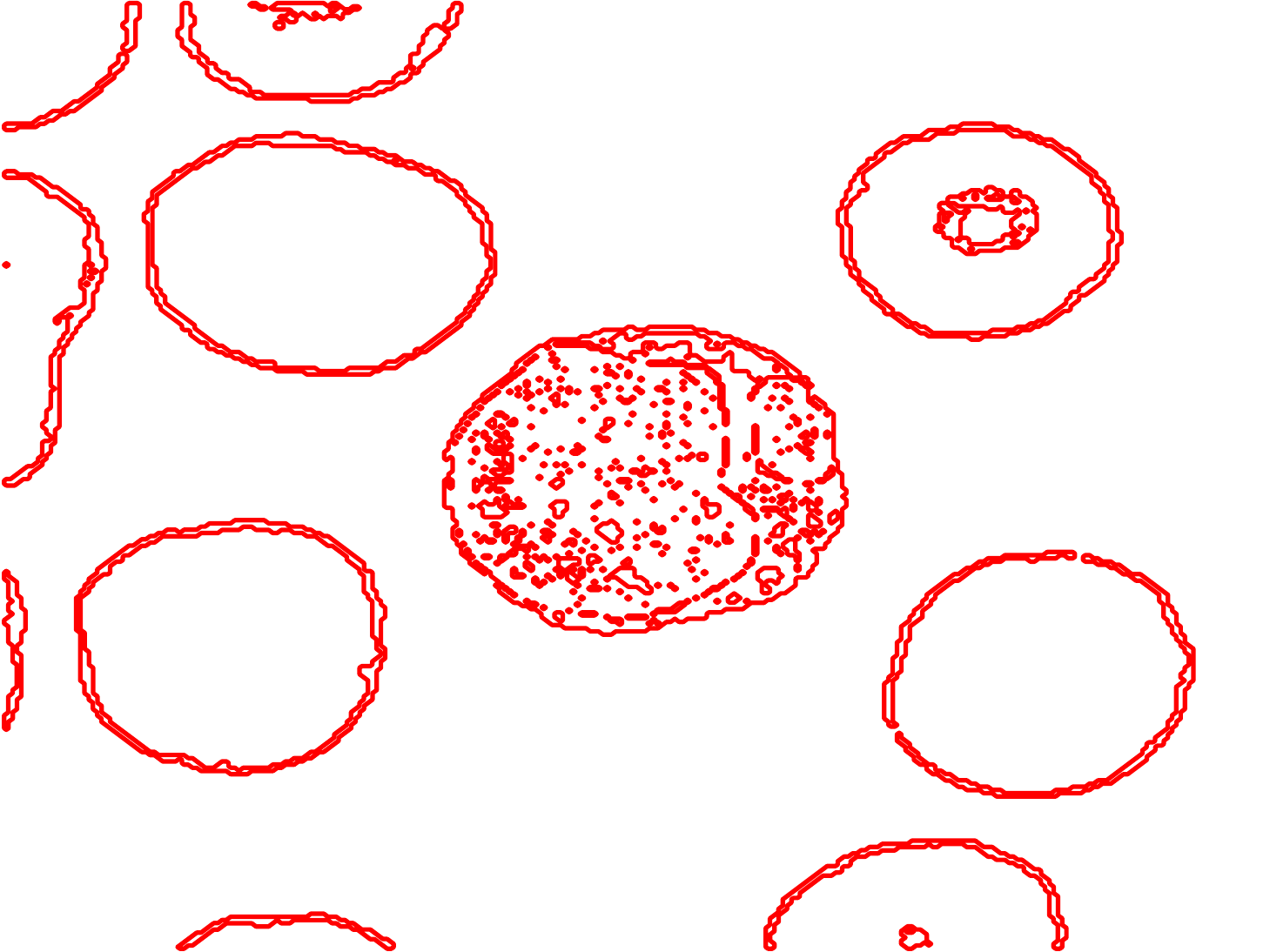}}\vspace{2mm}
    \fbox{\includegraphics[width=2.2cm,height = 1.8cm]{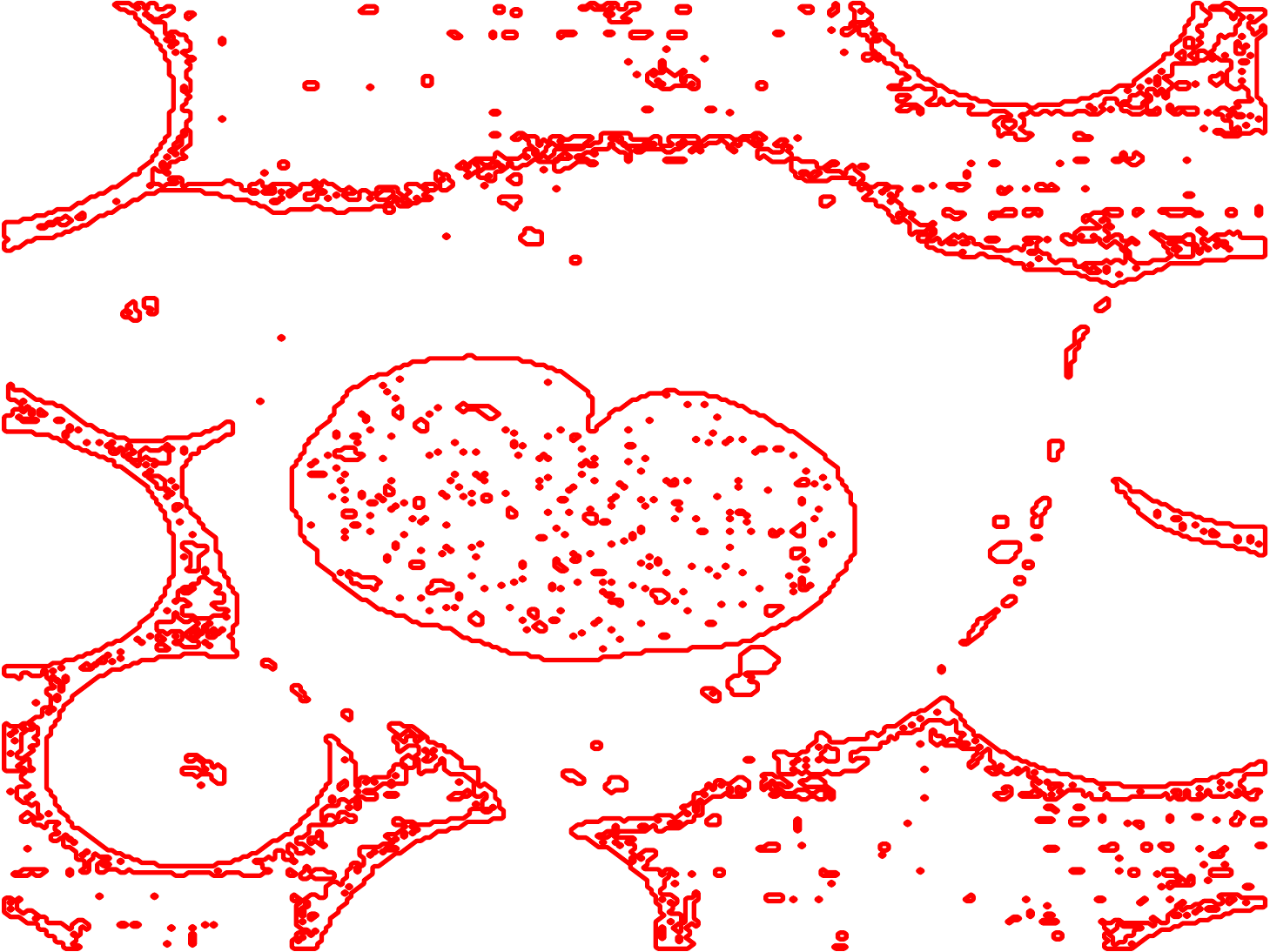}}\vspace{2mm}
    \fbox{\includegraphics[width=2.2cm,height = 1.8cm]{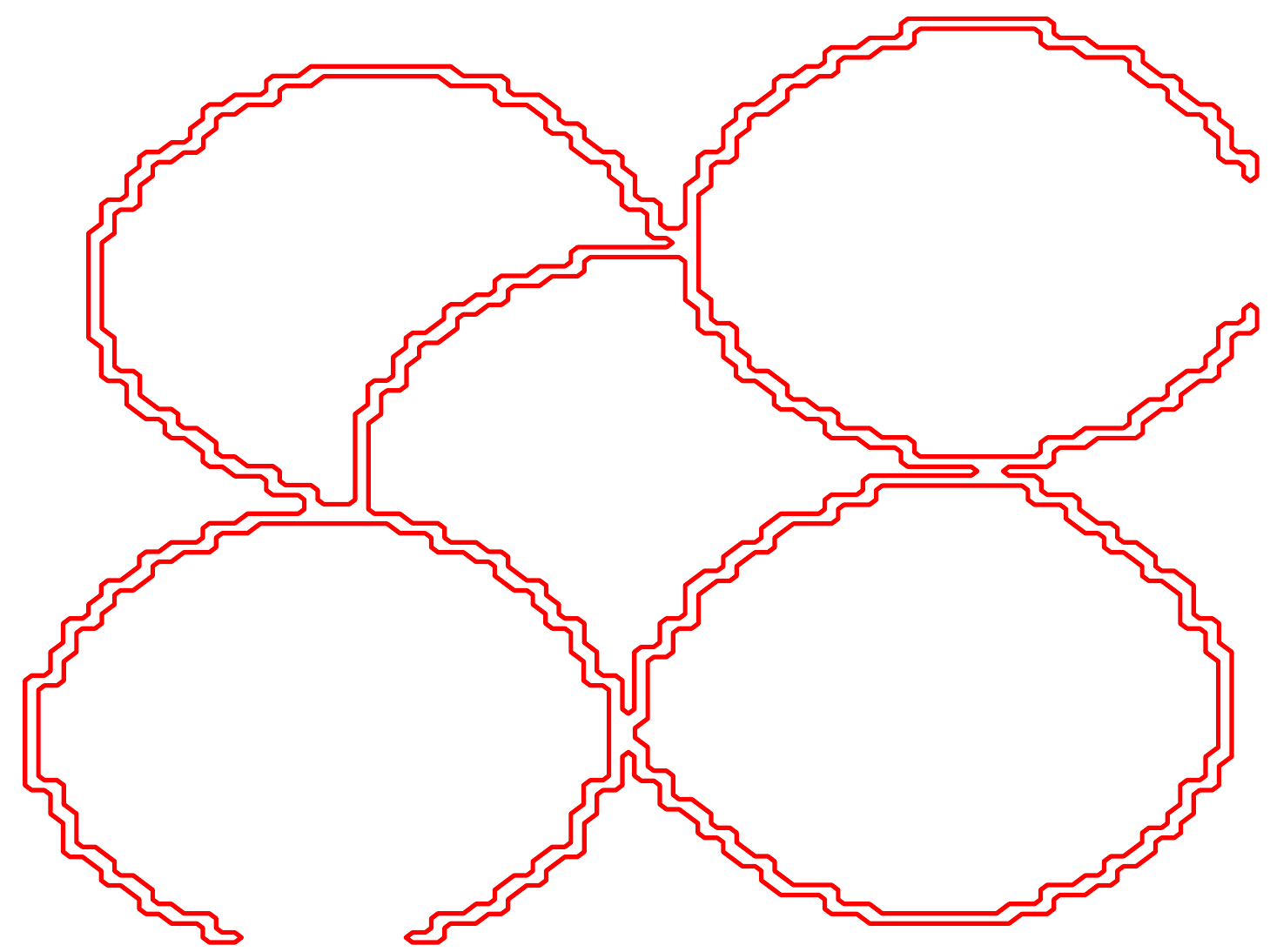}}\vspace{2mm}
  \end{minipage}
	}
  \hspace{-11.8mm}
	\subfigure{
	\begin{minipage}[c]{0.2\textwidth}
      \fbox{\includegraphics[width=2.2cm,height = 1.8cm]{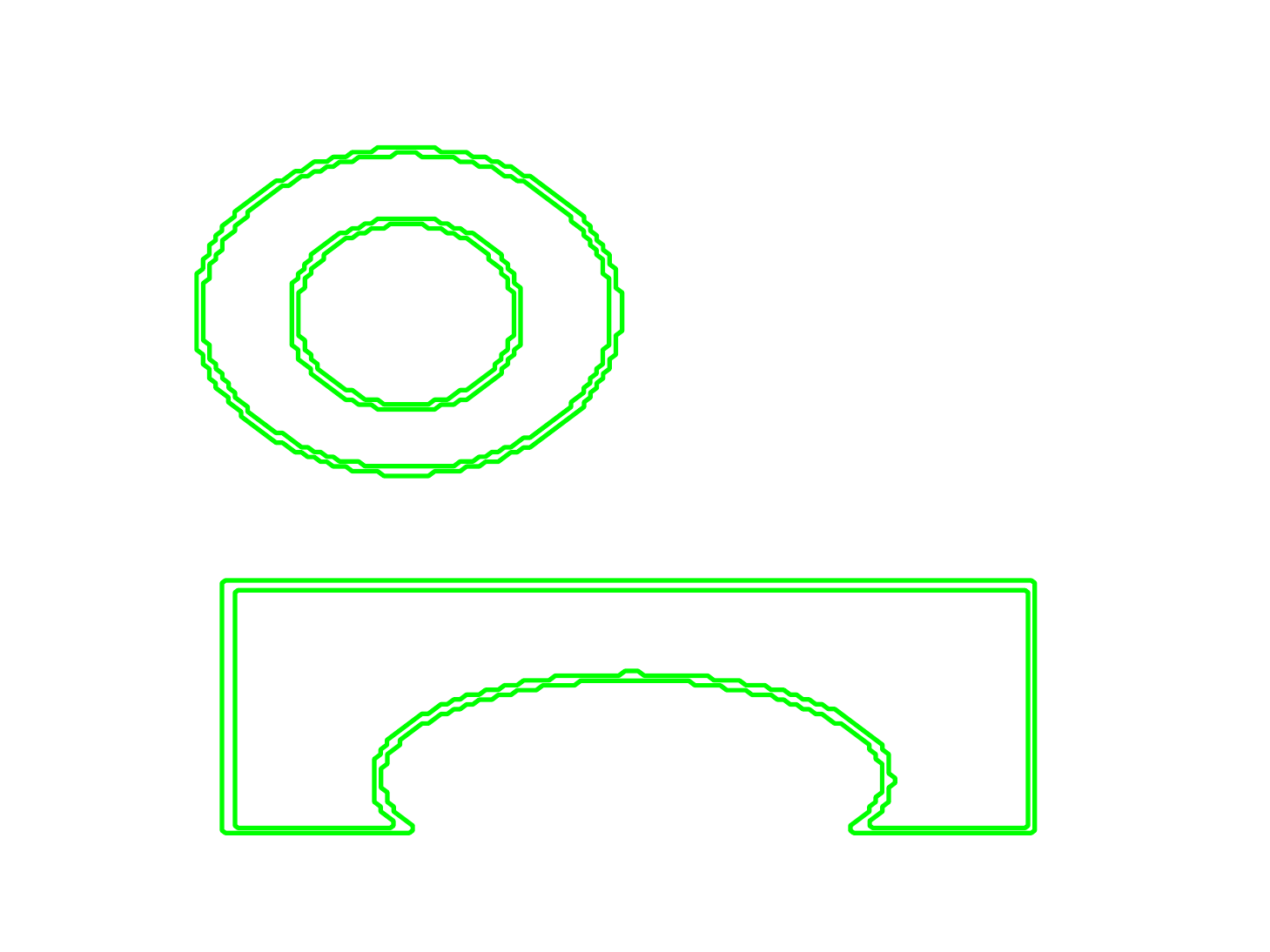}}\vspace{2mm}
    \fbox{\includegraphics[width=2.2cm,height = 1.8cm]{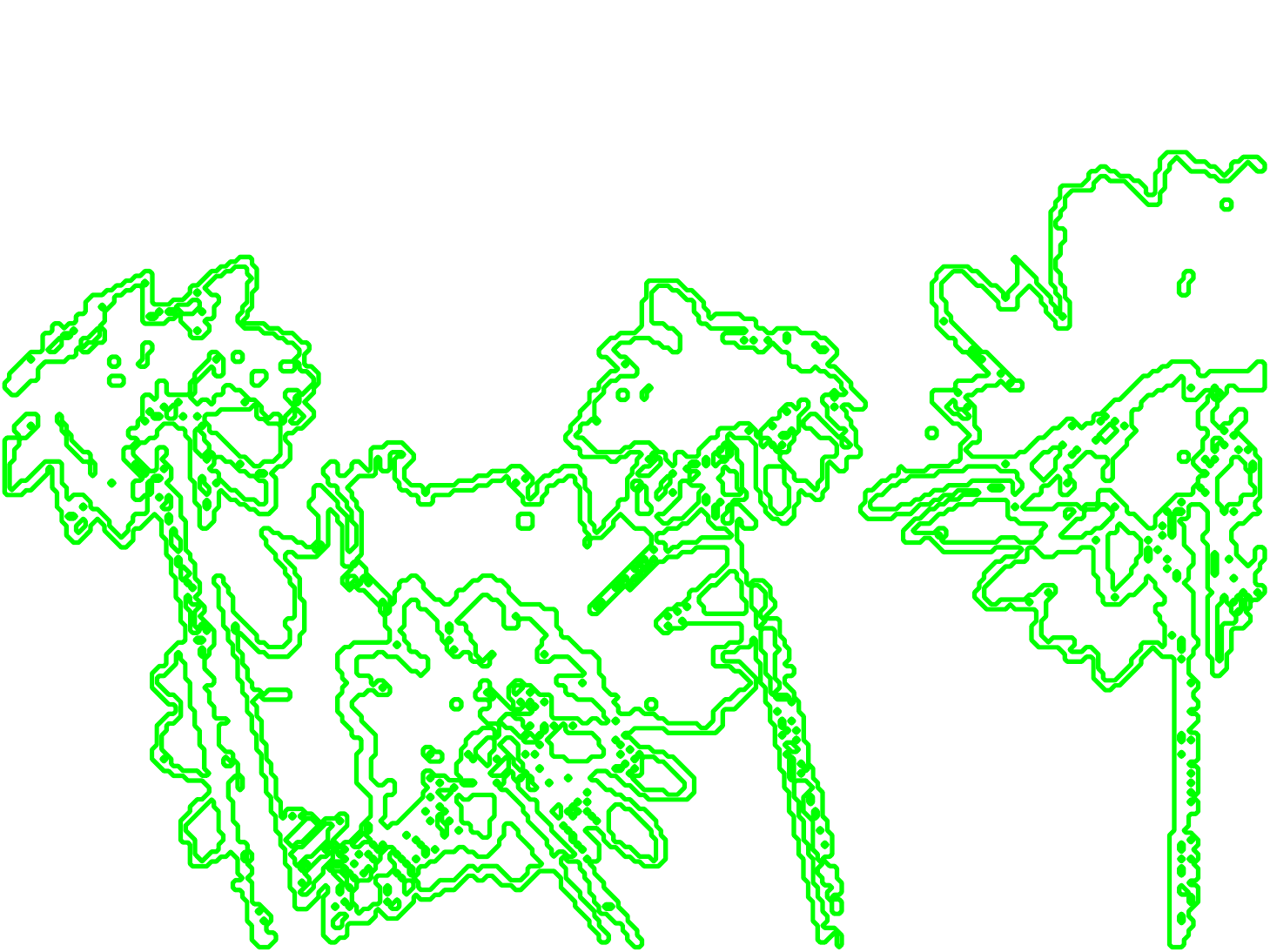}}\vspace{2mm}
    \fbox{\includegraphics[width=2.2cm,height = 1.8cm]{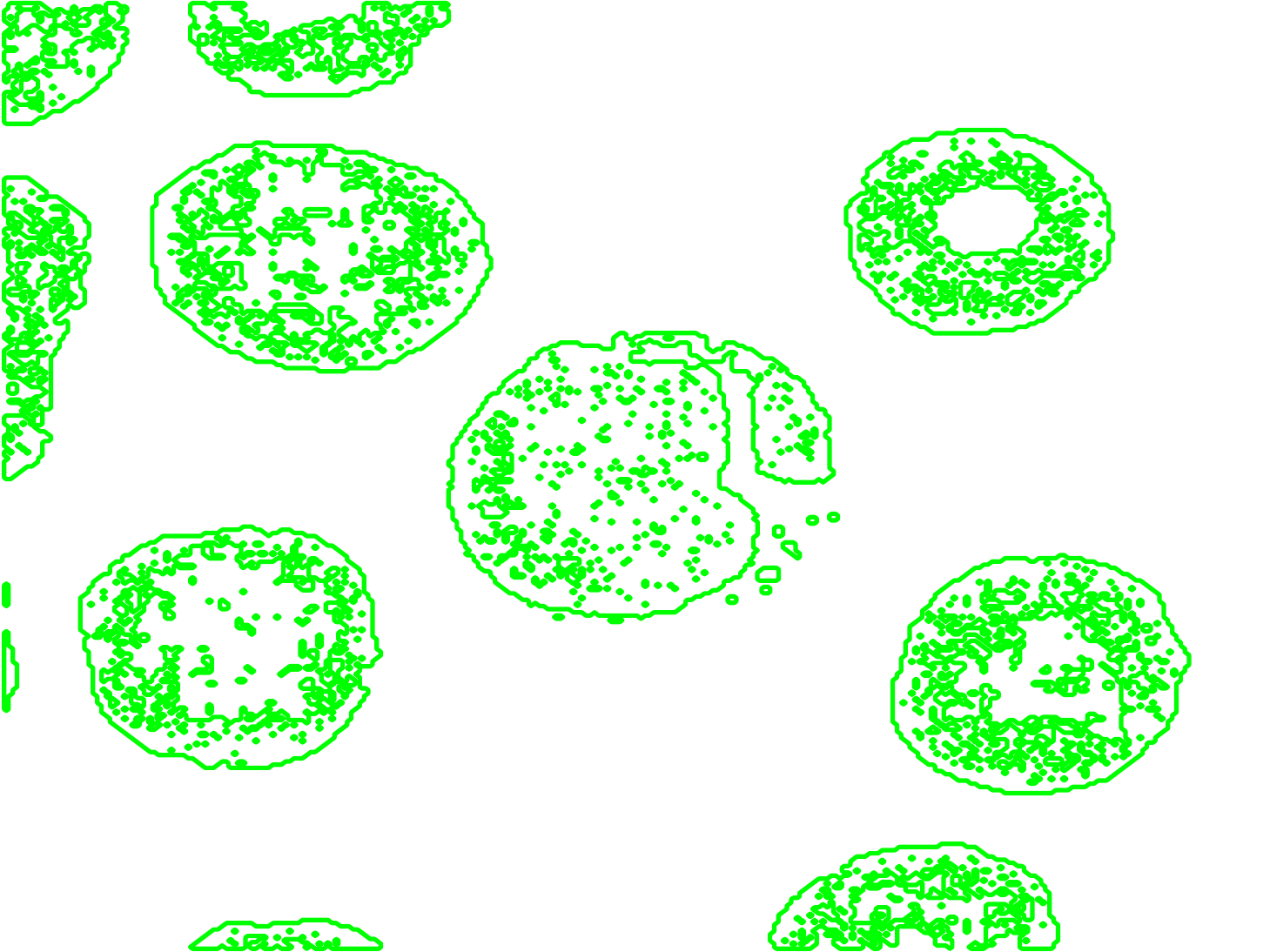}}\vspace{2mm}
    \fbox{\includegraphics[width=2.2cm,height = 1.8cm]{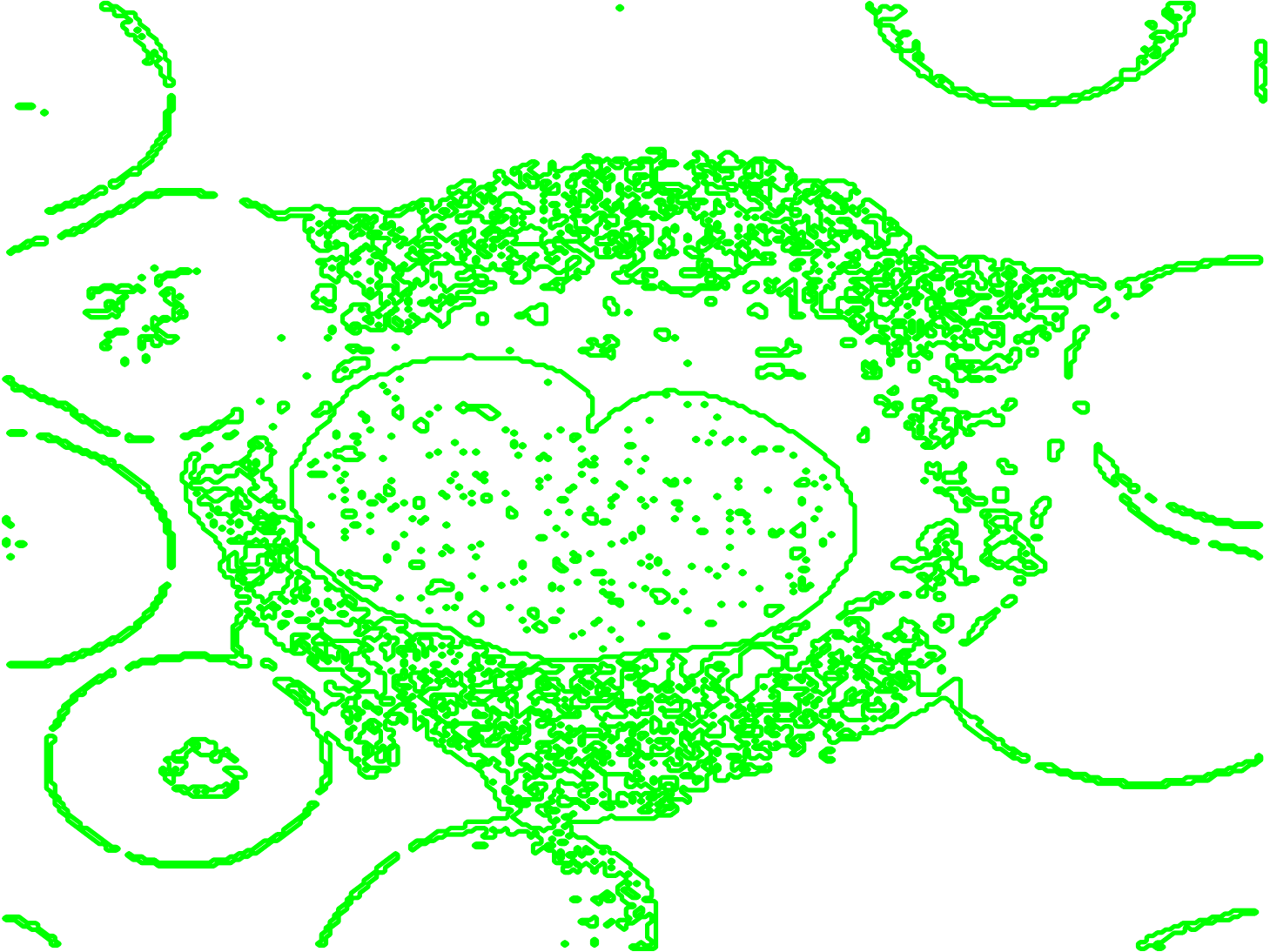}}\vspace{2mm}
    \fbox{\includegraphics[width=2.2cm,height = 1.8cm]{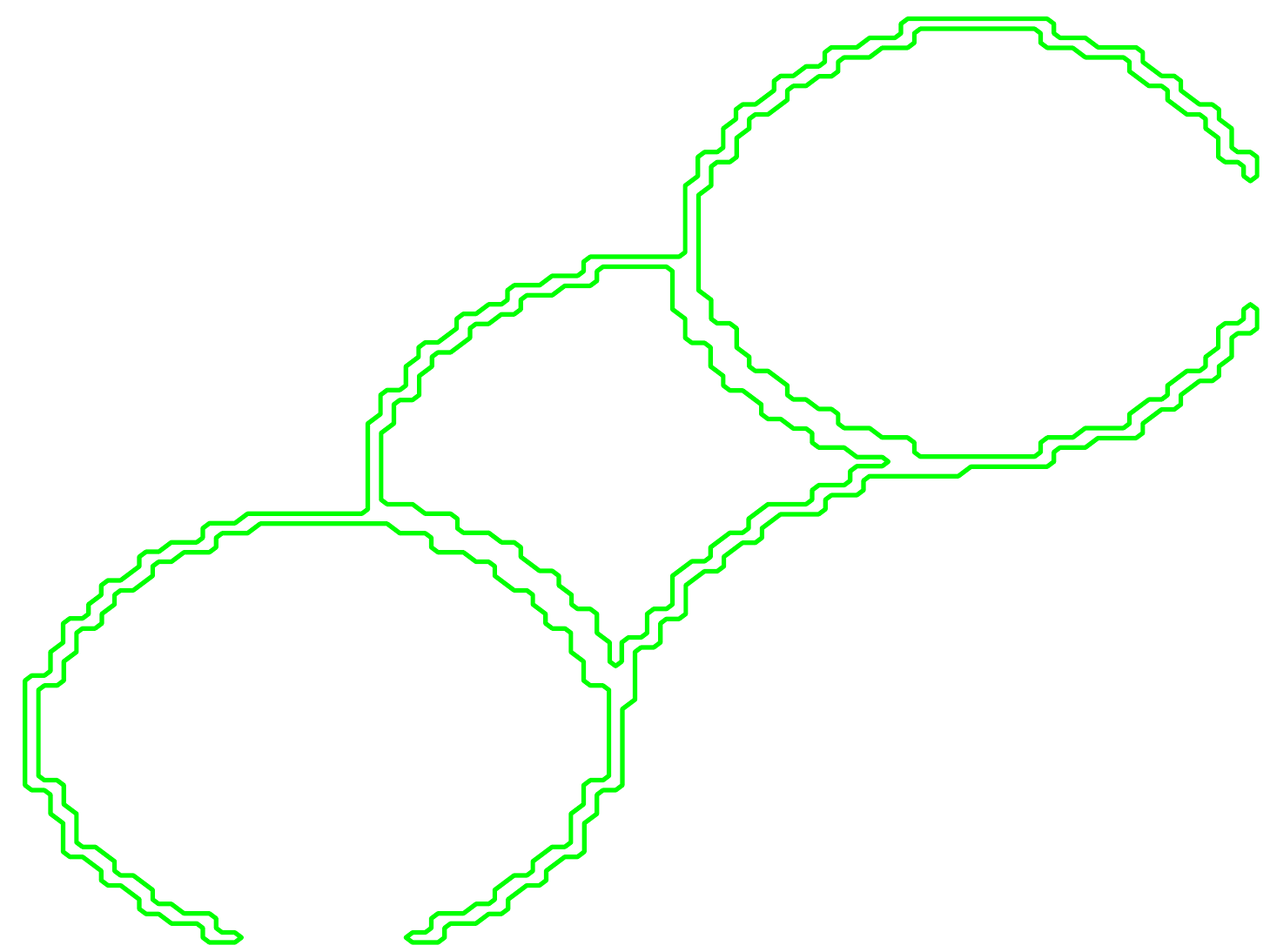}}\vspace{2mm}
	\end{minipage}
	}
  \hspace{-11.8mm}
	\subfigure{
	\begin{minipage}[c]{0.2\textwidth}
    \fbox{\includegraphics[width=2.2cm,height = 1.8cm]{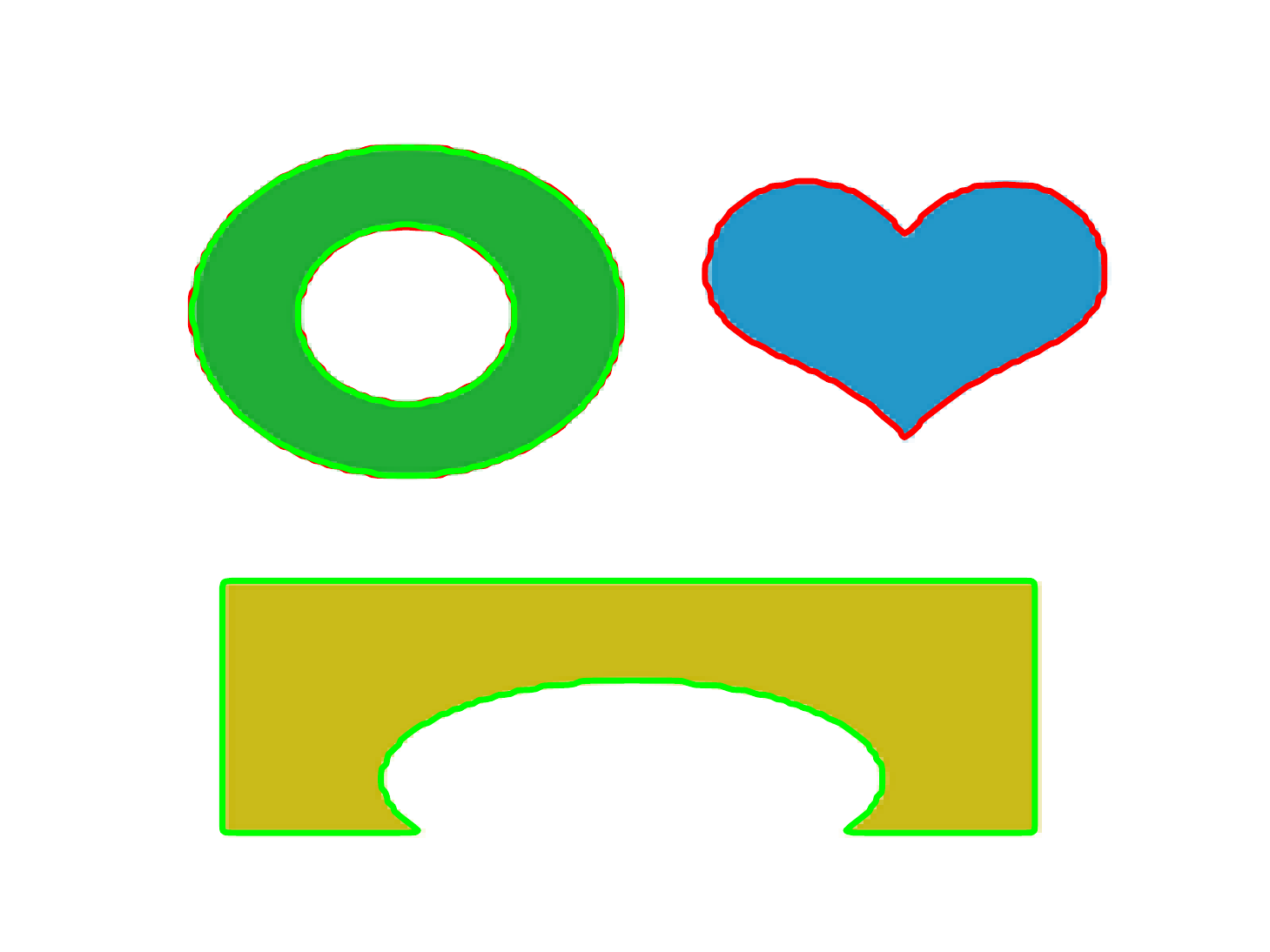}}\vspace{2mm}
   \fbox{\includegraphics[width=2.2cm,height = 1.8cm]{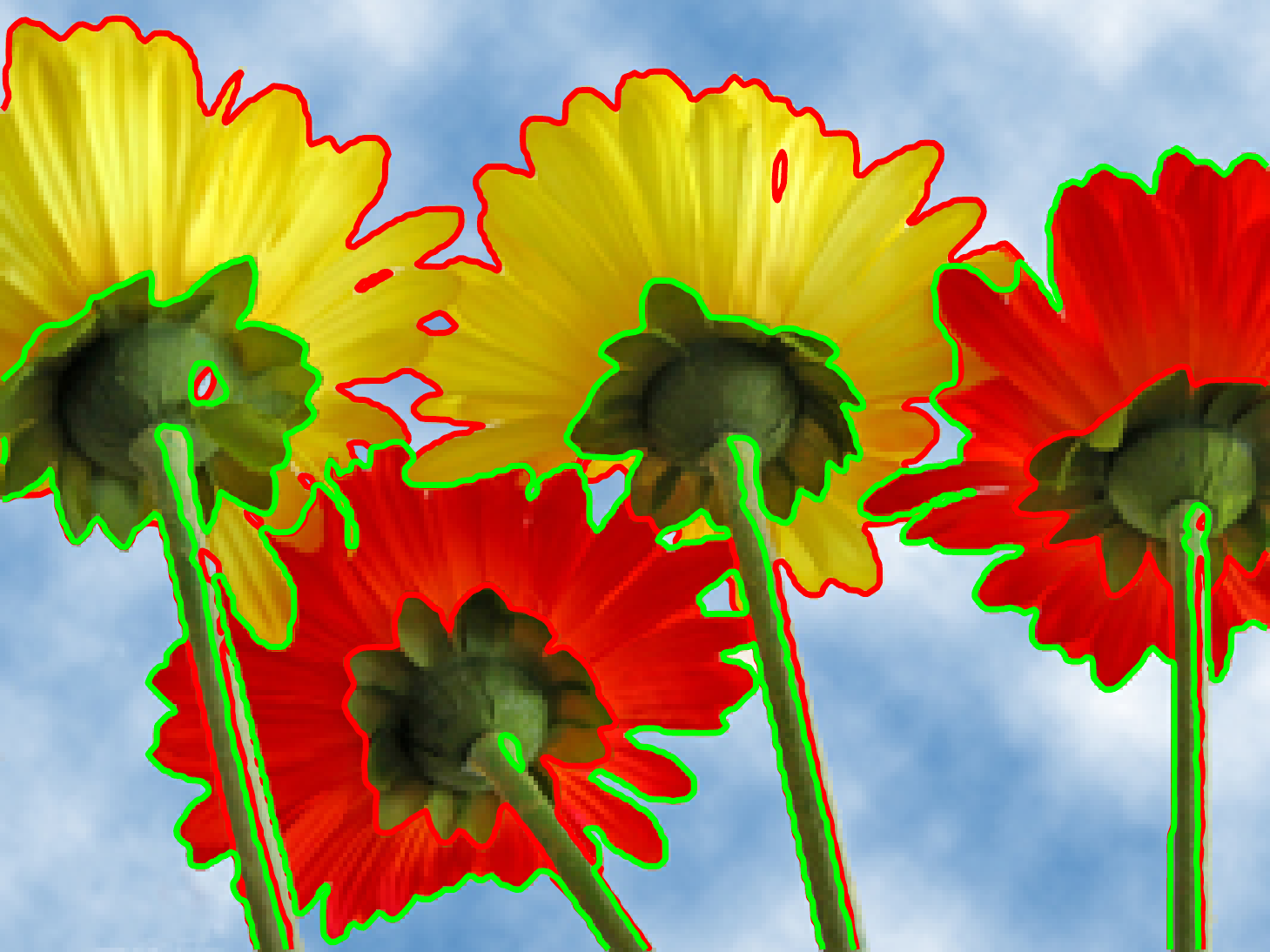}}\vspace{2mm}
   \fbox{\includegraphics[width=2.2cm,height = 1.8cm]{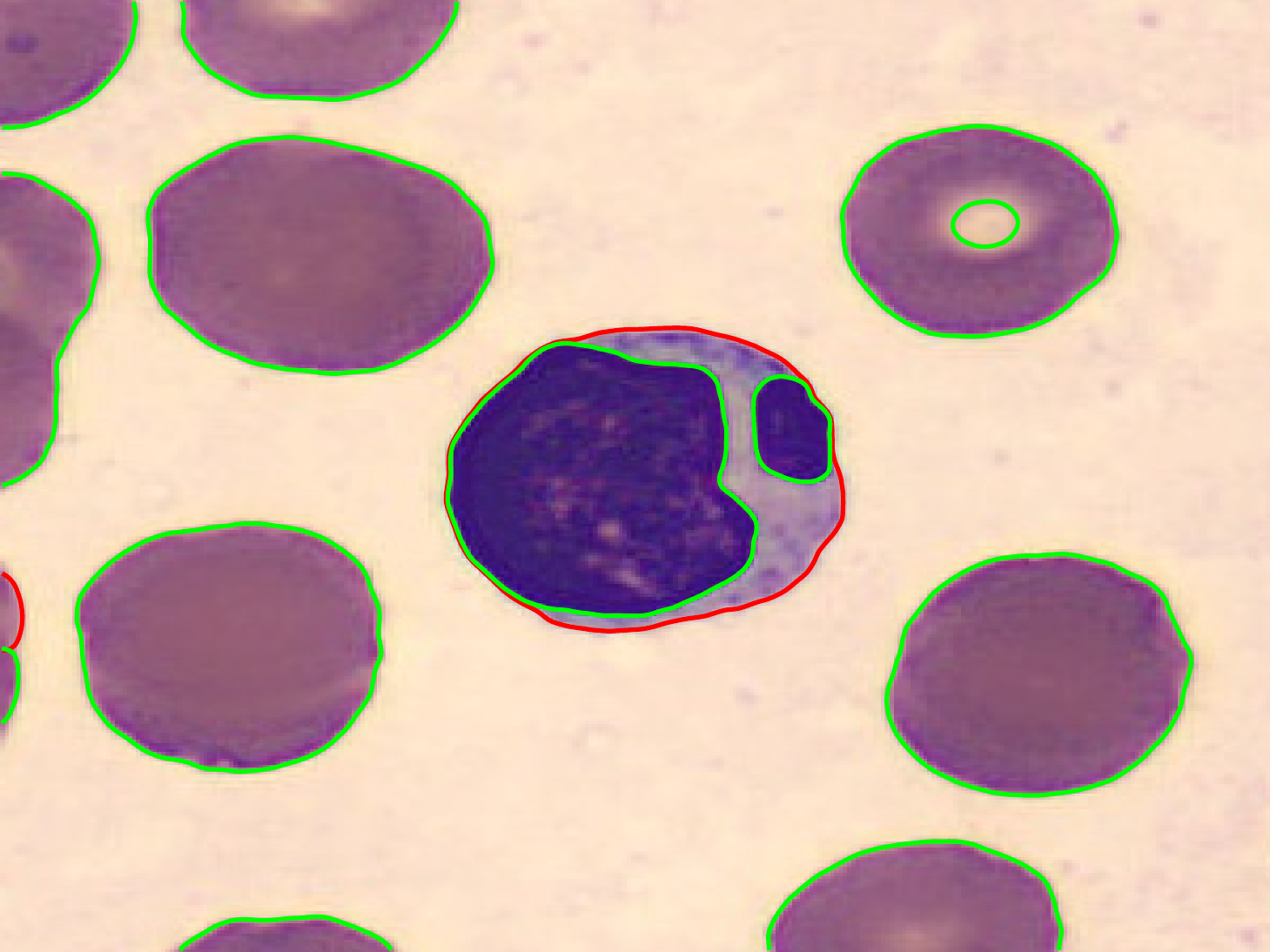}}\vspace{2mm}
   \fbox{\includegraphics[width=2.2cm,height = 1.8cm]{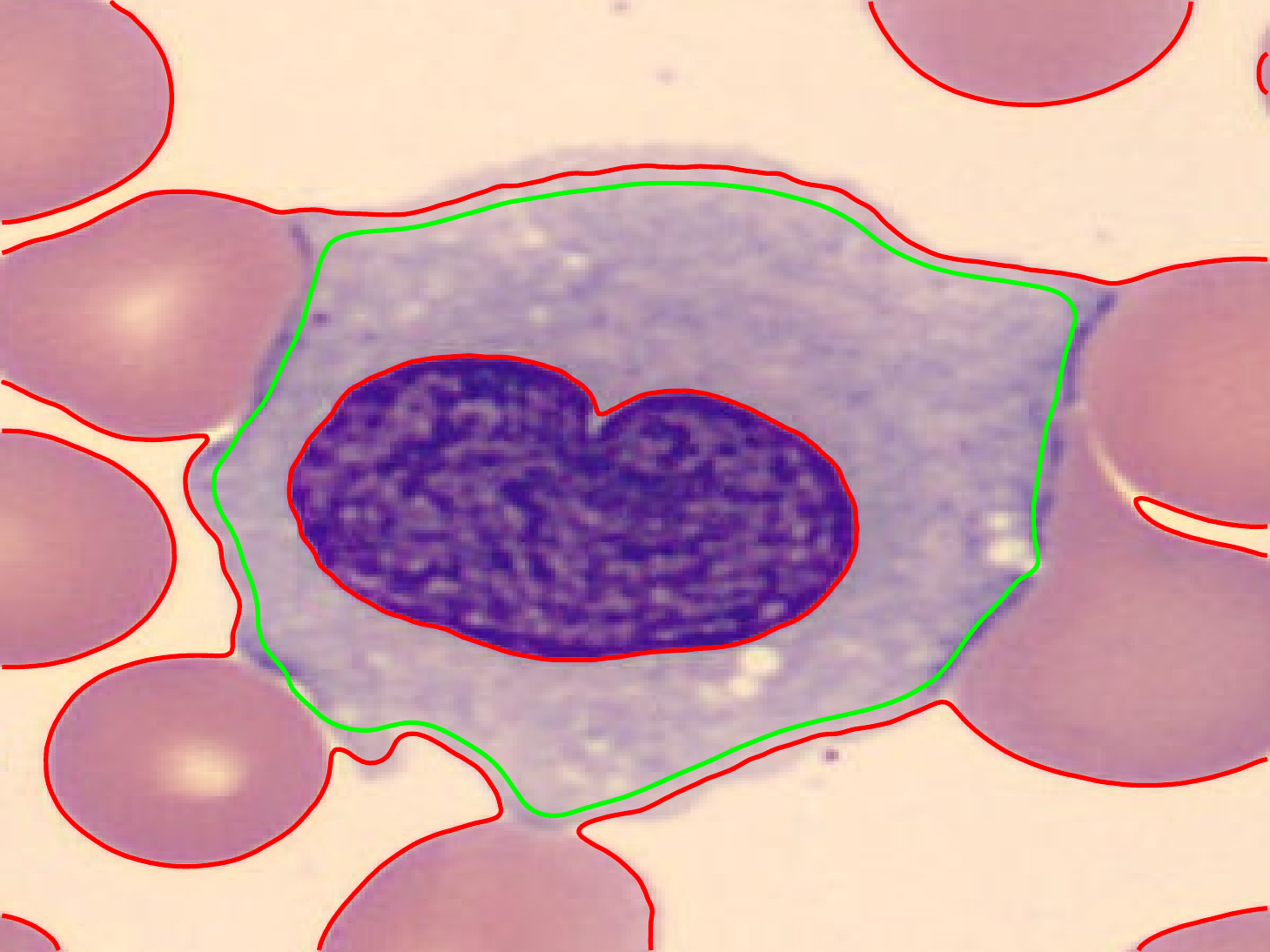}}\vspace{2mm}
    \fbox{\includegraphics[width=2.2cm,height = 1.8cm]{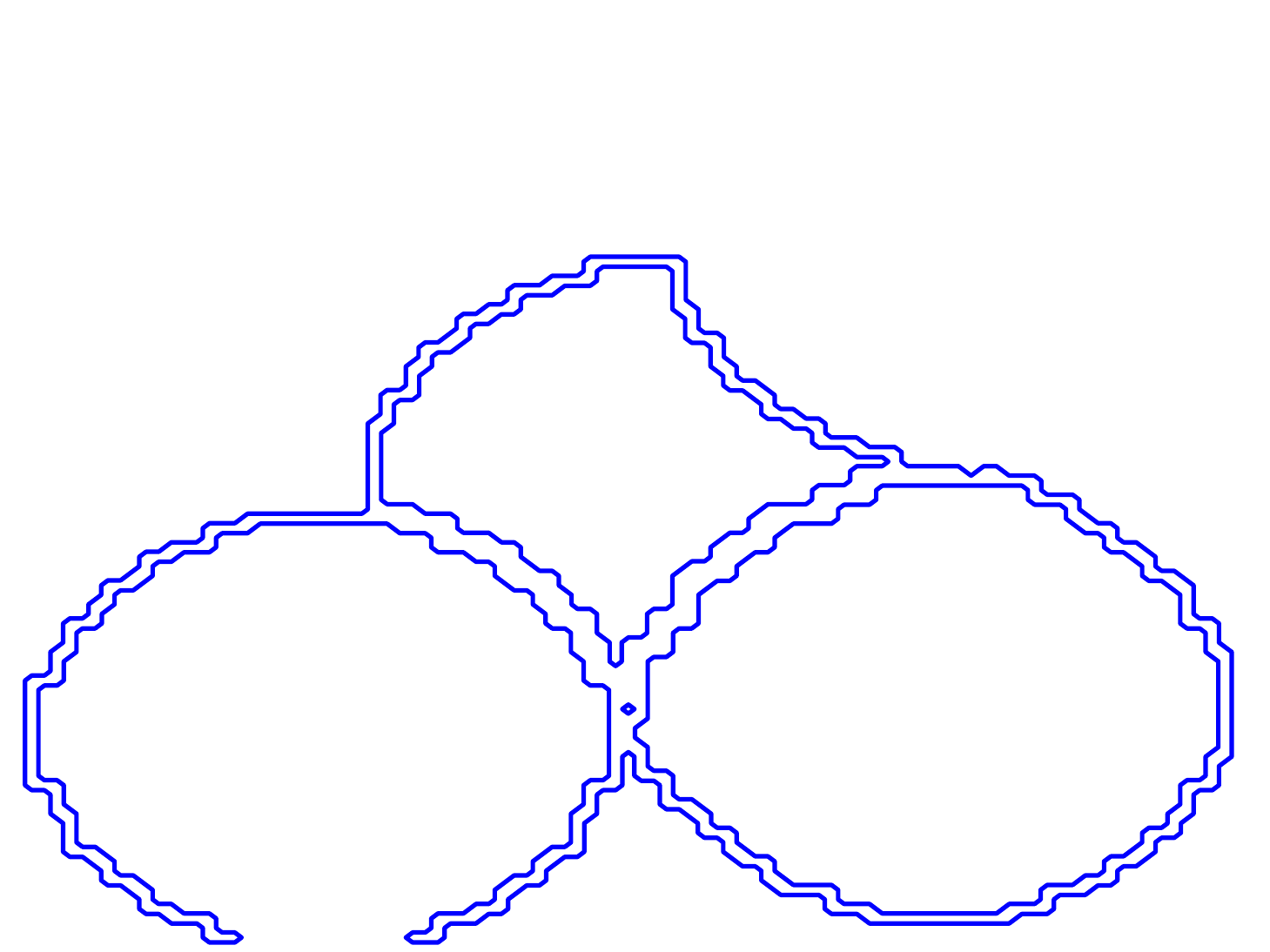}}\vspace{2mm}
	\end{minipage}
	}
\hspace{-11.8mm}
	\subfigure{
	\begin{minipage}[c]{0.2\textwidth}
    \fbox{\includegraphics[width=2.2cm,height = 1.8cm]{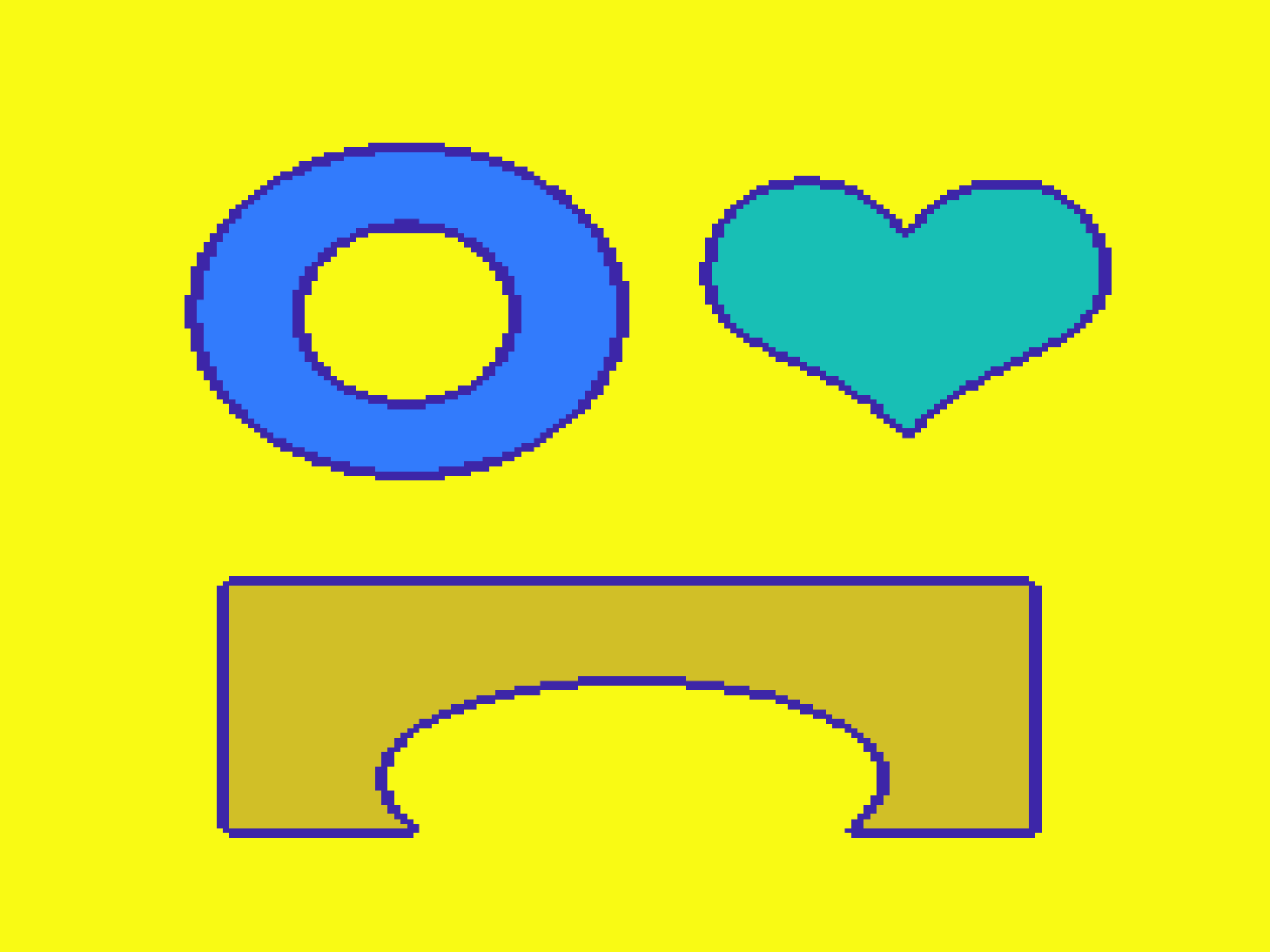}}\vspace{2mm}
   \fbox{\includegraphics[width=2.2cm,height = 1.8cm]{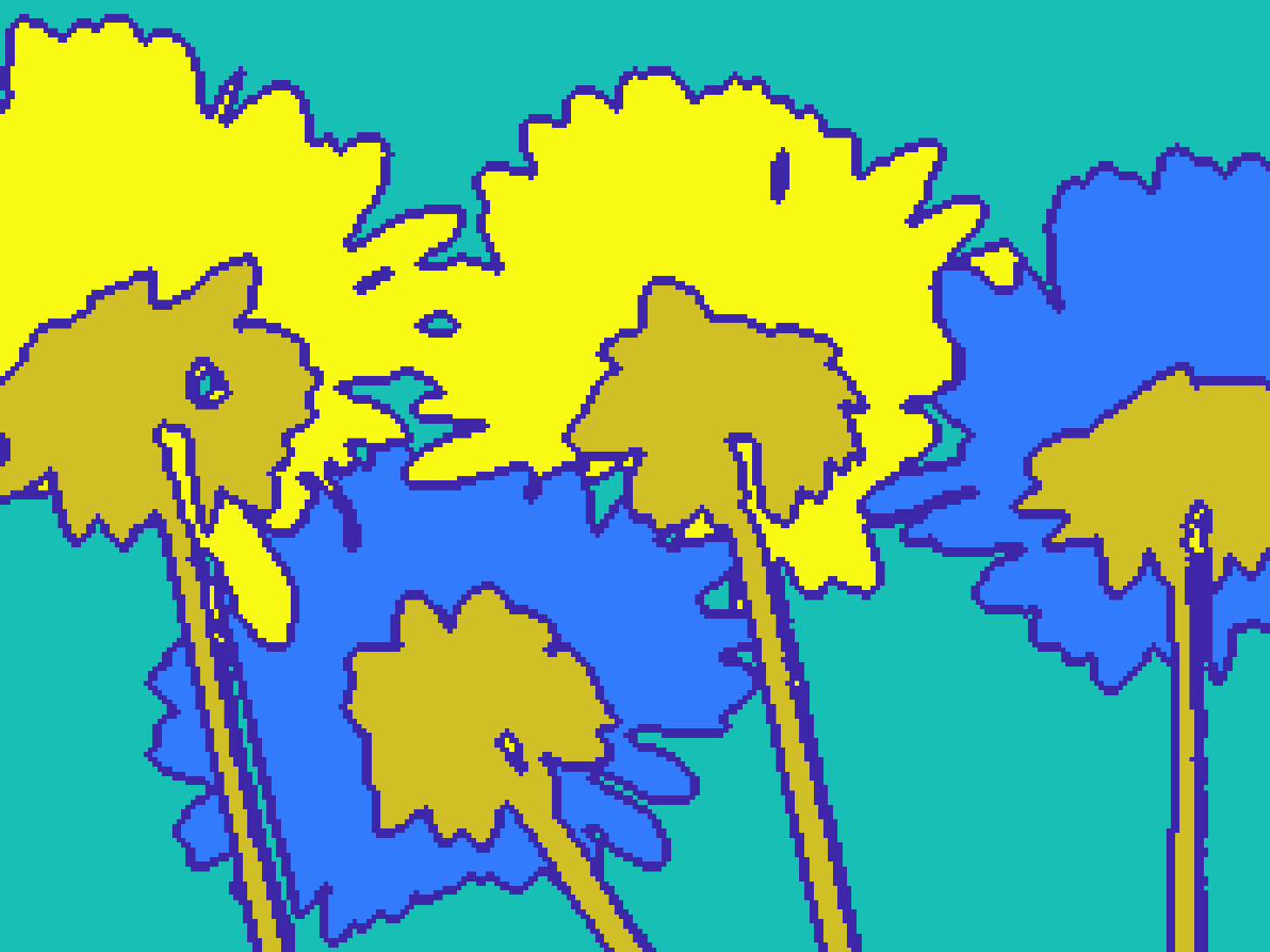}}\vspace{2mm}
   \fbox{\includegraphics[width=2.2cm,height = 1.8cm]{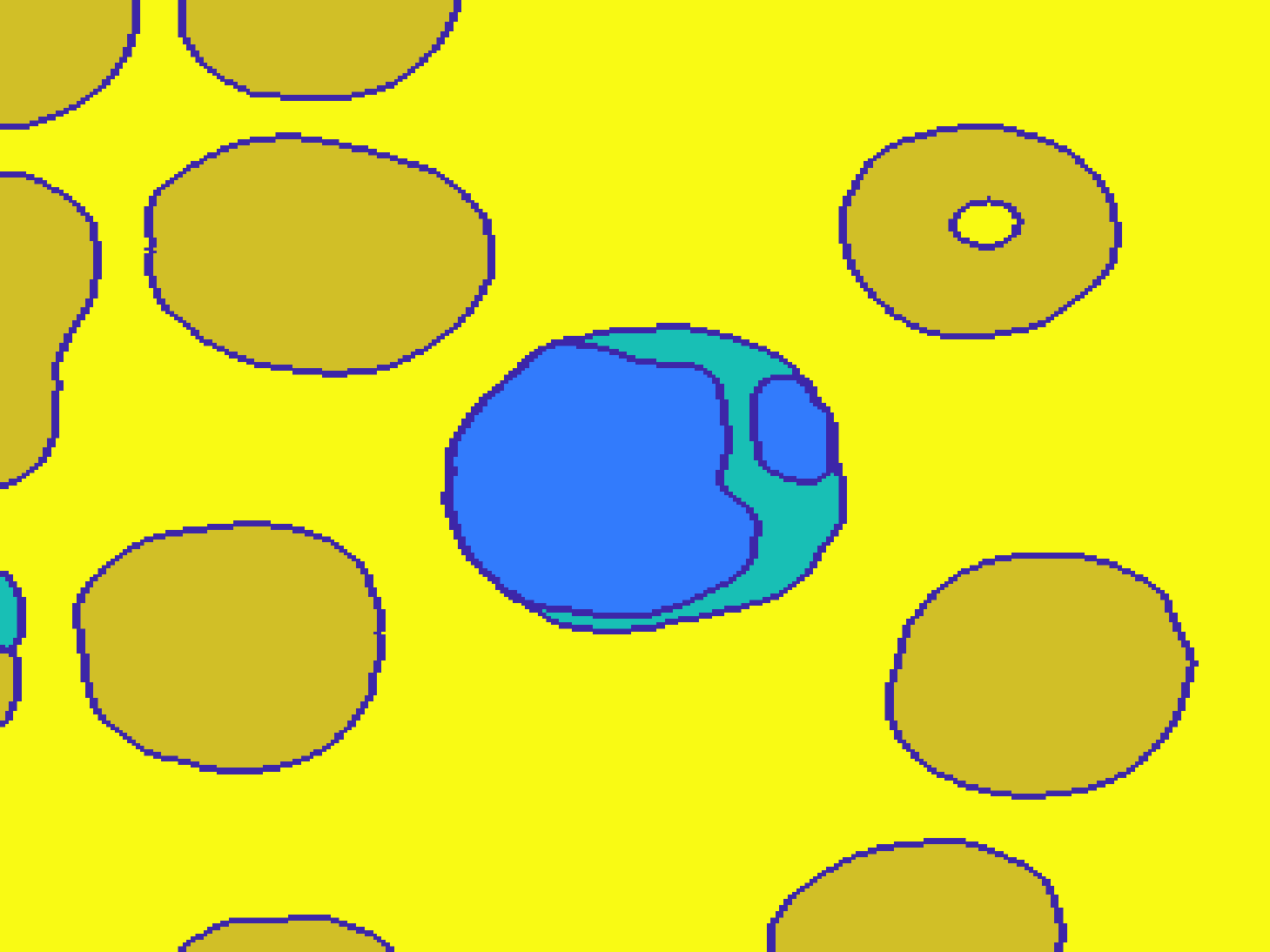}}\vspace{2mm}
   \fbox{\includegraphics[width=2.2cm,height = 1.8cm]{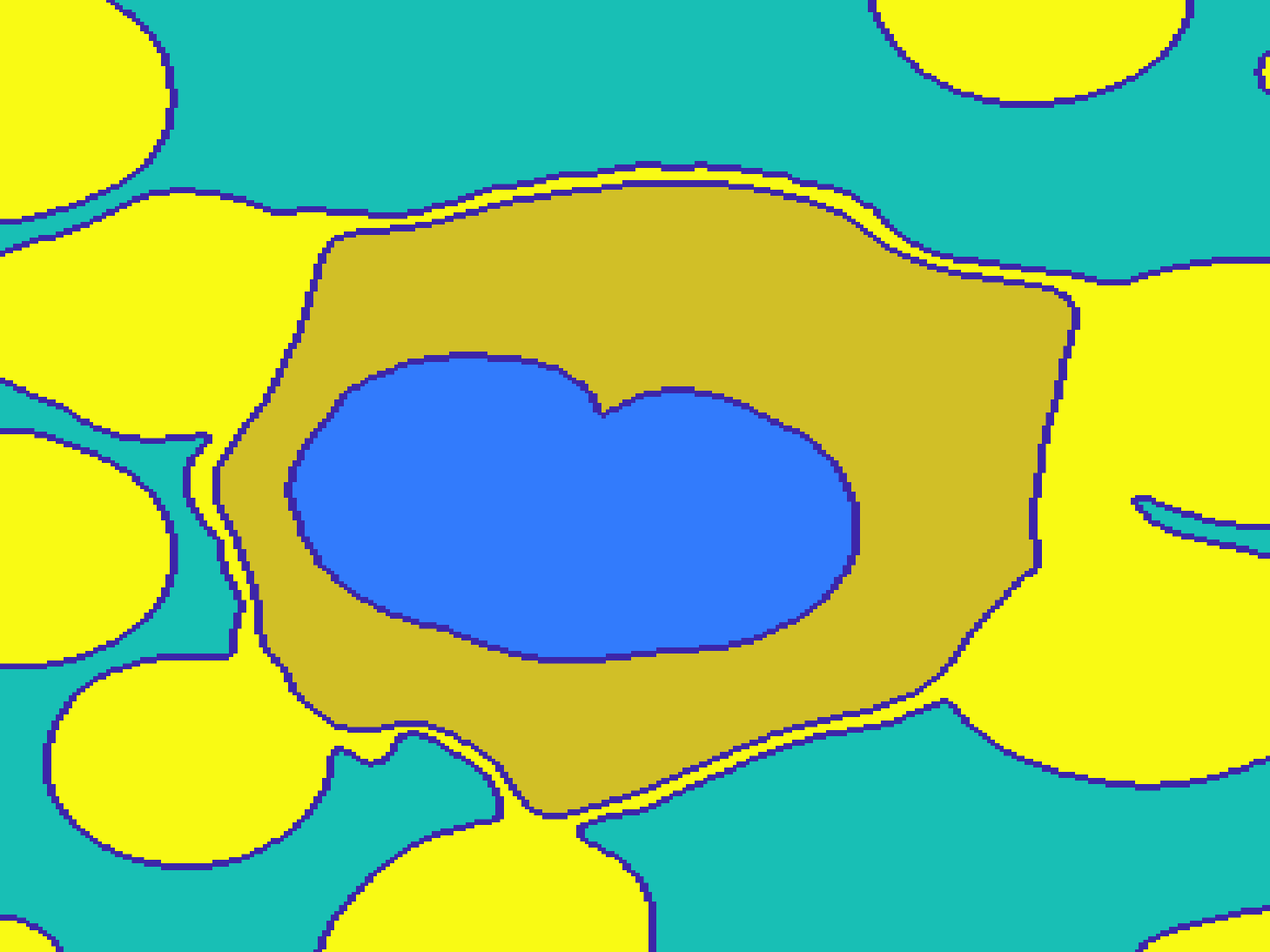}}\vspace{2mm}
   \fbox{\includegraphics[width=2.2cm,height = 1.8cm]{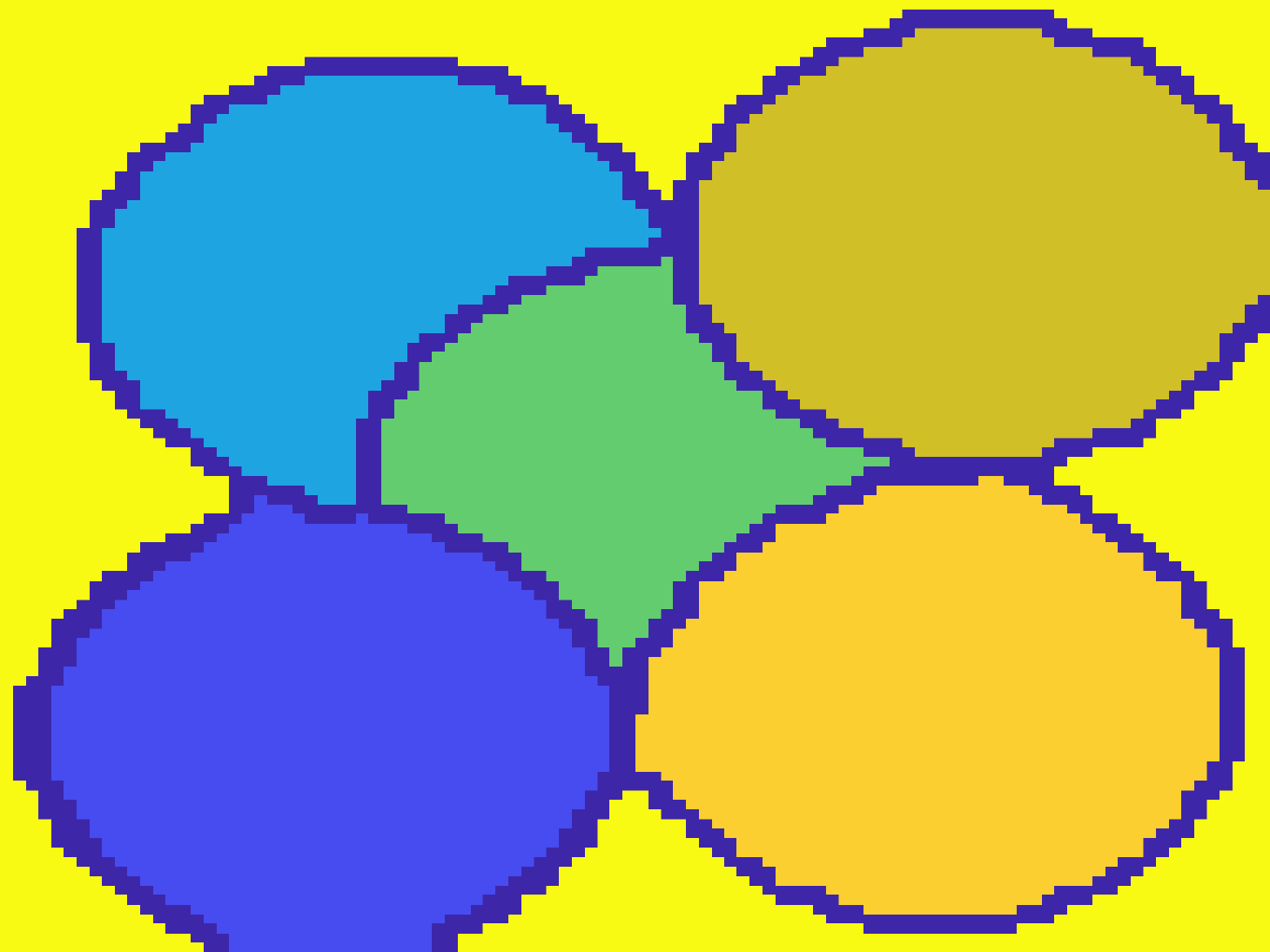}}\vspace{2mm}
	\end{minipage}
	}
}
\caption{ \label{fig:subsec2} The segmentation results for several images. From left to right: Original image, the initial contour of $U_1$, the initial contour of $U_2$, the contour results (the initial contour of $U_3$) and the segmentation results. }
\end{figure}

\begin{table}[!b]
	\begin{tabular}{lllp{1.2cm}lllll}
		\hline
		\multirow{2}{*}{Image}&
		\multicolumn{3}{c}{Multi-IGLIM} &\multicolumn{4}{c}{ACCV} \cr
		&$\kappa$ &$\sigma$ &$M$ &$\epsilon$ &$\lambda$ &$h$ &$S$\cr
		\hline
        { Synthetic} &50  &$ 0.05$  &5 &6 &40 & 0.3 &120 \cr%
		{ Flowers} &50  &$0.05$  &10  &6 &40 & 0.3 &120\cr
		{ WBC1} &50  &$0.01$  &1  &{8} &40 & 0.3 &70\cr
		{ WBC2} &50  &$0.01$  &1  &{8} &40 & 0.3 &70 \cr
		{ 6-phases} &50  &$0.05$  &1  &8 &40 & 0.5 &80\cr\hline

	\end{tabular}
	\label{tab:subsec2}\caption{Parameters setting for images in Figure \ref{fig:subsec2}.}
\end{table}

\subsection{Experiments on segmentation comparison } 
For illustrating the capability of our ACCV model, we will make comparisons with several prominent image segmentation models, including the Cahn-Hilliard model (CH) \cite{Yang2019_JSC}; Smoothing, Lifting and Thresholding model (SLaT) \cite{Cai2015JSC}; Iterative convolution-thresholding method (ICTM)  \cite{Wang_2017}, and convex K-means approach (CKA) \cite{Wu2021IET}. 4-phase and 6-phase image segmentations will be tested in Figure \ref{fig:WBC}. From top to bottom, the images ``flowers", ``4colors", ``WBC1", ``WBC2", ``6phases" are segmented respectively.

Before the illustration of the segmentation, we need to do some explanations as follows. The segmentation results of SLaT and CKA keep the same color as the original image so that some segmentation results look like the original images, such as the image ``4colors" and ``6phases". For the remaining models, we have filled the segmentation results with other colors after the segmentation to distinguish. The recorded CPU time in Table \ref{tab:CPU_time_comparison} is obtained by the average of 10 times segmentations. Since the chosen models also have some crucial parameters which will directly determine the behavior of the segmentation method, we have tried our best to find the optimal parameters to balance the segmentation result and the CPU time, as we did for our ACCV model.

Considering the segmentation result and the CPU time, it can be seen that the CH model takes much longer than the other models. Since the Cahn-Hilliard equation is a fourth-order PDE, the simulation will be more time-consuming. And for segmentation of more than four phases, it is not satisfactory. Except for the image ``flowers", the SLaT model finishes the segmentation in the least time among all models. For the image of white blood cells, the segmentation shows double edges around the cell membrane of the red blood cell due to the low resolution. Since the image `` flowers " has texture in each partition rather than a homogeneous color block, it is the reason that the SLaT model needs more time for ``flowers '' than the other images. ICTM achieves satisfactory segmentations for all images, although it costs a little longer in the images of the white blood cell. The CKA fails to separate the images ``WBC1" and ``WBC2", compared with our ACCV model. Through experiments, we summarize that the ACCV model is generally applicable for various images except for the image with many details. And the efficiency of the ACCV model is comparable to the above prominent models.


\begin{figure}[!t]
\resizebox{\textwidth}{!}{\hspace{11.8mm}
	\centering
	\subfigure{
	\begin{minipage}[c]{0.2\textwidth}
	\includegraphics[width=2.0cm]{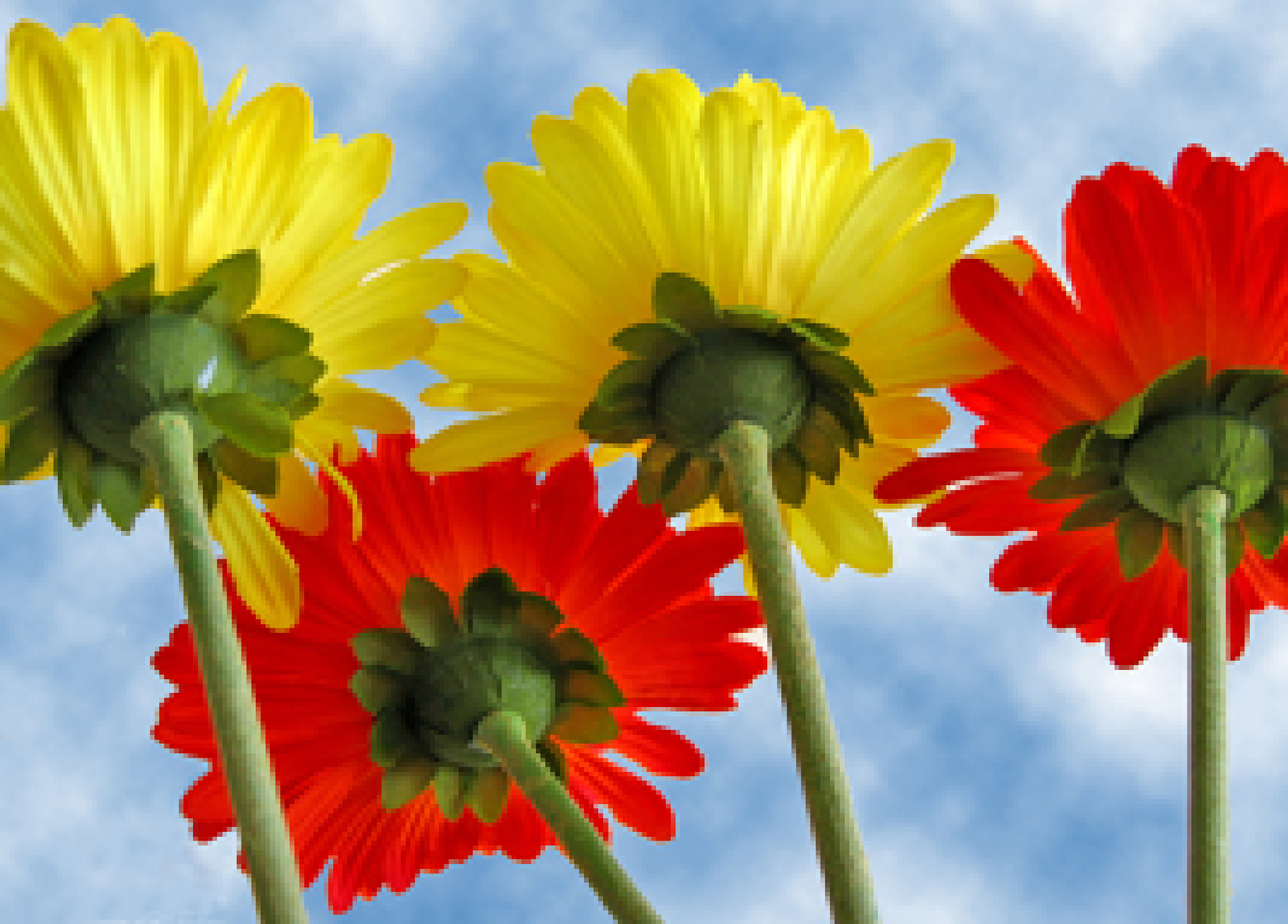}\vspace{1mm} \vspace{1mm} 	
	\includegraphics[width=2.0cm]{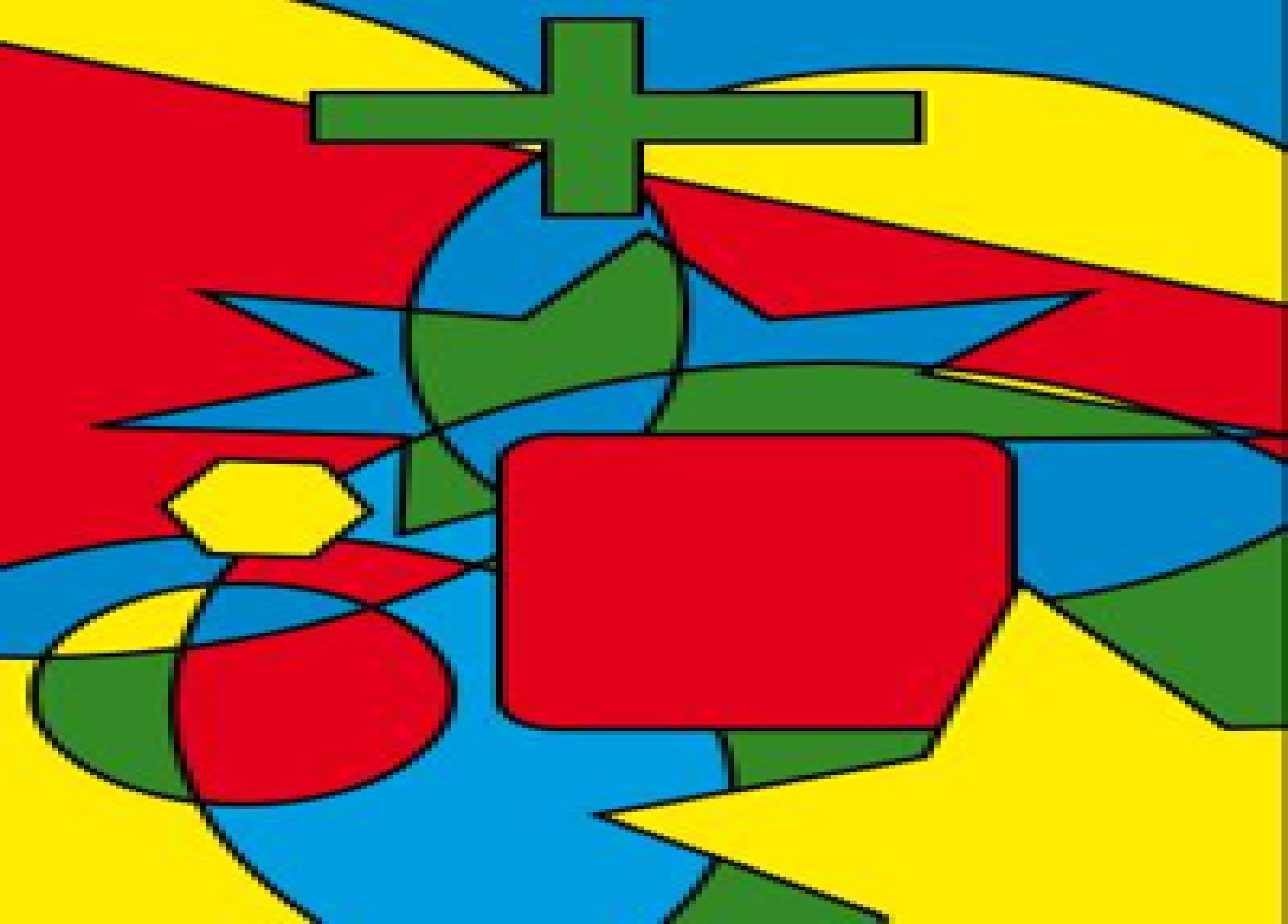} \vspace{1mm}  
    \includegraphics[width=2.0cm]{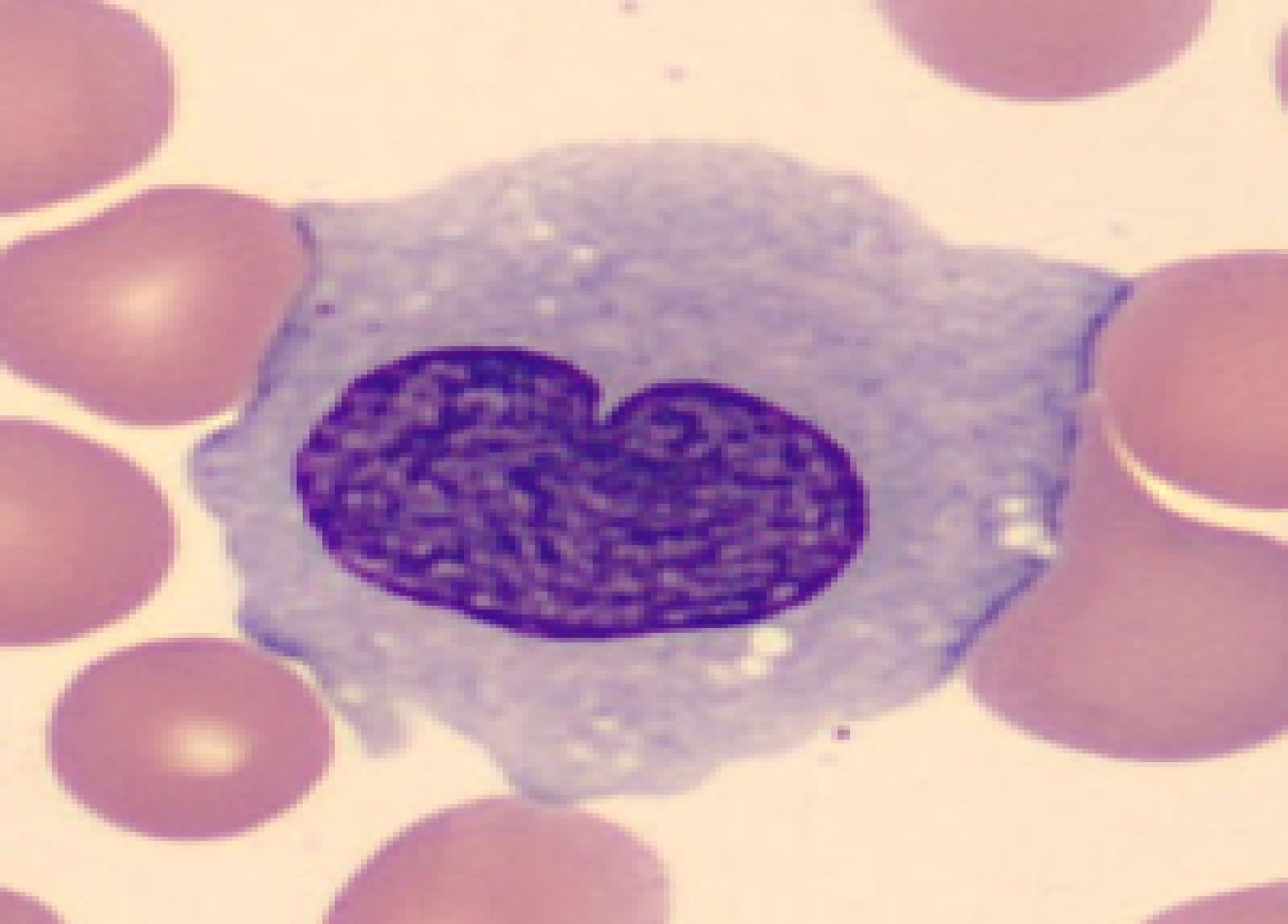}\vspace{0.5mm}	
	\includegraphics[width=2.0cm]{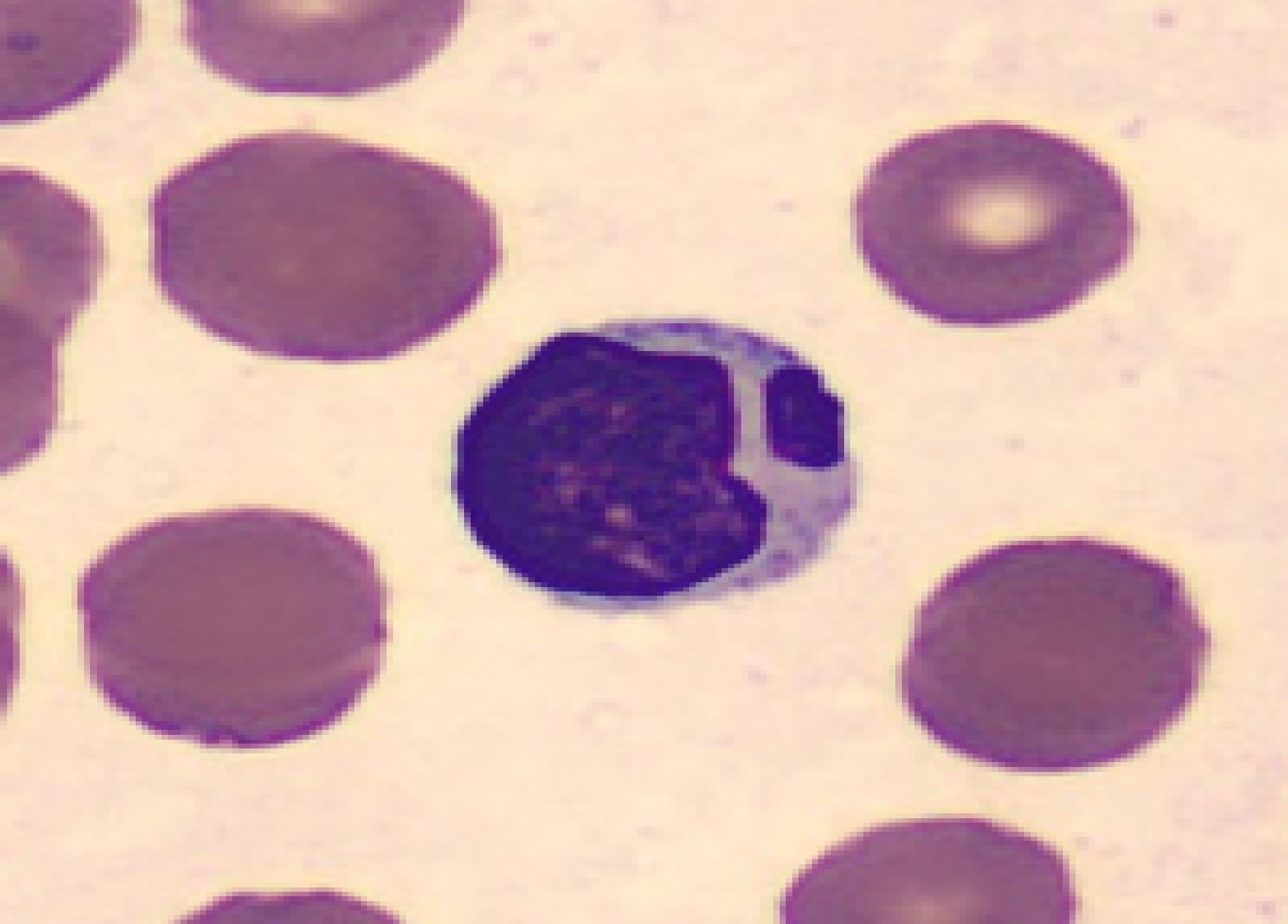} \vspace{1mm}\vspace{1mm}
\includegraphics[width=2.0cm]{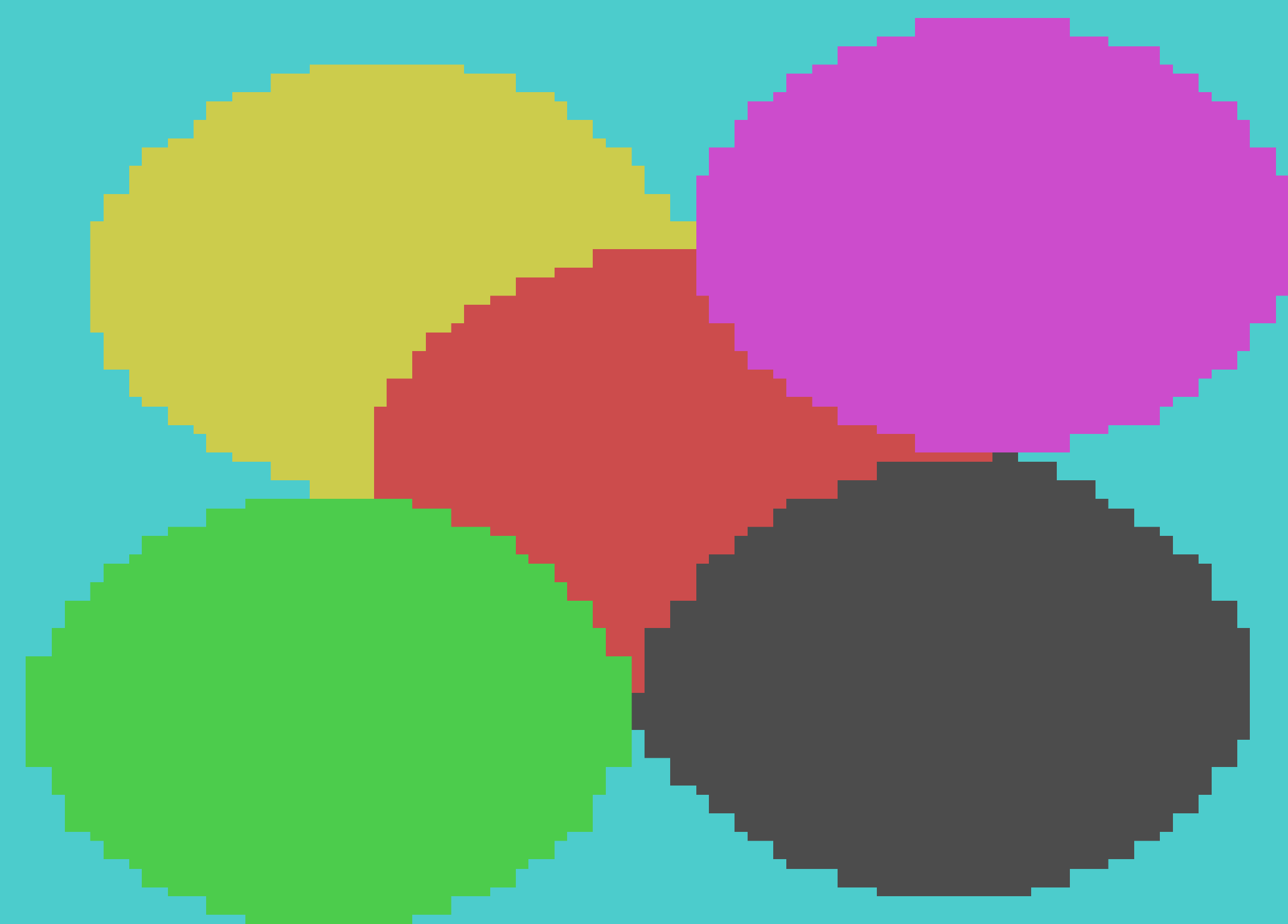} \vspace{1mm}
	\end{minipage}
	}
  \hspace{-11.8mm}
	\subfigure{
	\begin{minipage}[c]{0.2\textwidth}\vspace{1mm}
    \includegraphics[width=2.08cm]{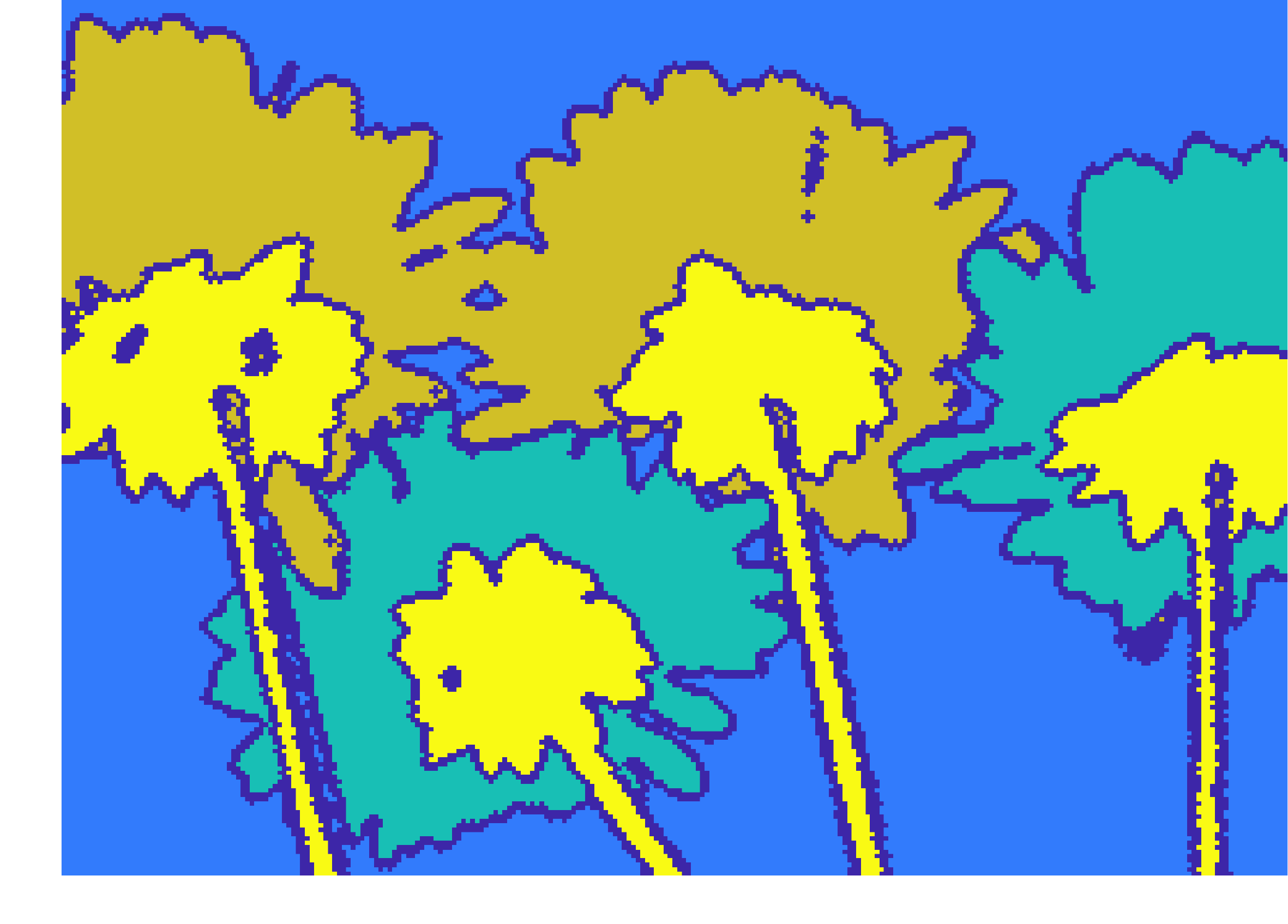}\vspace{0.5mm}
	\includegraphics[width=2.08cm]{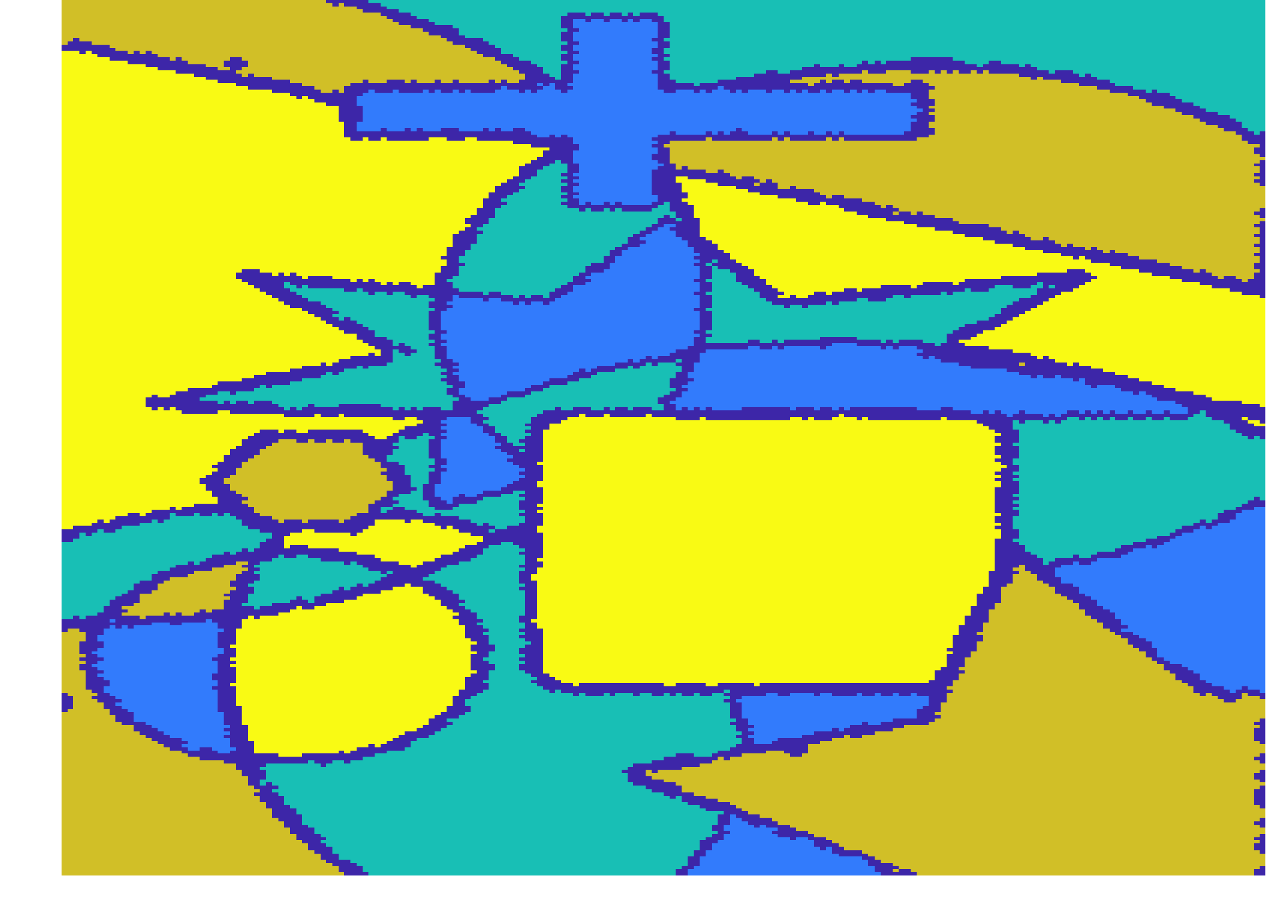}\vspace{0.5mm}
\includegraphics[width=2.08cm]{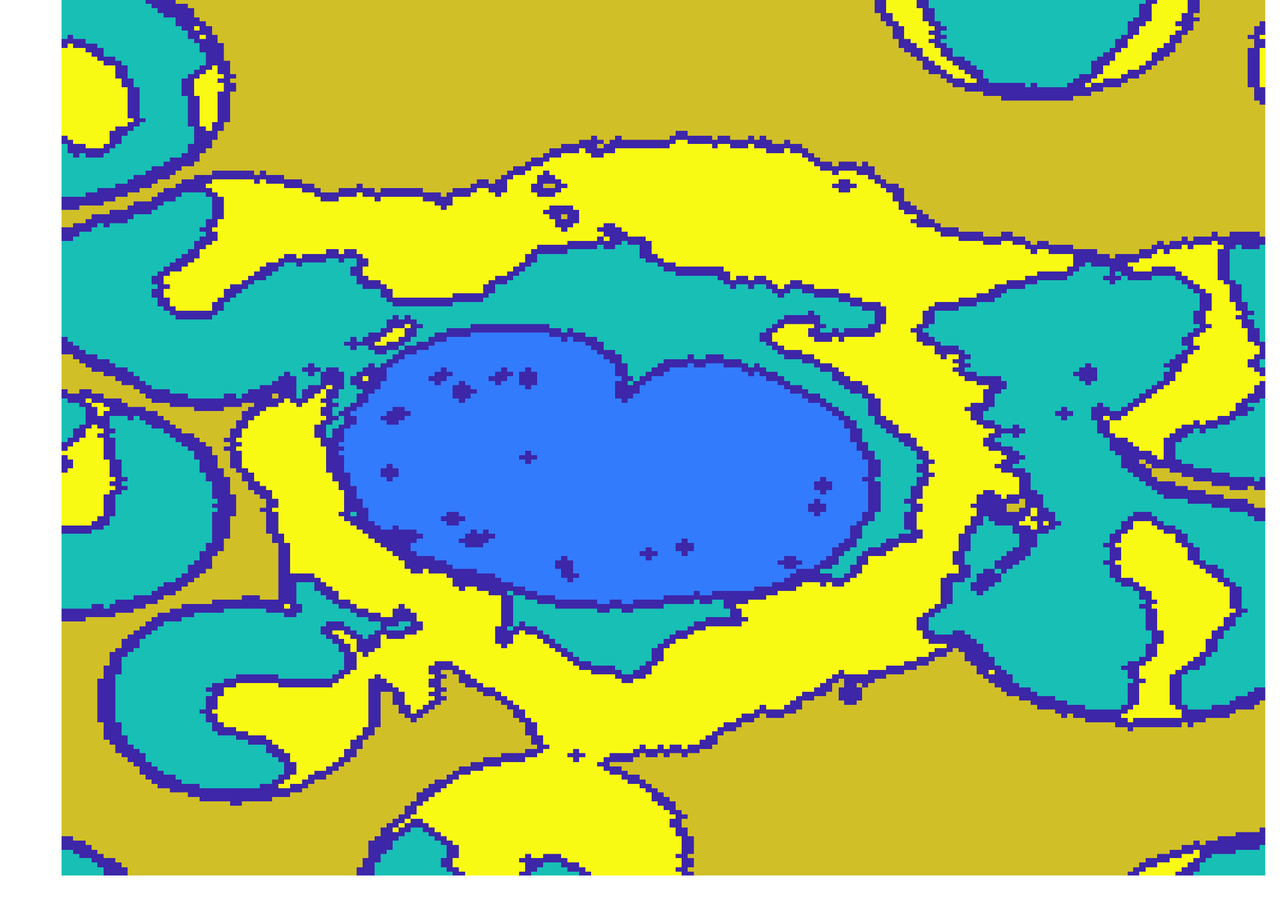}\vspace{1mm}
	\includegraphics[width=2.08cm]{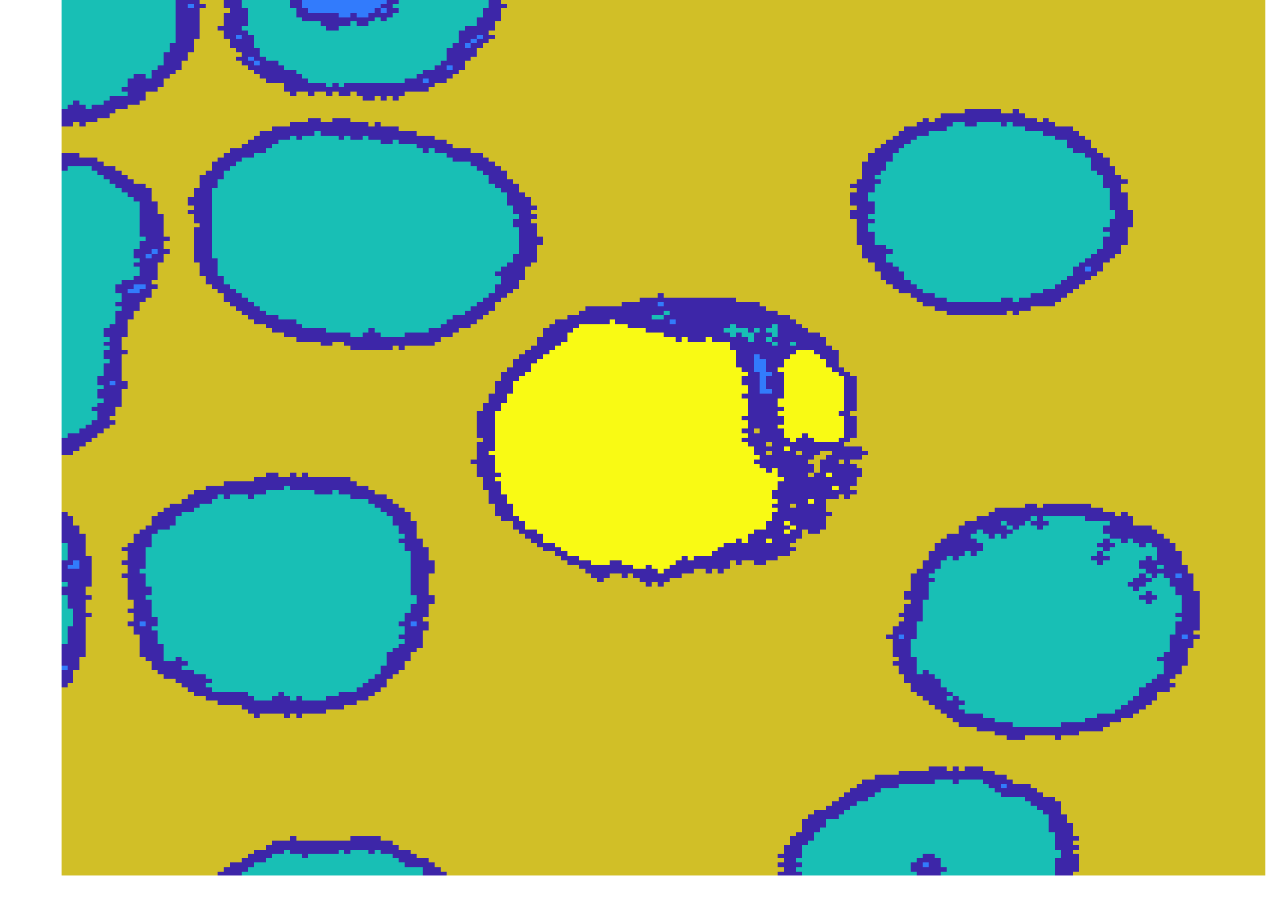}\vspace{1mm}
\includegraphics[width=2.08cm]{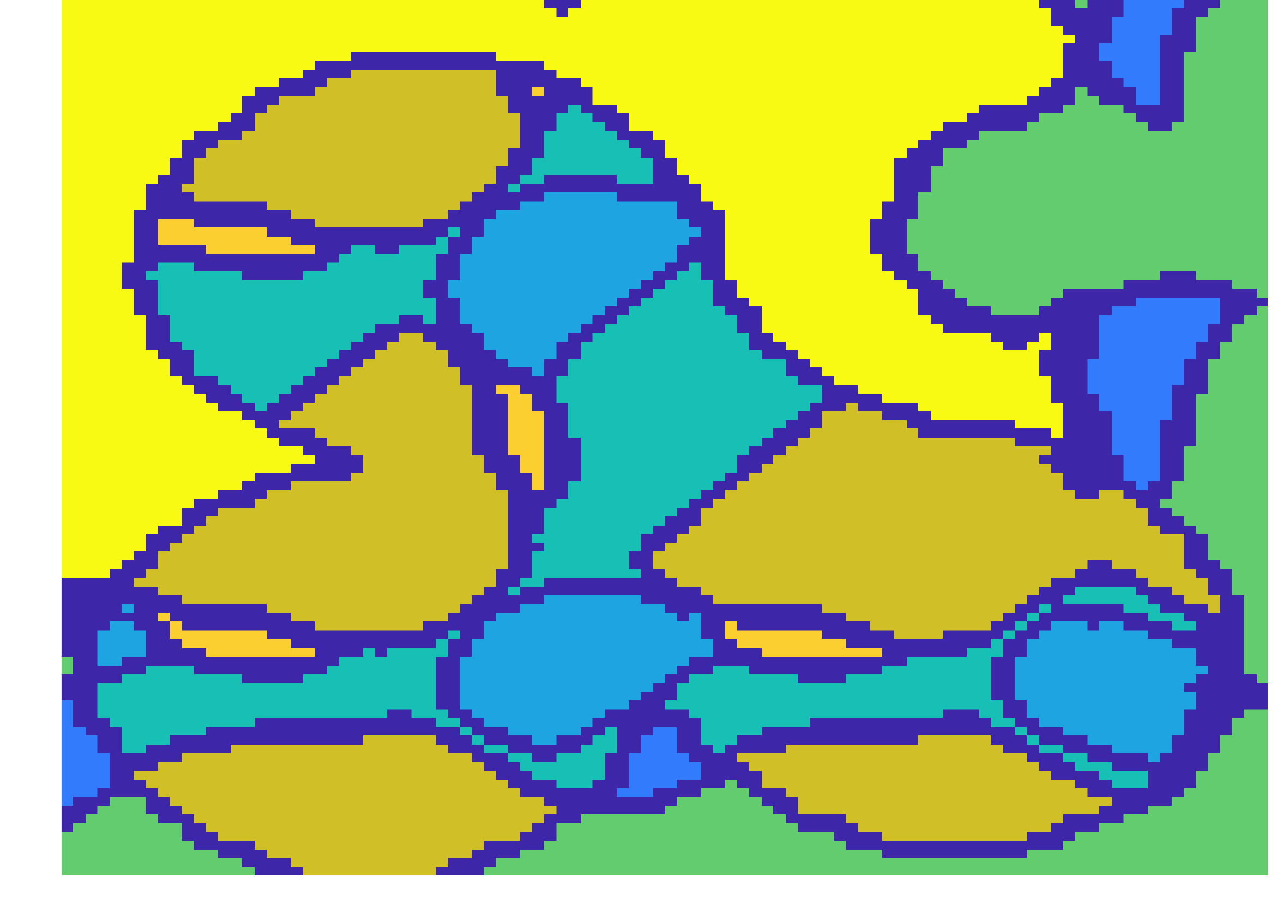}\vspace{1mm}
	\end{minipage}
	}
\hspace{-11.8mm}
	\subfigure{
	\begin{minipage}[c]{0.2\textwidth}
    \includegraphics[width=2.0cm]{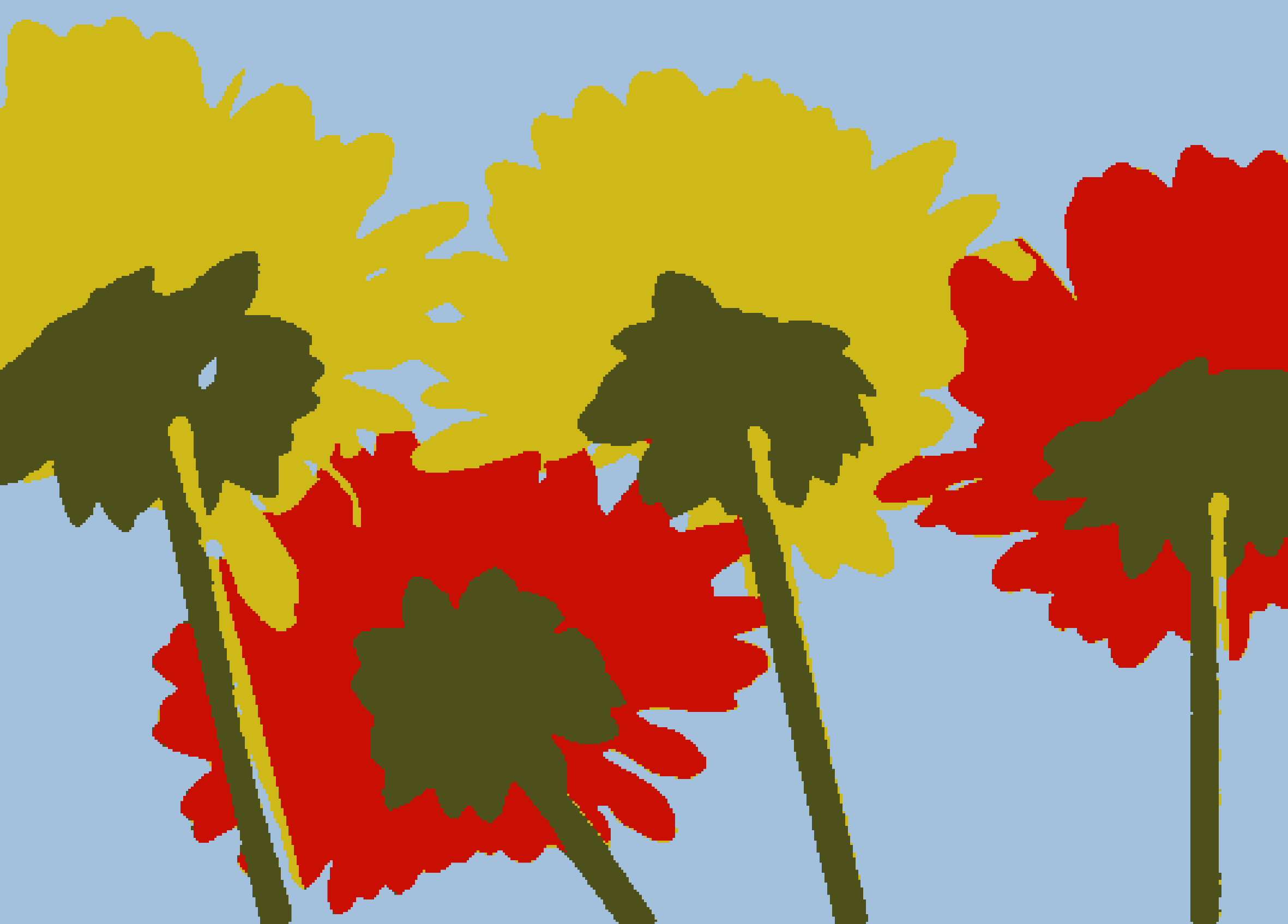}\vspace{1mm}
	\includegraphics[width=2.0cm]{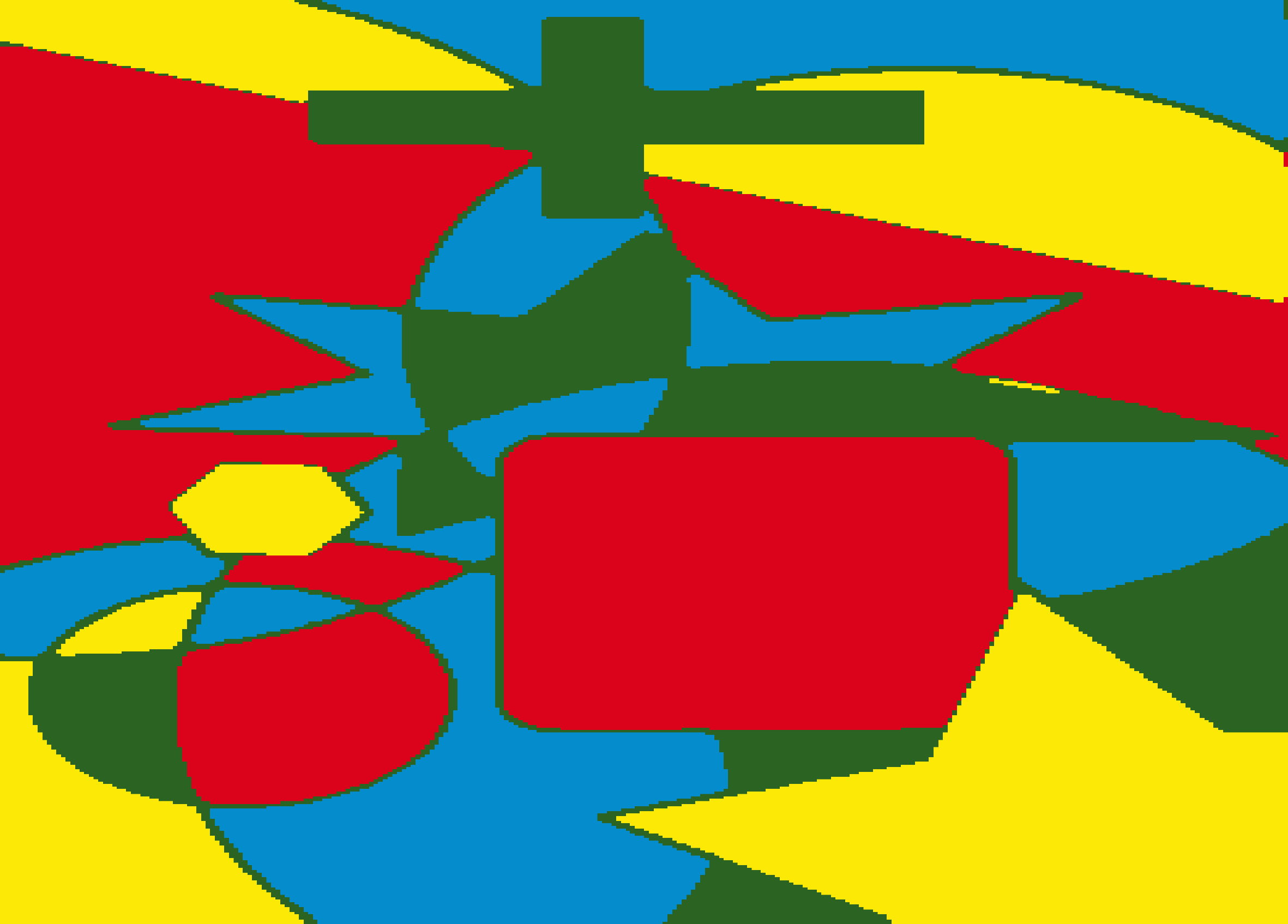} \vspace{1mm}
    \includegraphics[width=2.0cm]{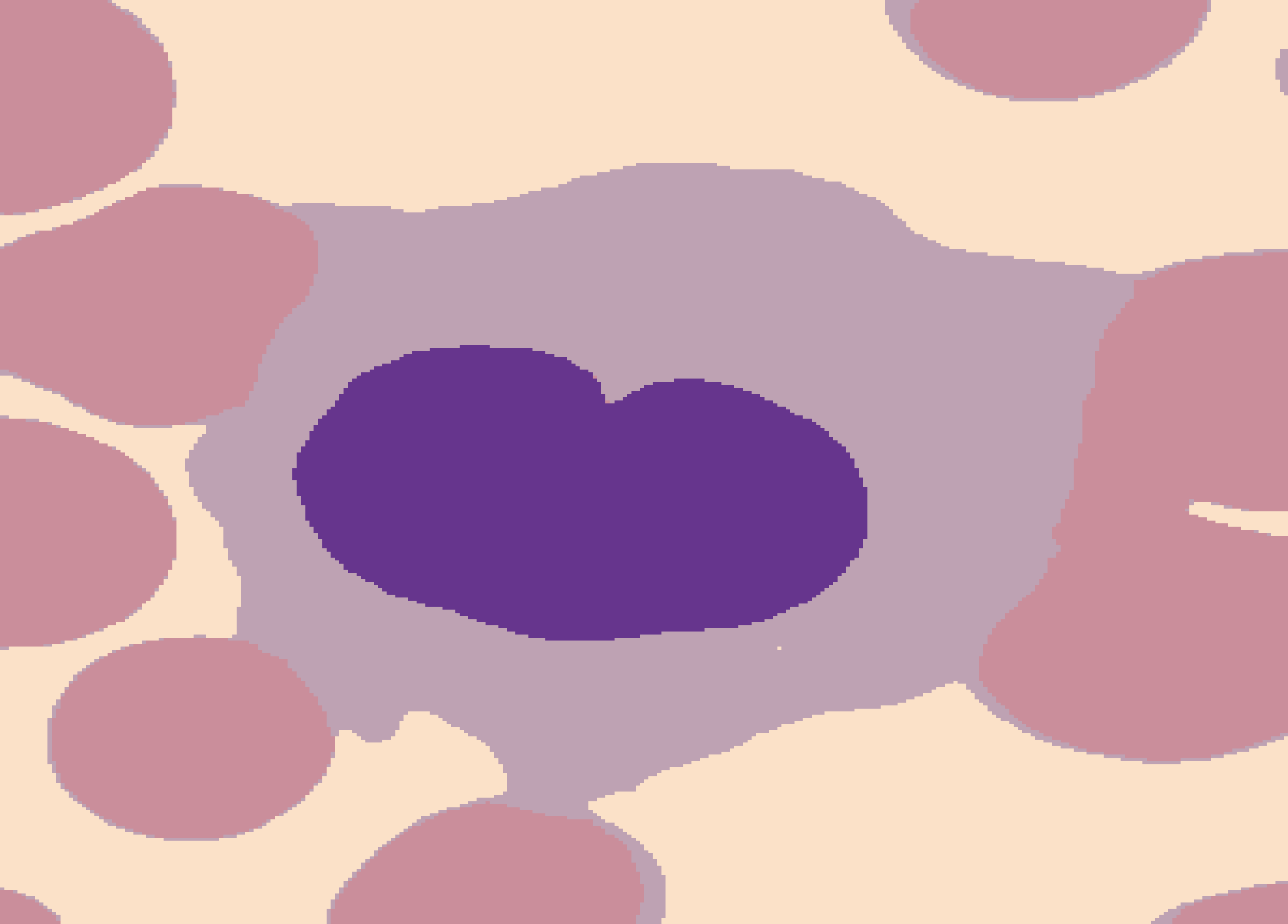}\vspace{1mm}
	\includegraphics[width=2.0cm]{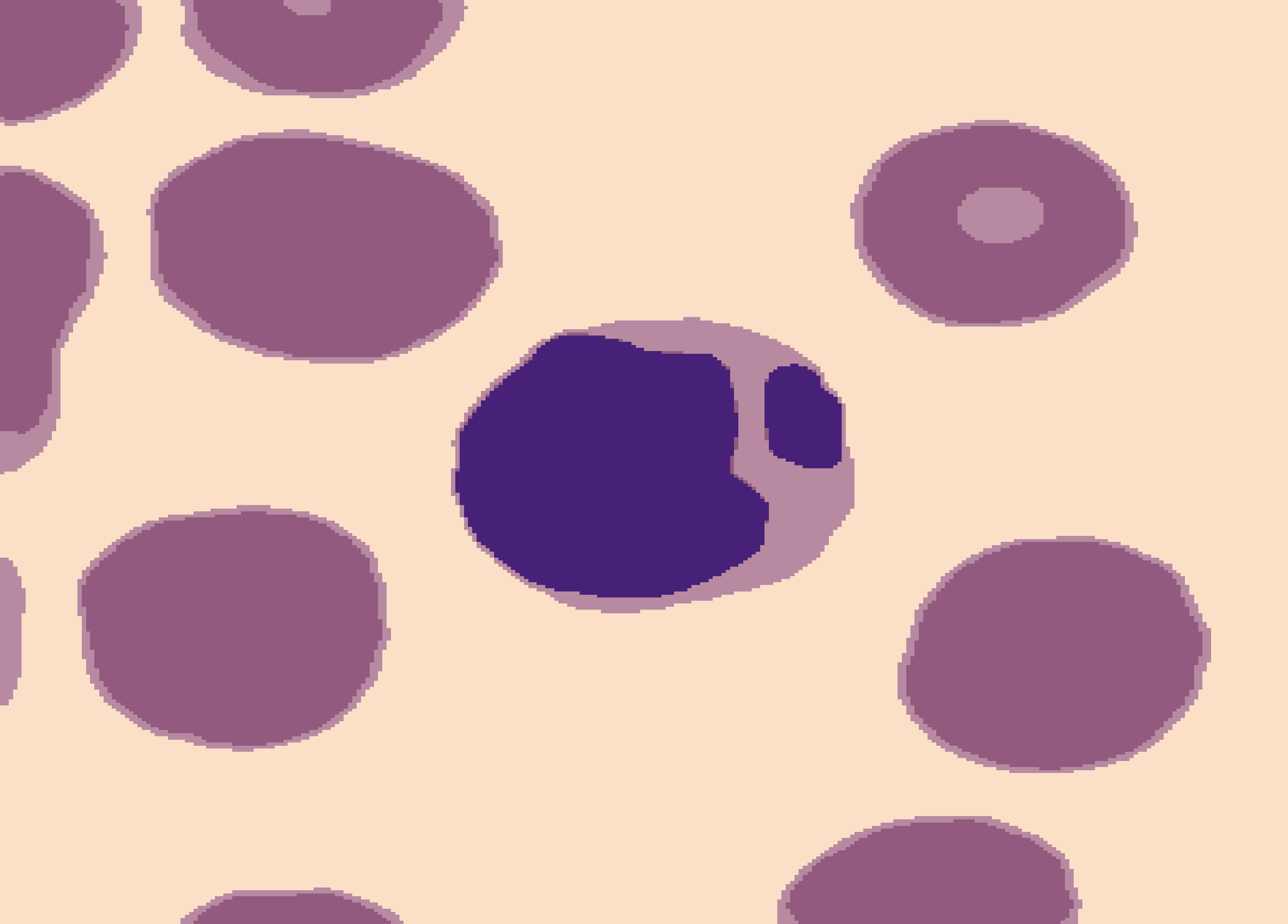} \vspace{1mm}\vspace{1mm}
 \includegraphics[width=2.0cm]{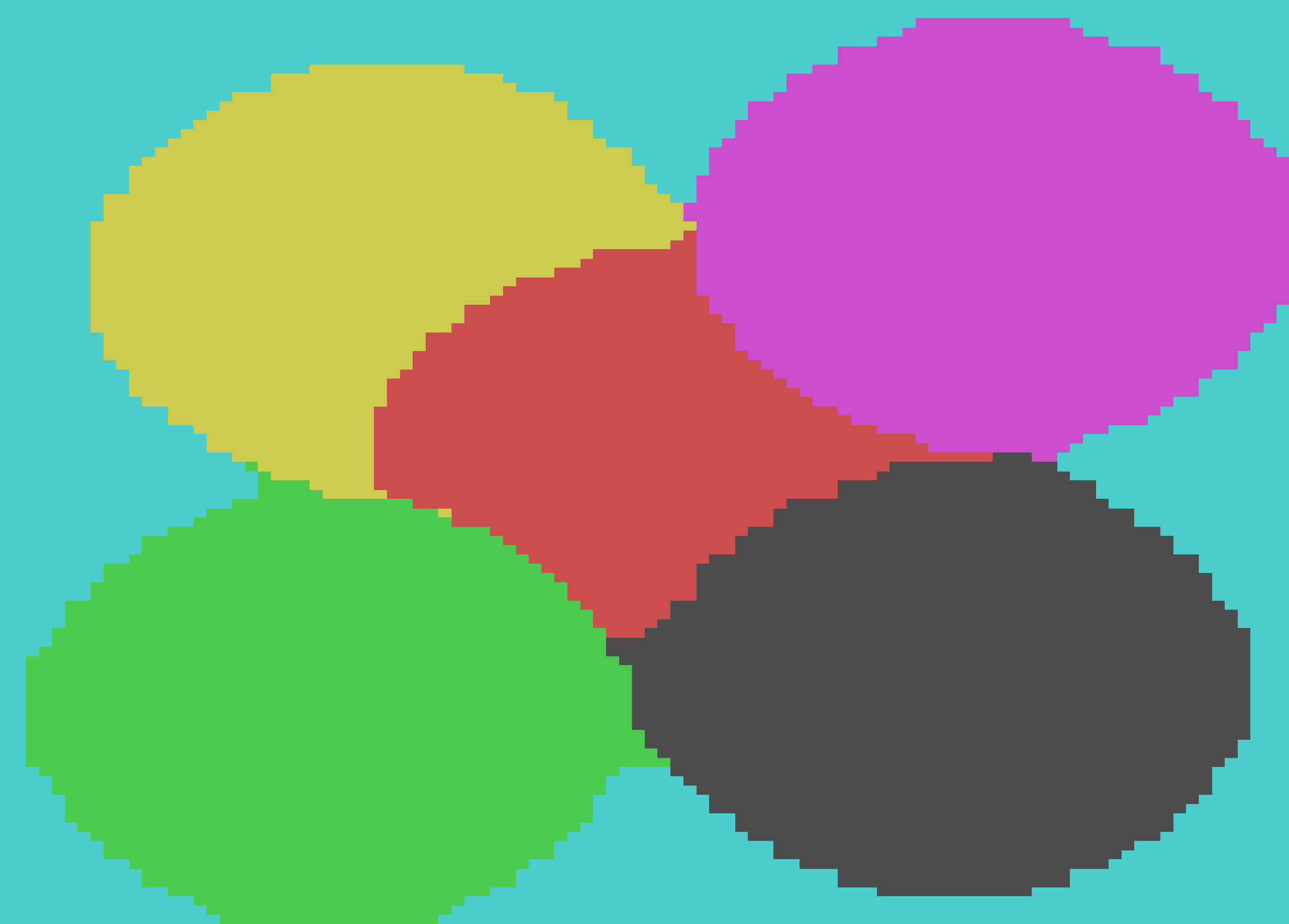} \vspace{1mm}
	\end{minipage}
	}
  \hspace{-11.8mm}
	\subfigure{
	\begin{minipage}[c]{0.2\textwidth}
    \includegraphics[width=2.0cm]{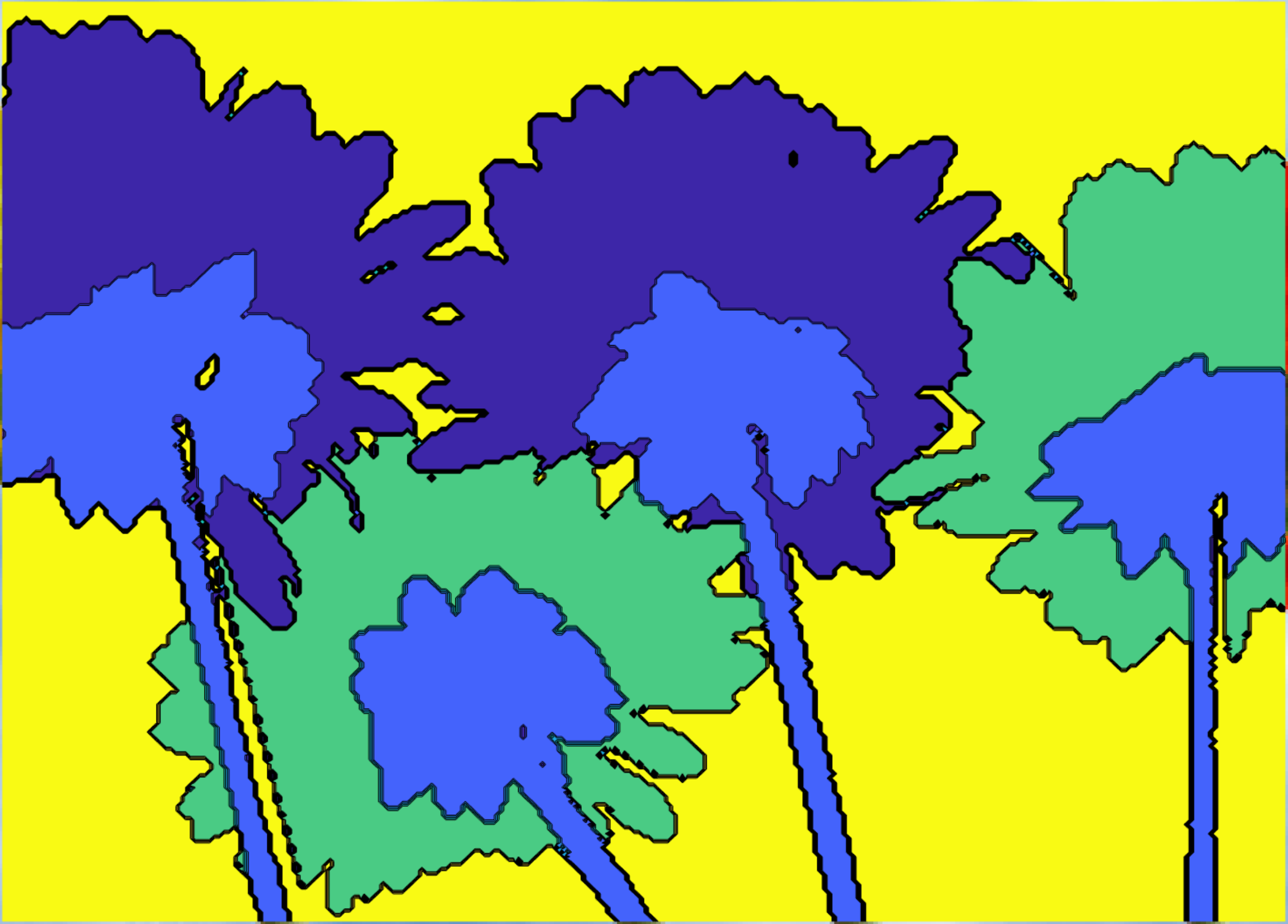}\vspace{1mm}
	\includegraphics[width=2.0cm]{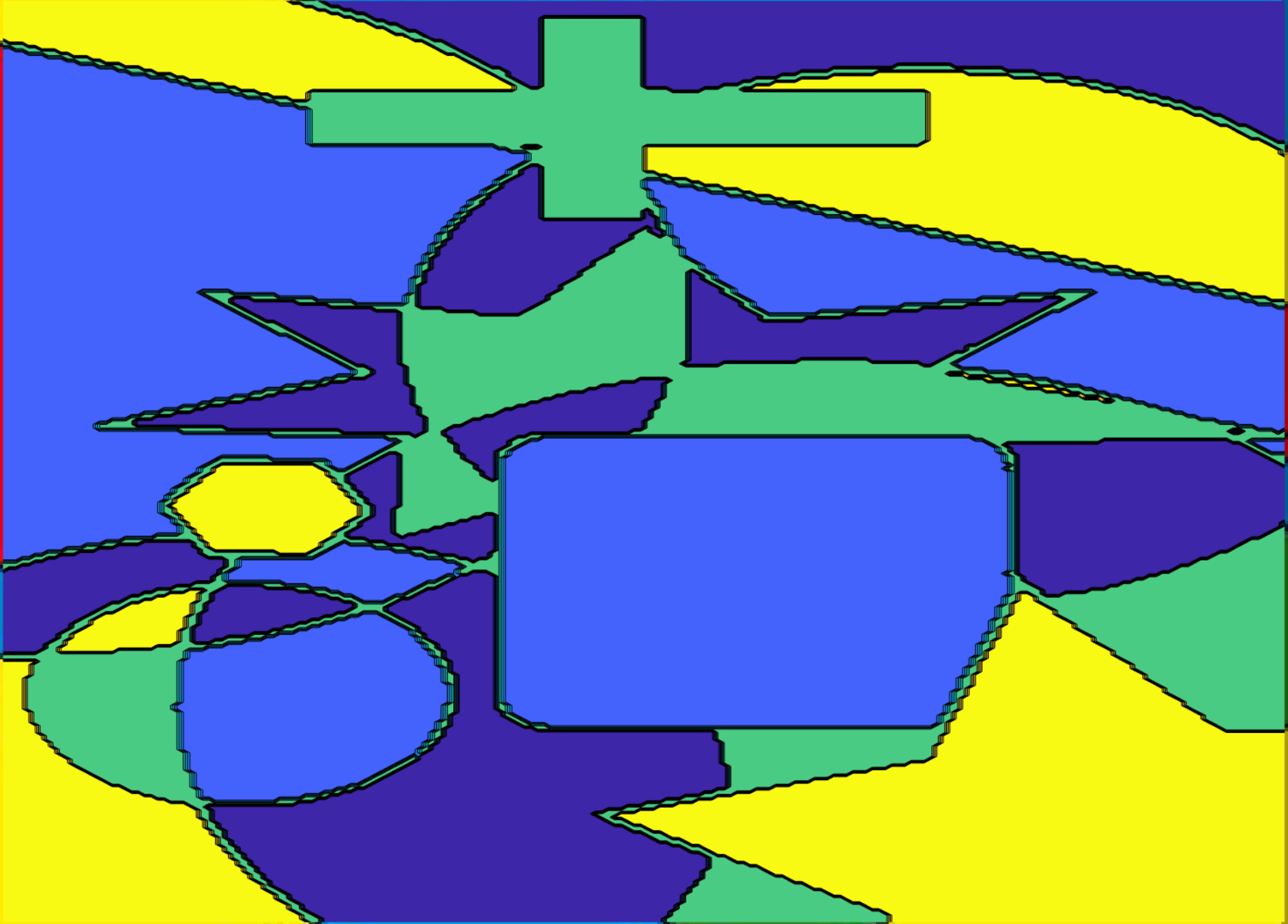}\vspace{1mm}
    \includegraphics[width=2.0cm]{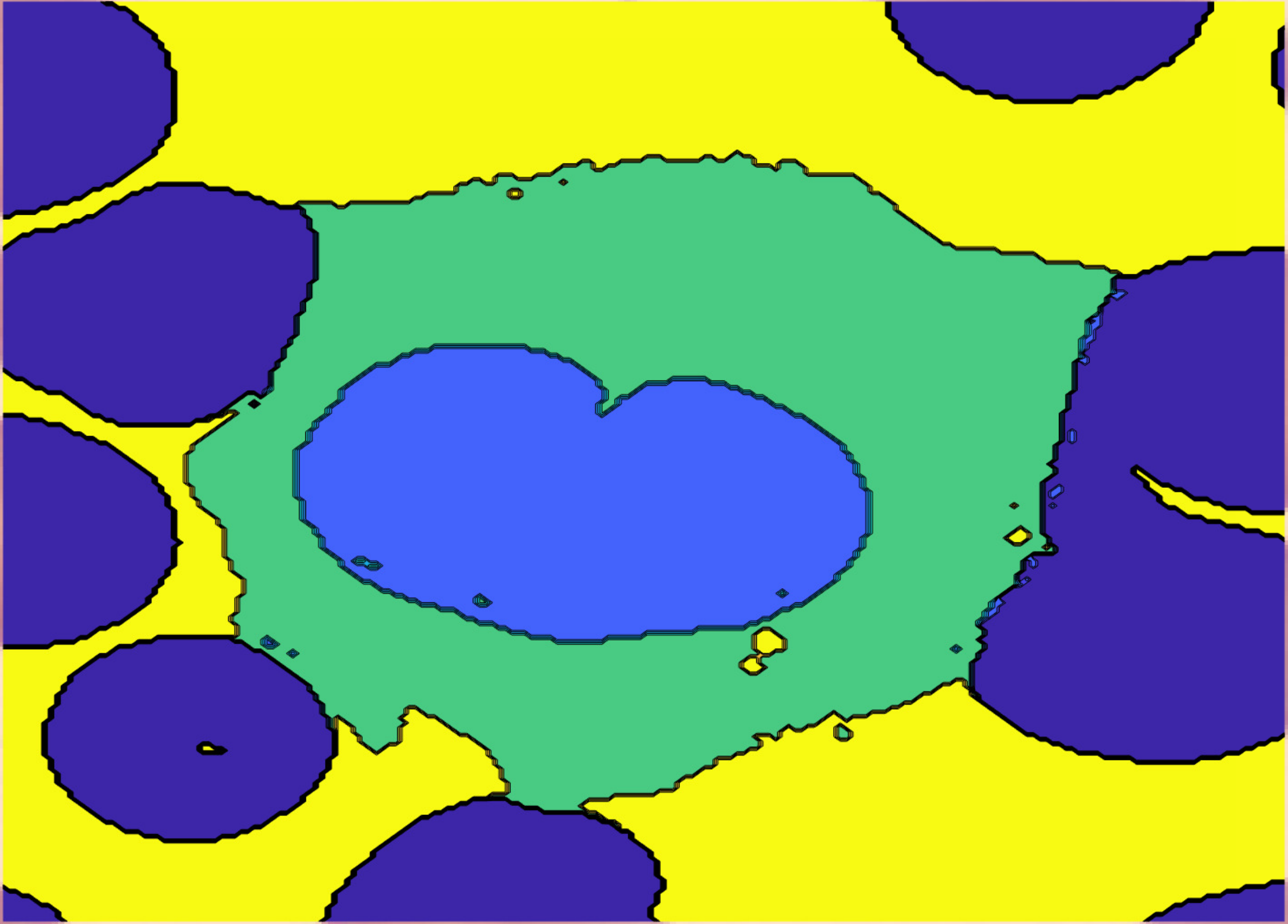}\vspace{1mm}
	\includegraphics[width=2.0cm]{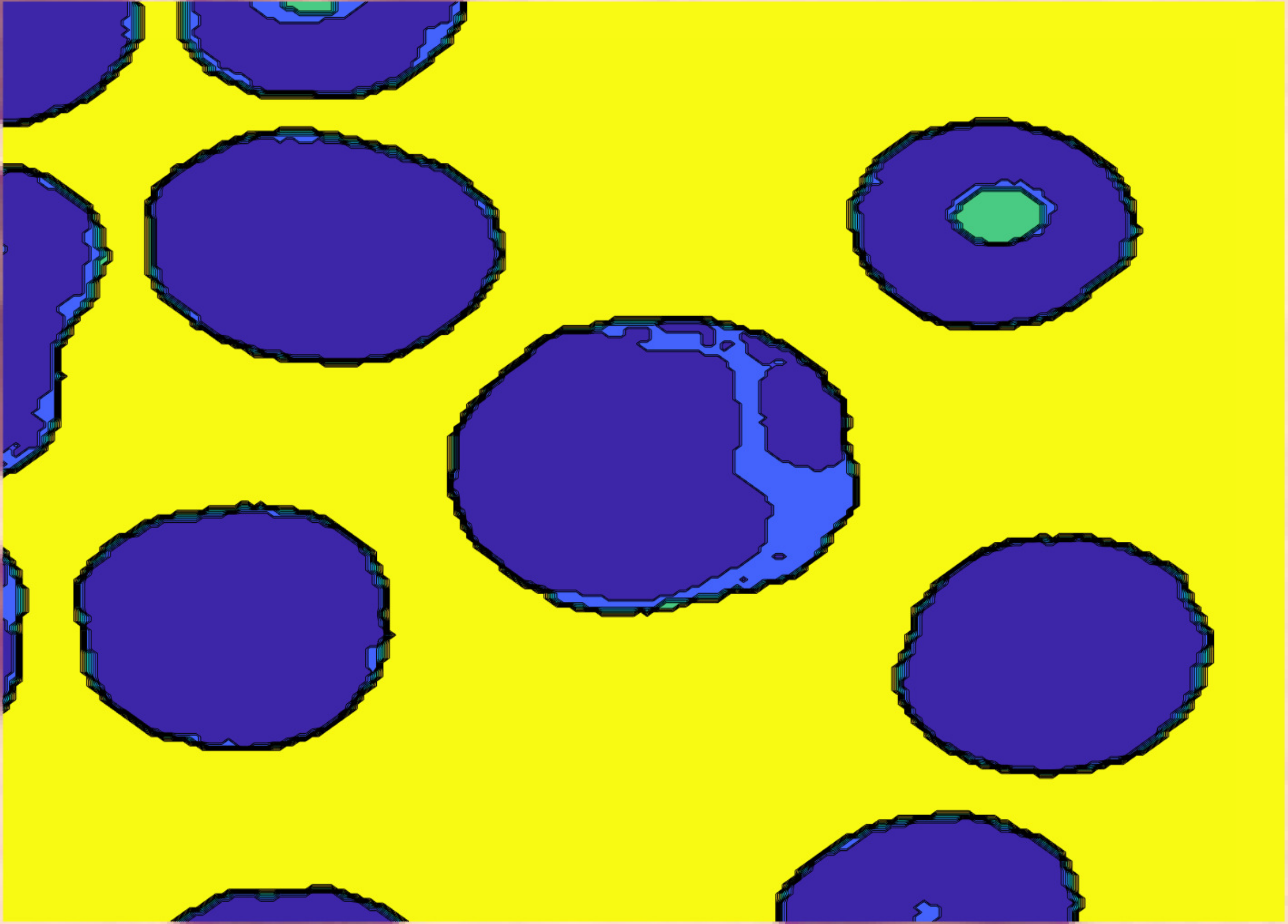}\vspace{1mm}\vspace{1mm}
    \includegraphics[width=2.09cm]{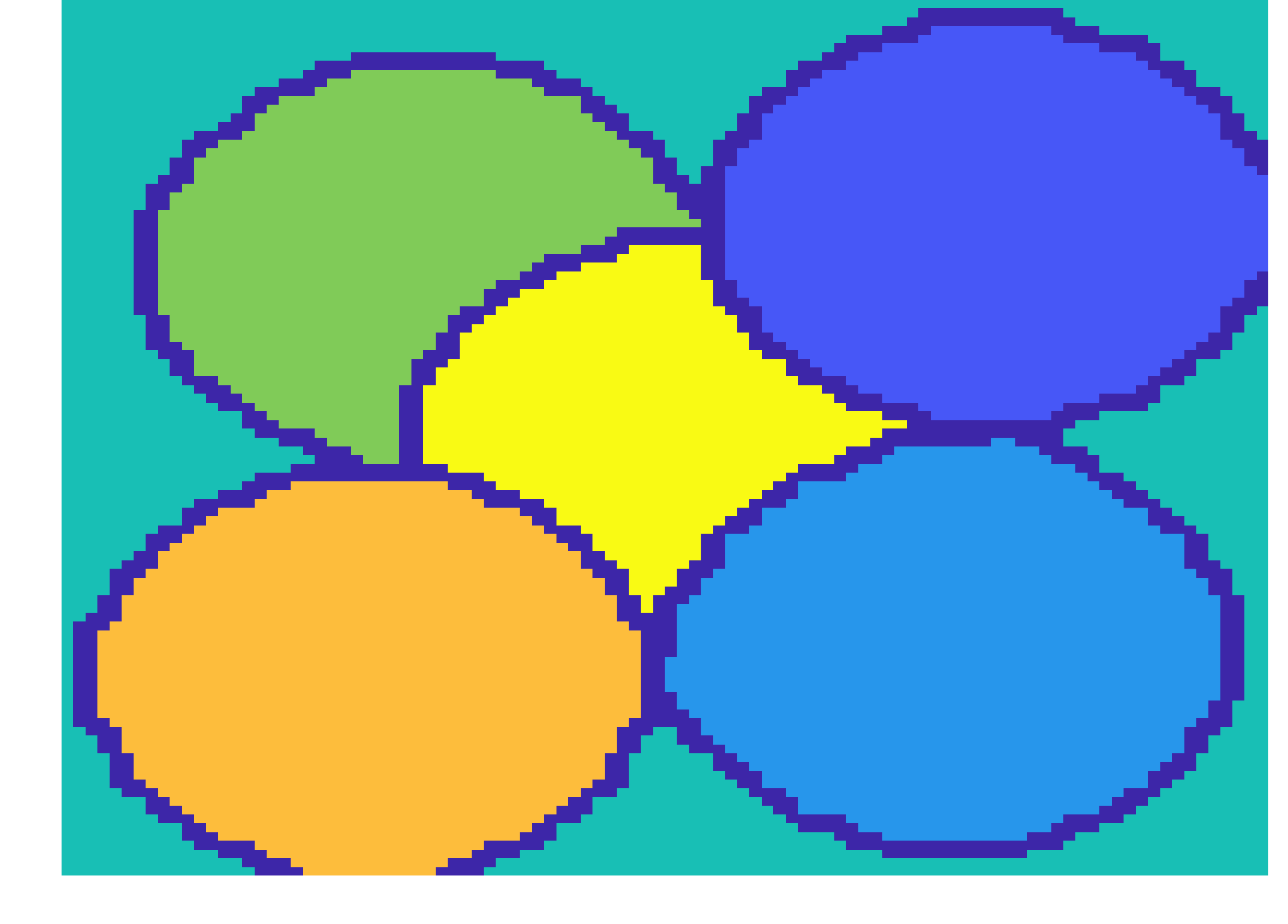}\vspace{1mm}
	\end{minipage}
	}
\hspace{-11.8mm}
	\subfigure{
	\begin{minipage}[c]{0.2\textwidth}
    \includegraphics[width=2.0cm]{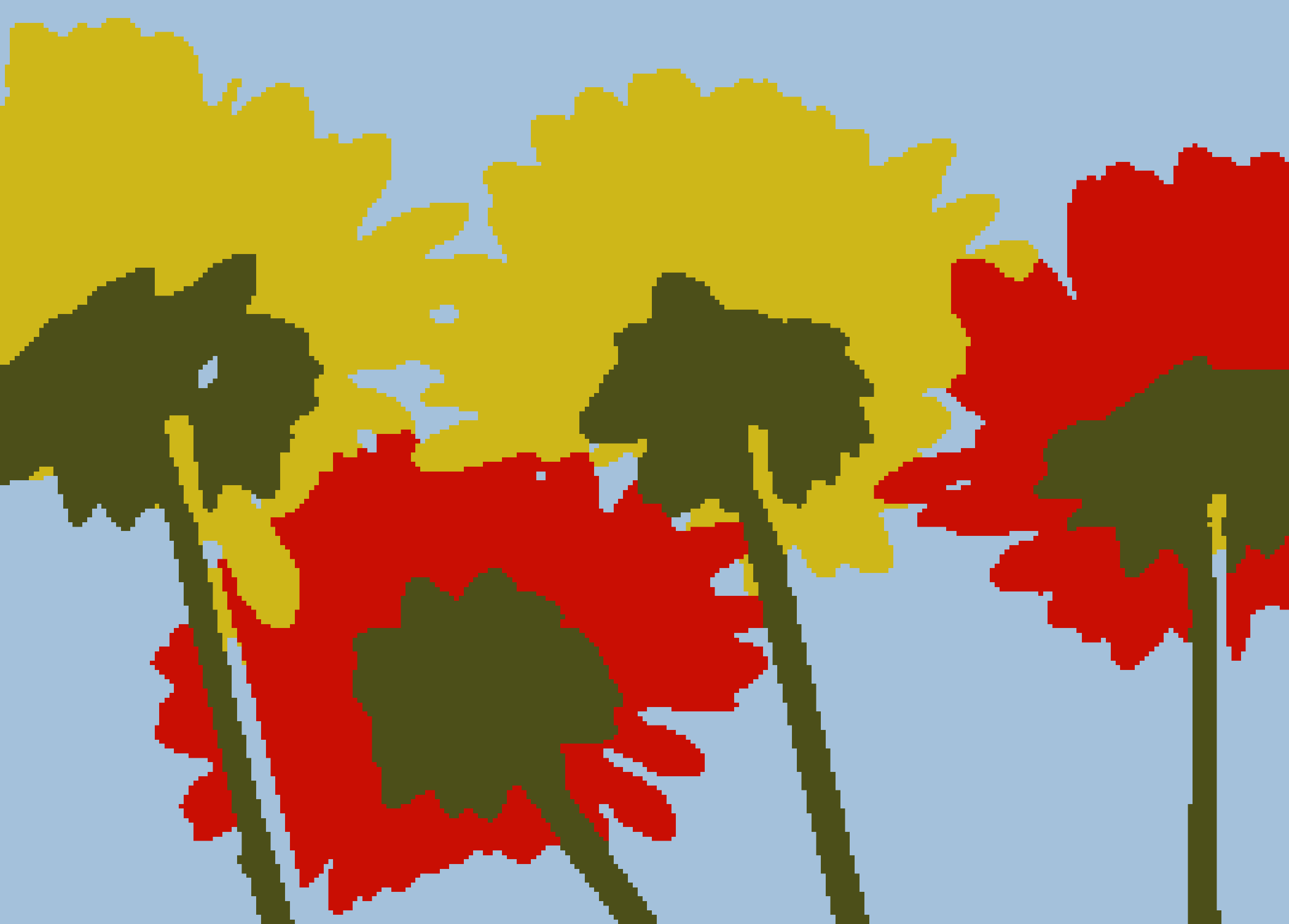}\vspace{1mm} \vspace{1mm}	  
	\includegraphics[width=2.0cm]{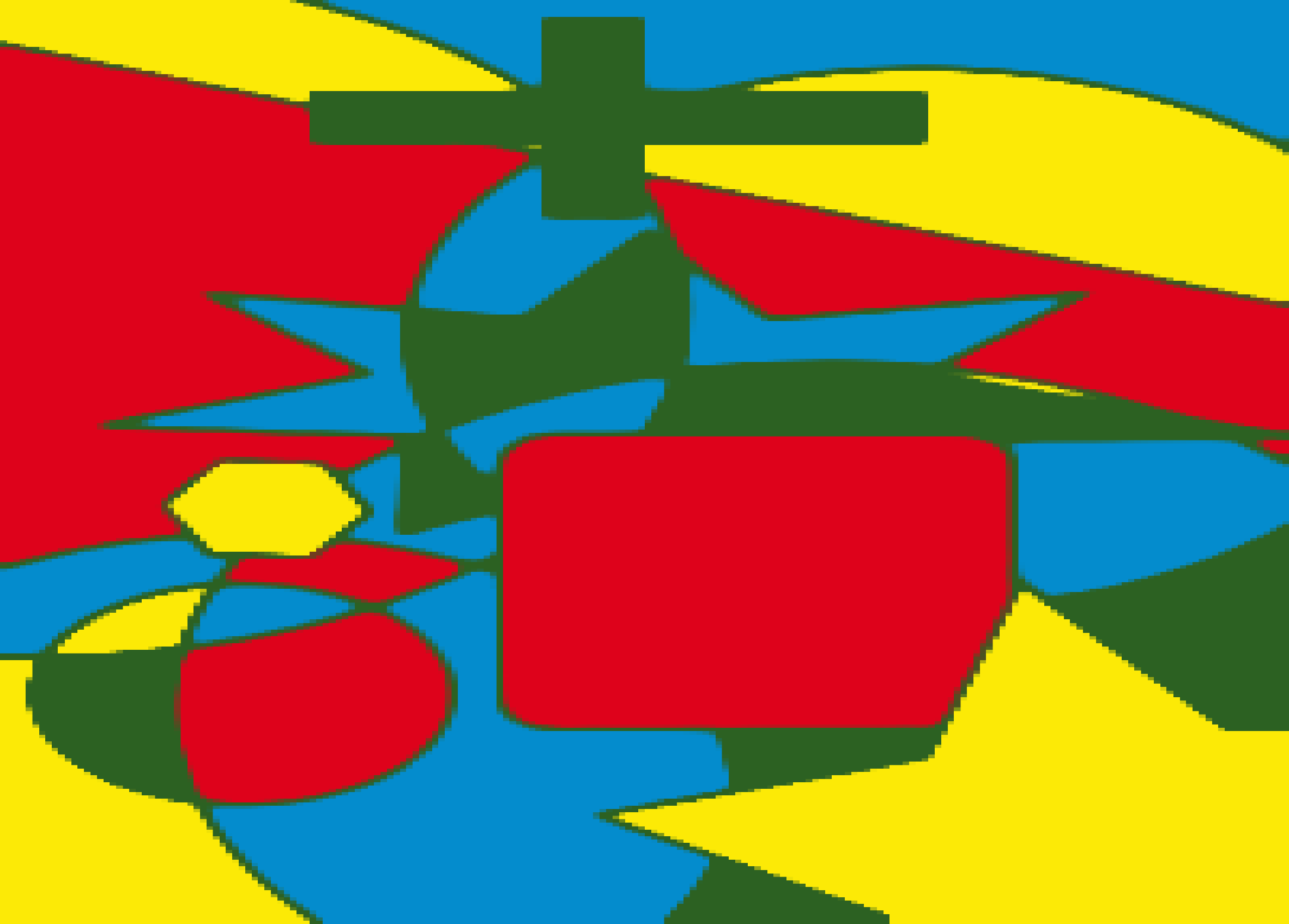} \vspace{1mm} 
    \includegraphics[width=2.0cm]{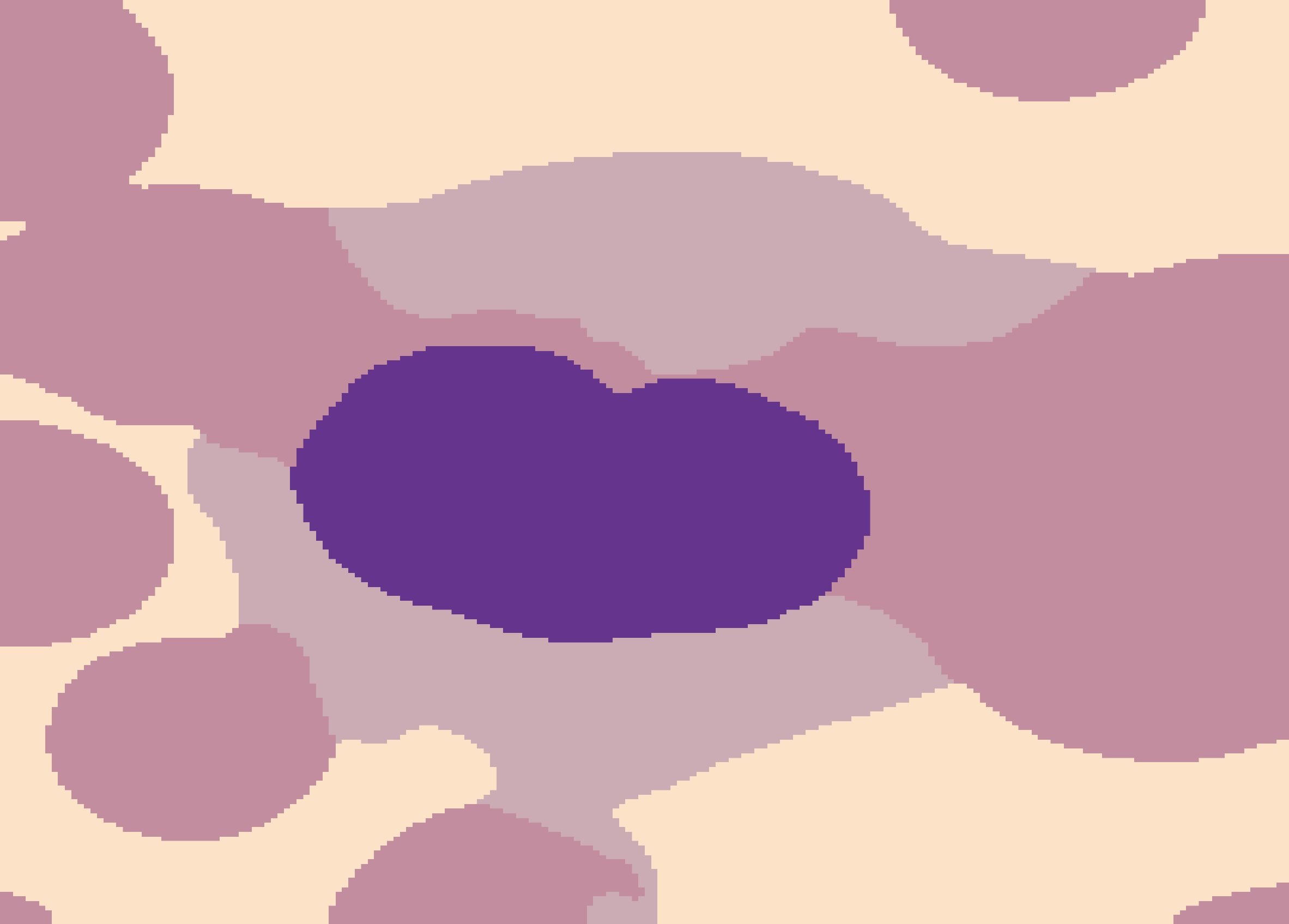}\vspace{0.5mm} 	
	\includegraphics[width=2.0cm]{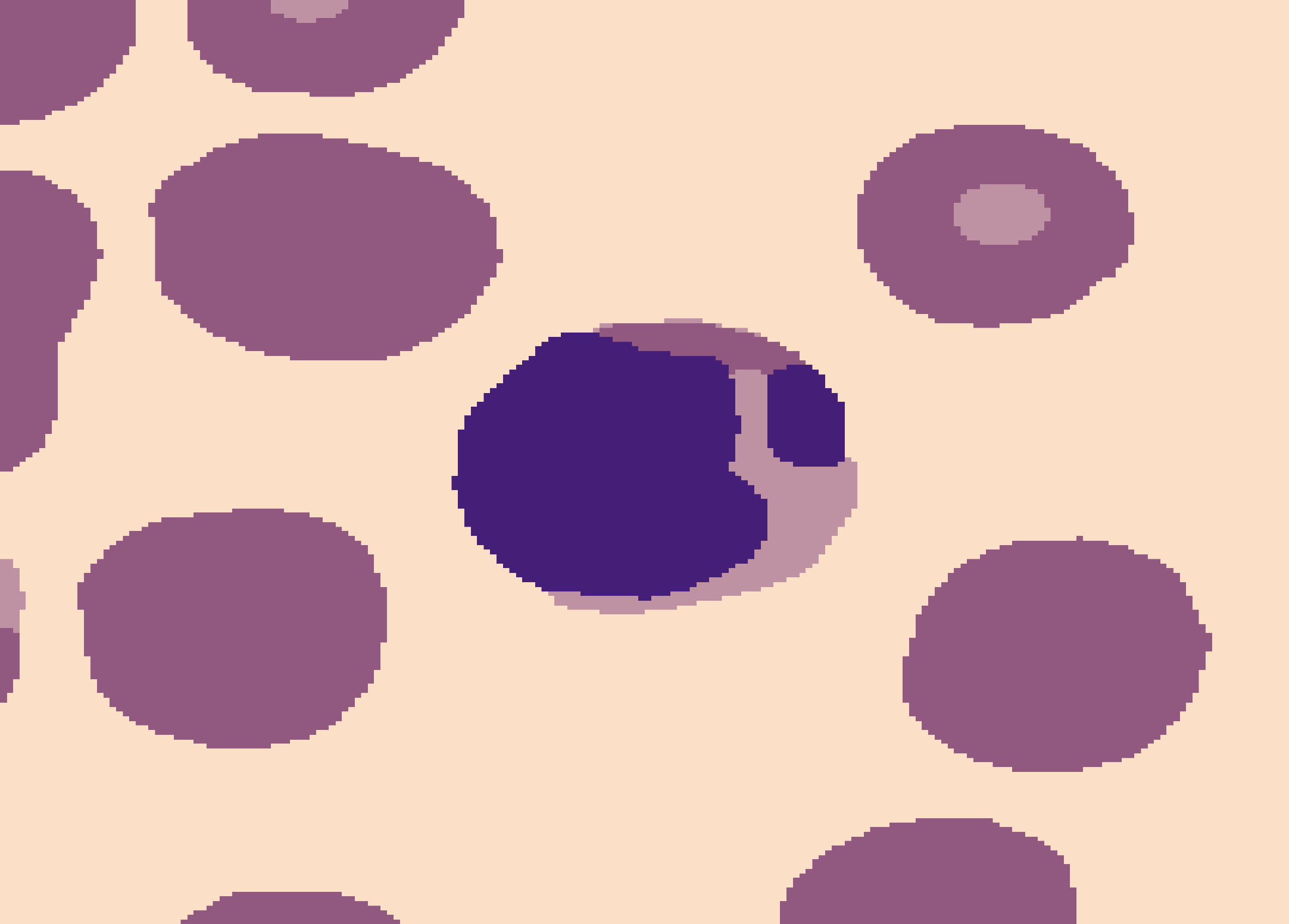} \vspace{1mm}\vspace{1mm}
\includegraphics[width=2.0cm]{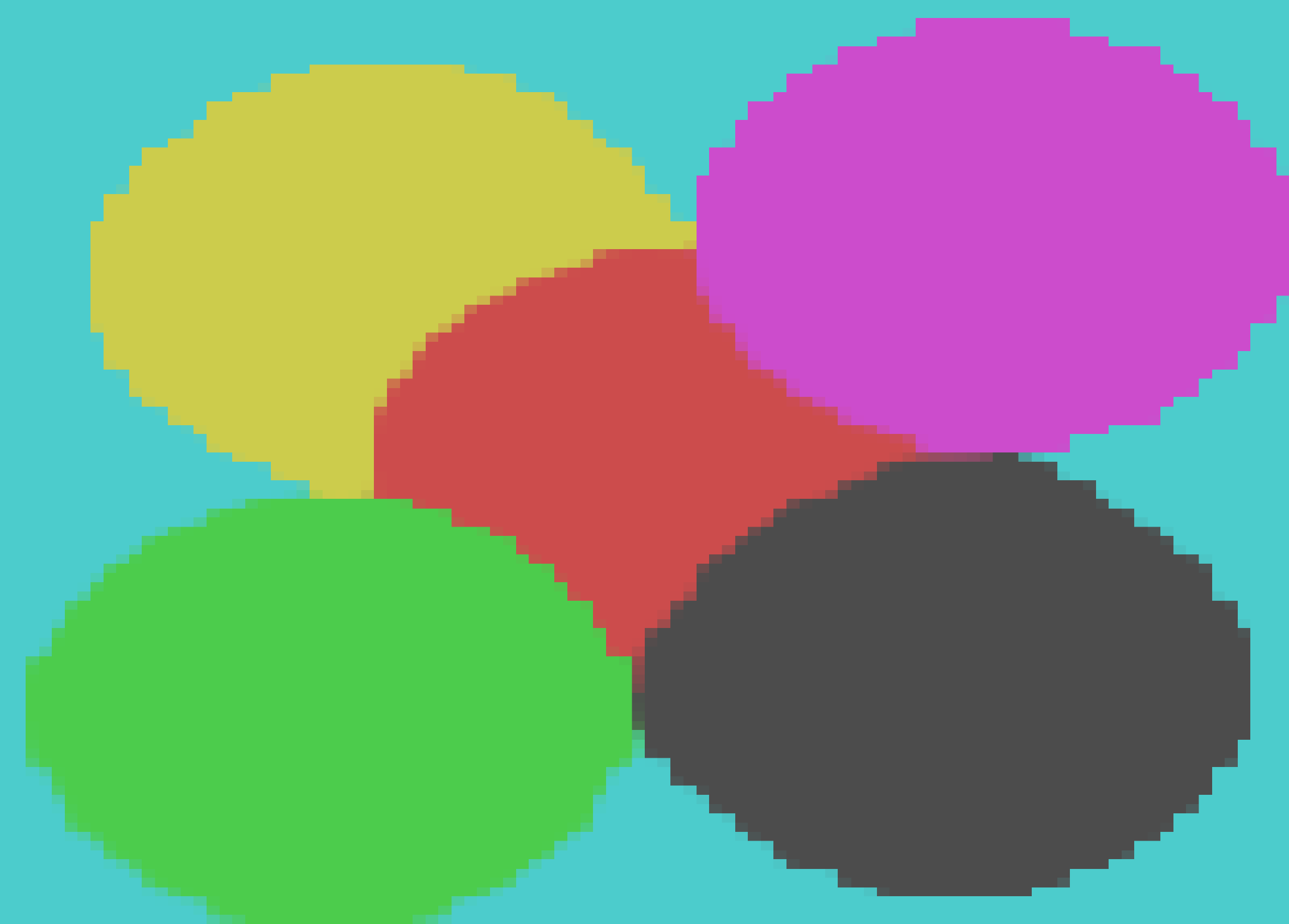} \vspace{1mm}
  \end{minipage}
	}
\hspace{-11.8mm}
	\subfigure{
	\begin{minipage}[c]{0.2\textwidth}
    \includegraphics[width=2.0cm]{ex22_seg-eps-converted-to.pdf}\vspace{1mm}
	\includegraphics[width=2.0cm]{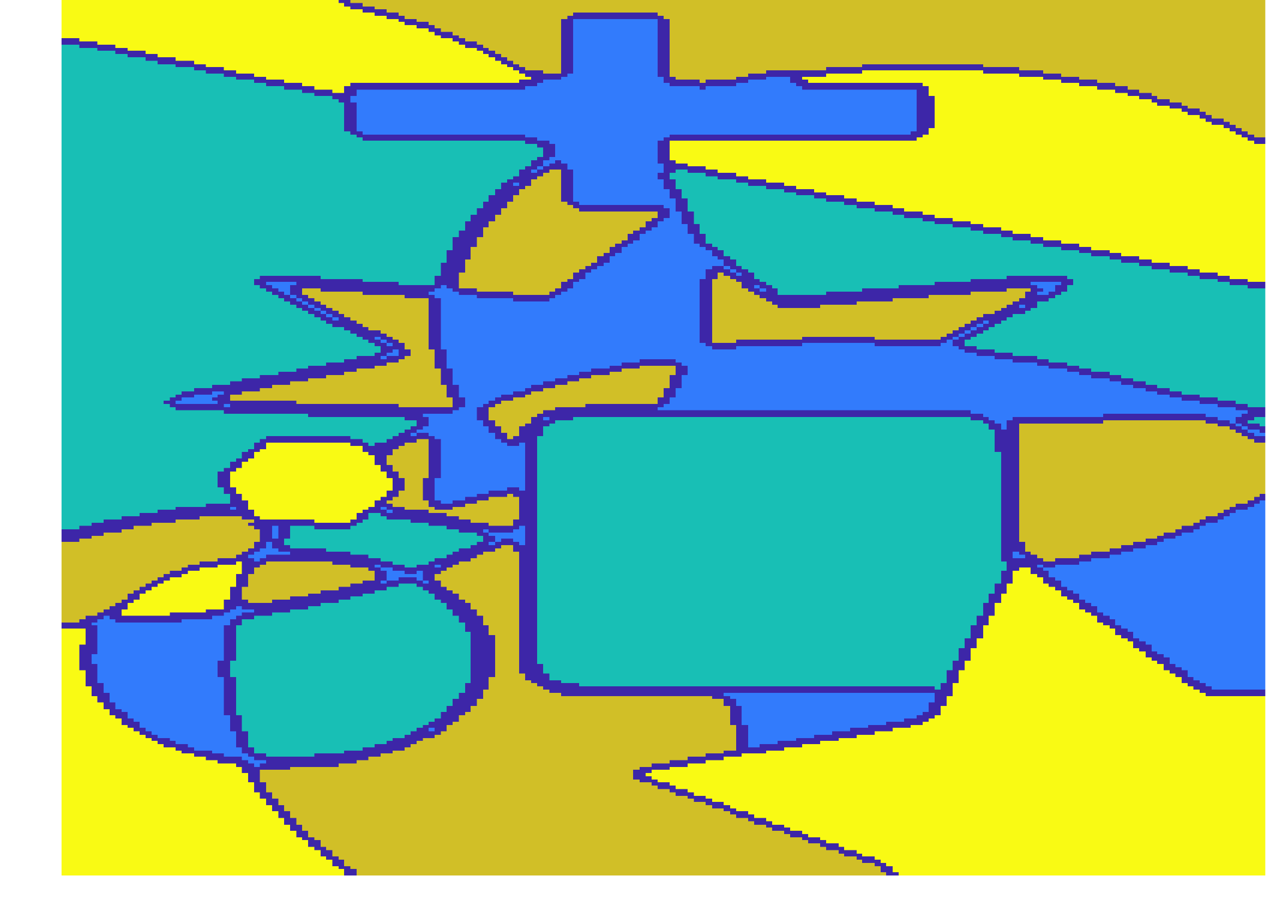} \vspace{1.mm}
    \includegraphics[width=2.0cm]{ex23_seg-eps-converted-to.pdf}\vspace{1.mm}
	\includegraphics[width=2.0cm]{ex24_seg-eps-converted-to.pdf} \vspace{1.mm}
 \includegraphics[width=2.0cm]{ex25_seg-eps-converted-to.pdf} \vspace{1mm}
	\end{minipage}
	}
}
\caption{ \label{fig:WBC} Segmentation comparison. For left to right:  Original image, CH\cite{Yang2019_JSC}, SLaT\cite{Cai2015JSC}, ICTM\cite{Wang_2017}, CKA\cite{Wu2021IET}, and our ACCV model. }
\end{figure}

\begin{table}[!b]
\begin{tabular}{cccccc}
\hline
 Image{\ }           & CH\cite{Yang2019_JSC}     &SLaT\cite{Cai2015JSC} &  ICTM\cite{Wang_2017}   &  CKA\cite{Wu2021IET}     & ACCV\\ \hline
Flowers       &  28.48  & 5.93    &  3.33    &  4.56     & 3.11 \\
 4colors       &  3.97   & 2.56    &  3.11    &  4.37     & 6.12        \\  
 WBC1          &  16.14  & 2.08   &  13.76   & 13.51     & {6.87}  \\ 
 WBC2          &  15.92  & 2.13   &  10.25   & 6.21      &{4.65}\\  
 6phases       &  17.37  & 0.75   &   3.74   & 6.15      & 4.76 \\\hline  
\end{tabular}
\caption{\label{tab:CPU_time_comparison}  Segmentation comparison on CPU time for CH\cite{Yang2019_JSC}, SLaT\cite{Cai2015JSC}, ICTM\cite{Wang_2017}, CKA\cite{Wu2021IET}, and our ACCV model.} 
\end{table}
\subsection{Experiments on the capability for noise images}
{Through two corrupted image ``4blocks-1" and ``4blocks-2"  by different levels of Gaussian noise}, this subsection exhibits the robustness of our model for noise.
For the sake of illustration, three celebrated and effective segmentation methods are adopted for comparison, including SLaT \cite{Cai2015JSC}, ICTM \cite{Wang_2017}, and CKA\cite{Wu2021IET}.
{In this example, we set $\sigma = 0.02, M = 1$ for the Multi-IGLIM, and $\epsilon = 60.0,\lambda = 40, h = 1.0, S=80 $ for the ACCV model.}


We first display the segmentation results of the polluted ``4blocks-1" in Figure \ref{fig:noise2}. The difficulty of the segmentation lies in the intersection of four grayscale squares as well as the intersecting edge with inhomogeneous intensity. Moreover, the close intensities lead that some segmentation methods regard the white and light gray blocks as one segment. From the first row to the fourth row, we gradually increase the Gaussian noise level by taking the variance from 0.01,0.02,0.03,0.04, respectively. ICTM shows not only the four desired objects but also all the noise points, which shows its sensitivity to noise. In spite of the fact that the cost CPU time of SLaT is the least for this image, the segmentation results of SLaT and CKA have blurry edges, which cause inaccurate partition.

{Then a collection of corrupted images ``4blocks-2"  are devoted to doing the multi-phase segmentation in Figure \ref{fig:noise1}. The added Gaussian noise has zero average with variances 0.01,0.03 and 0.05 from top to bottom. The results show that there exists an extra part on the top-right block of the result of SLaT, compared with the ideal segmentation. The behavior of ICTM is still limited by the noise points. For CKA,  the segmented edges of four blocks are covered by some blur. By contrast, our model provides a relatively satisfactory segmentation. The CPU time for this part is recorded in Table \ref{tab:noise}, which also illustrates the efficiency of our ACCV model.}

\begin{figure}[!t]
\resizebox{\textwidth}{!}{\hspace{11.8mm}
	\centering
\subfigure{
	\begin{minipage}[c]{0.2\textwidth}
		\includegraphics[width=2.0cm]{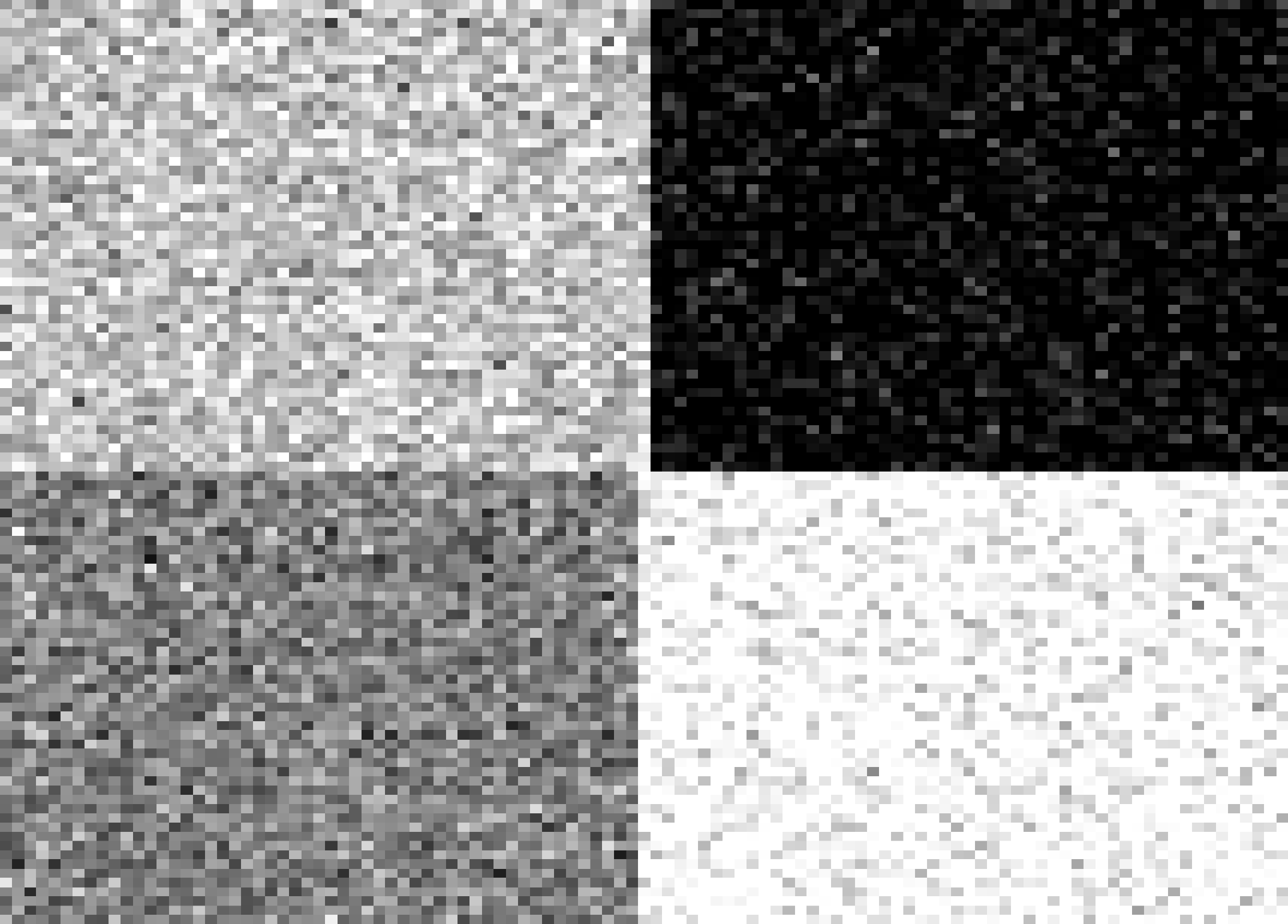} \vspace{1mm}
		\includegraphics[width=2.0cm]{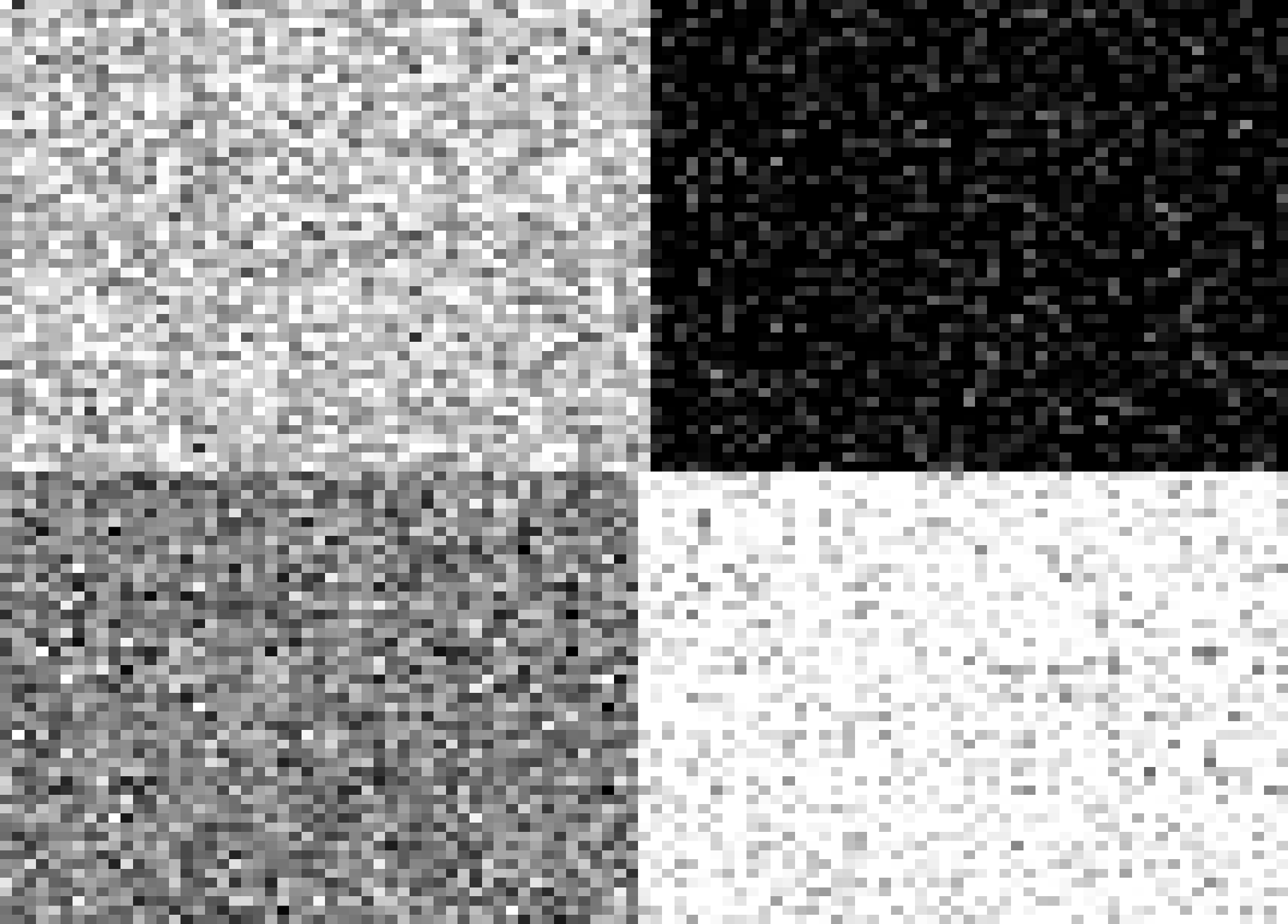} \vspace{1mm}
         \includegraphics[width=2.0cm]{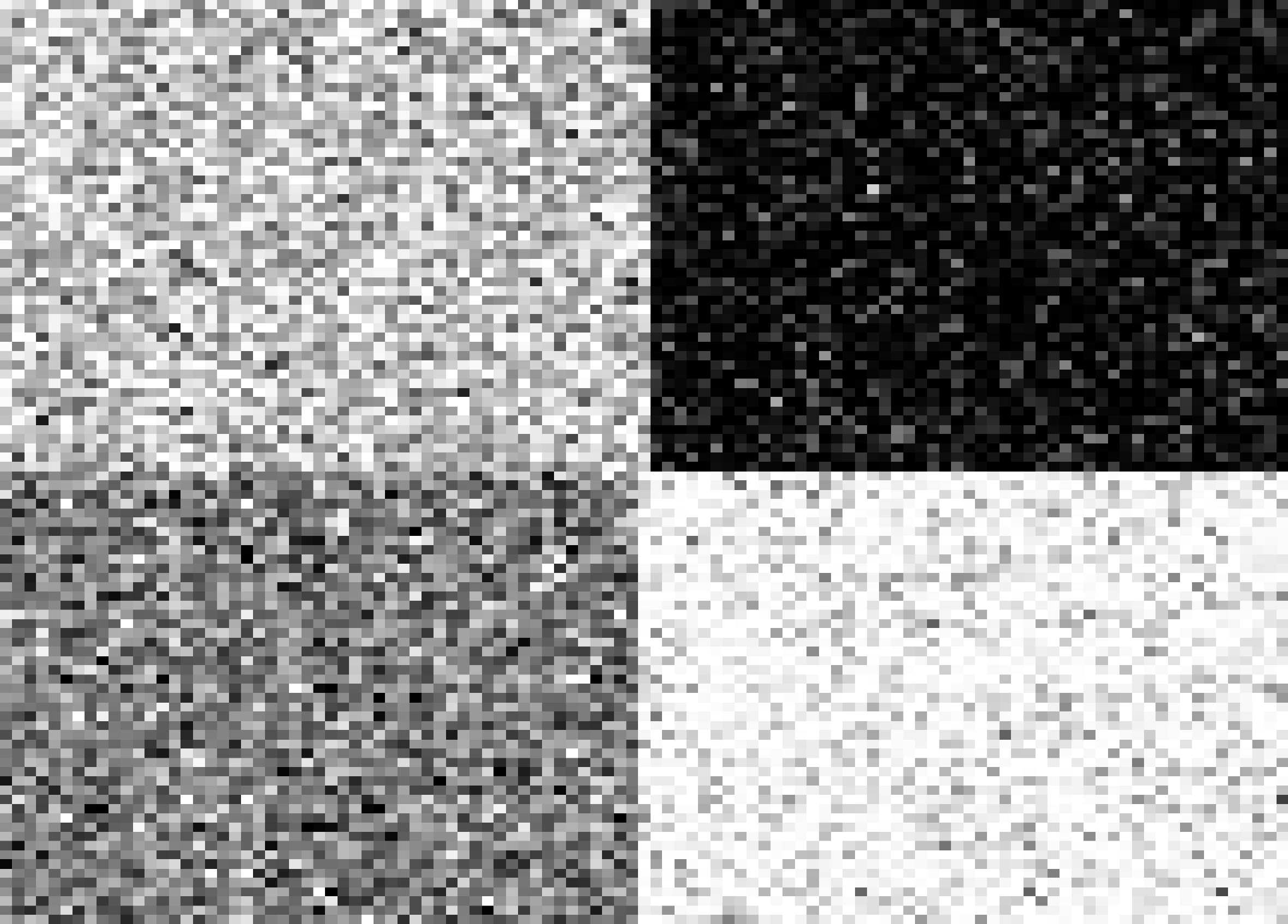}\vspace{2mm}
	\end{minipage}
	}
\hspace{-11.8mm}
	\subfigure{
	\begin{minipage}[c]{0.2\textwidth}
	\includegraphics[width=2.0cm]{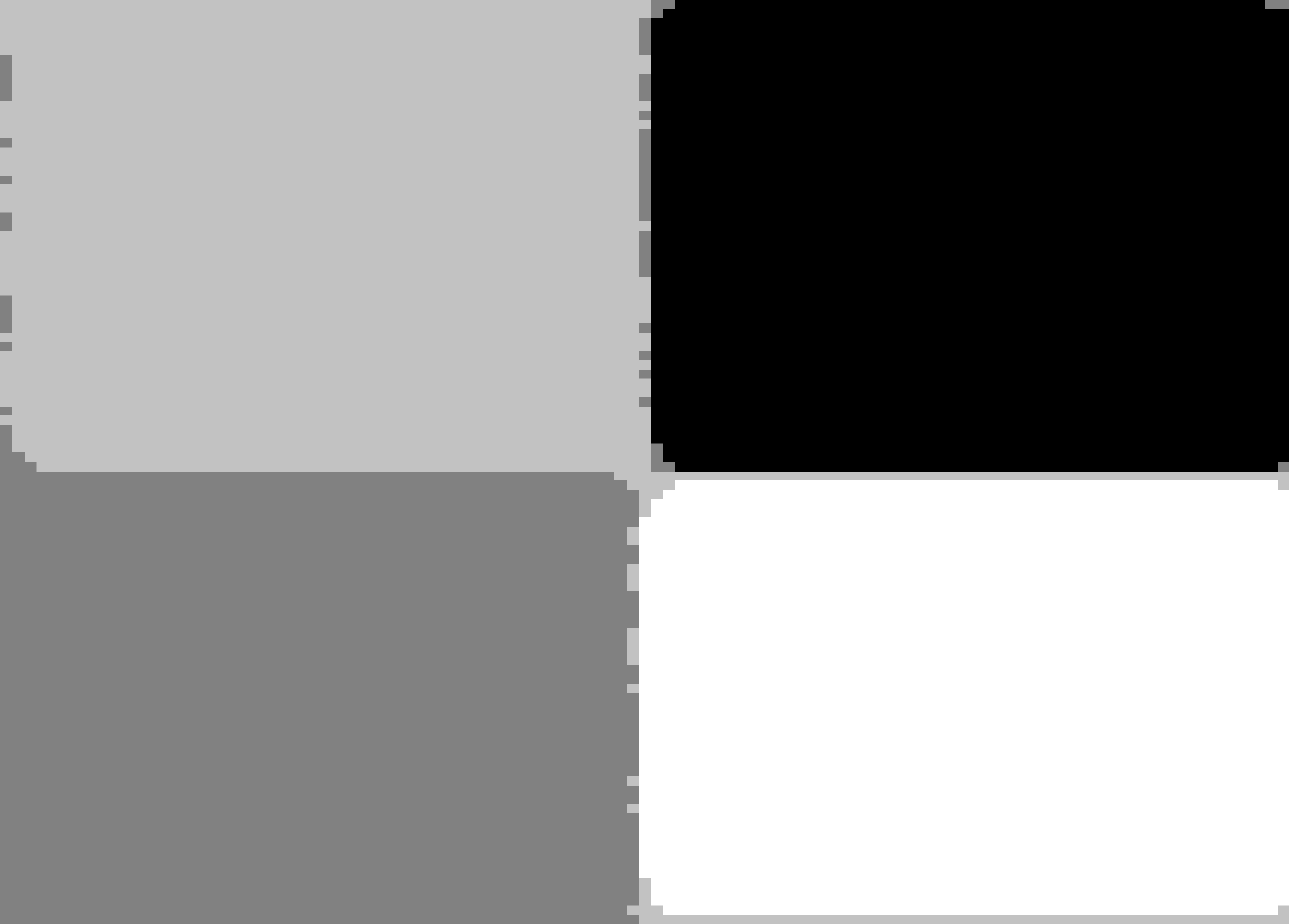}\vspace{1mm}
	\includegraphics[width=2.0cm]{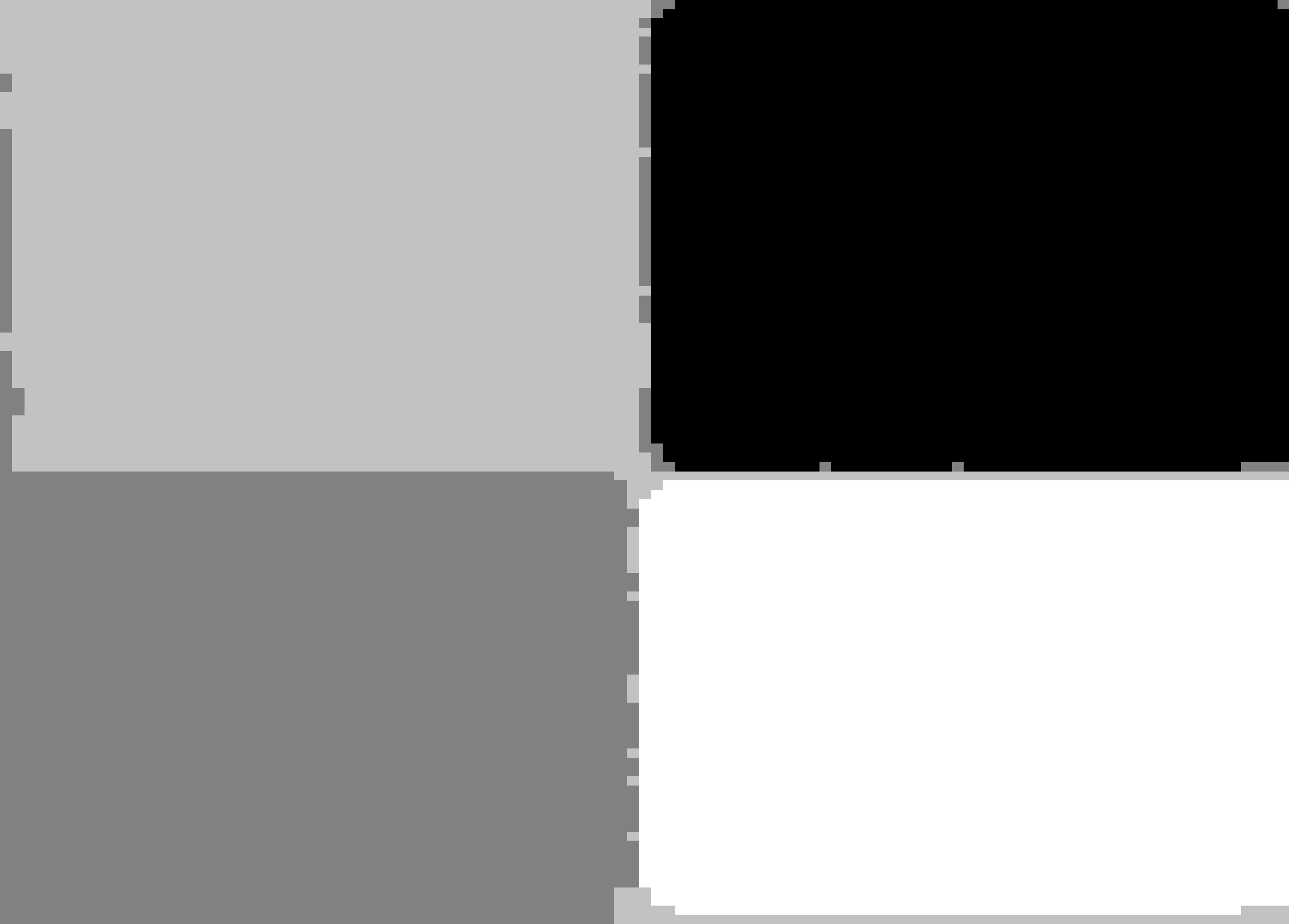}\vspace{1mm}
	\includegraphics[width=2.0cm]{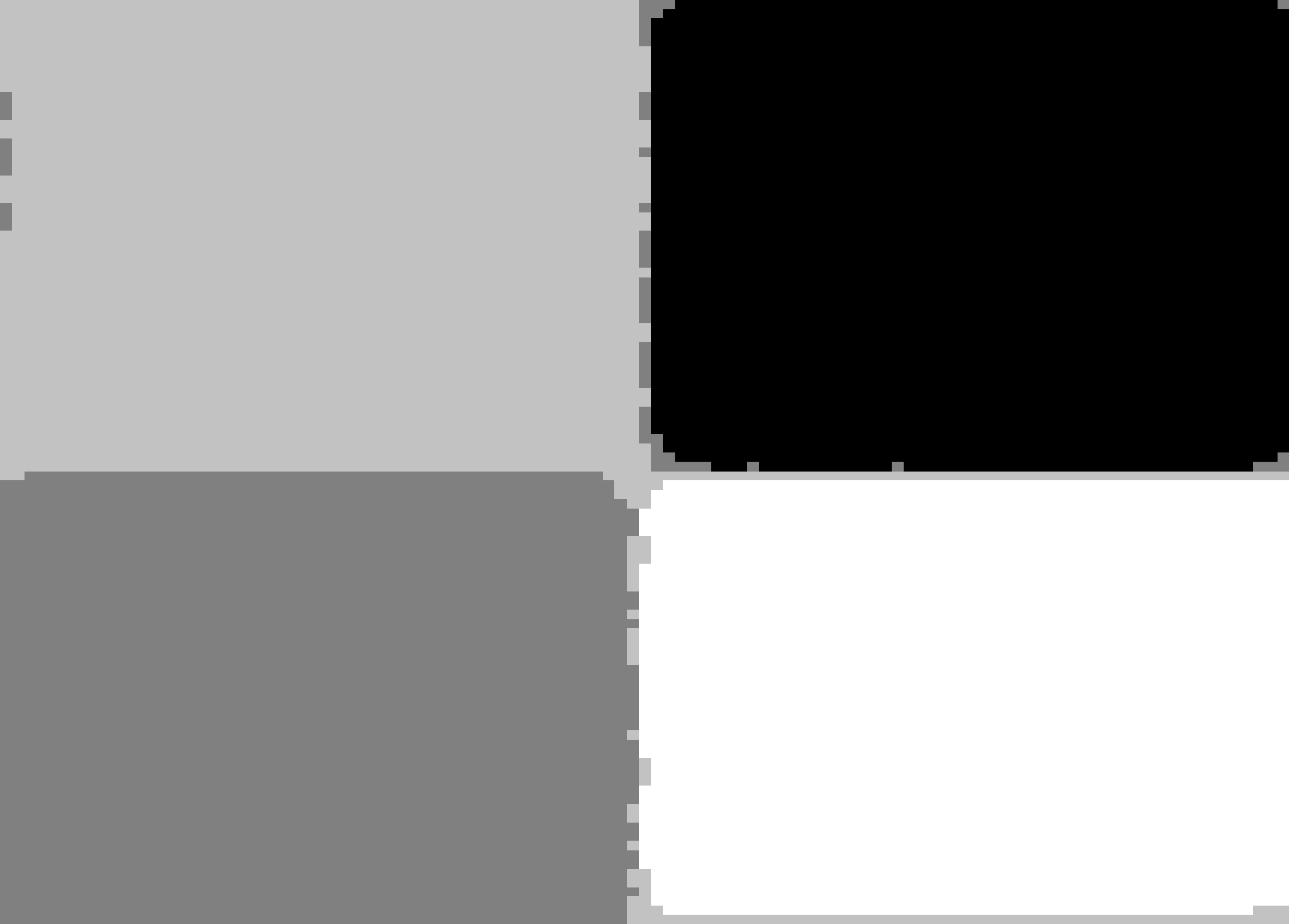} \vspace{2mm}
	\end{minipage}
	}
\hspace{-11.8mm}
	\subfigure{
	\begin{minipage}[c]{0.2\textwidth}
	\includegraphics[width=2.0cm]{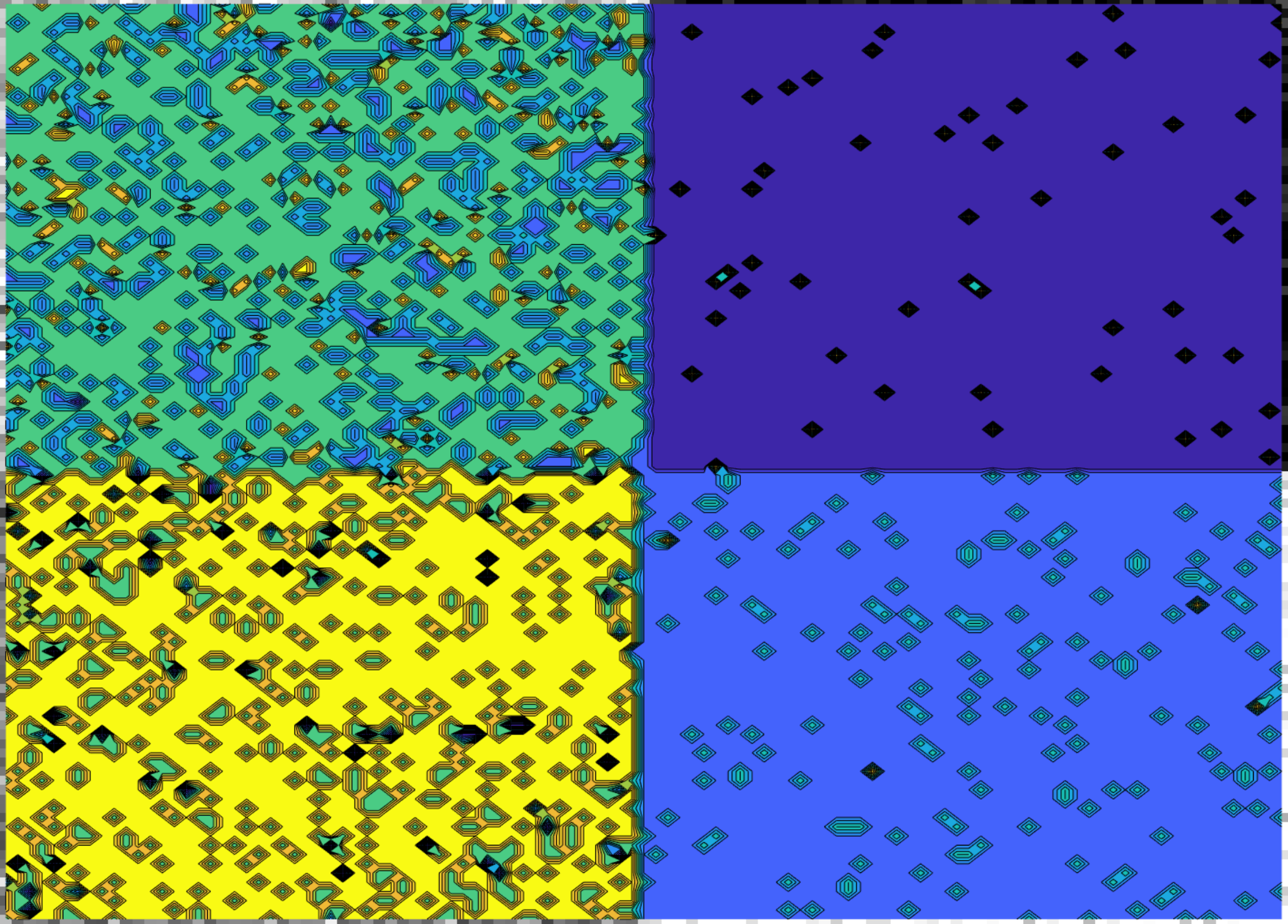}\vspace{1mm}
	\includegraphics[width=2.0cm]{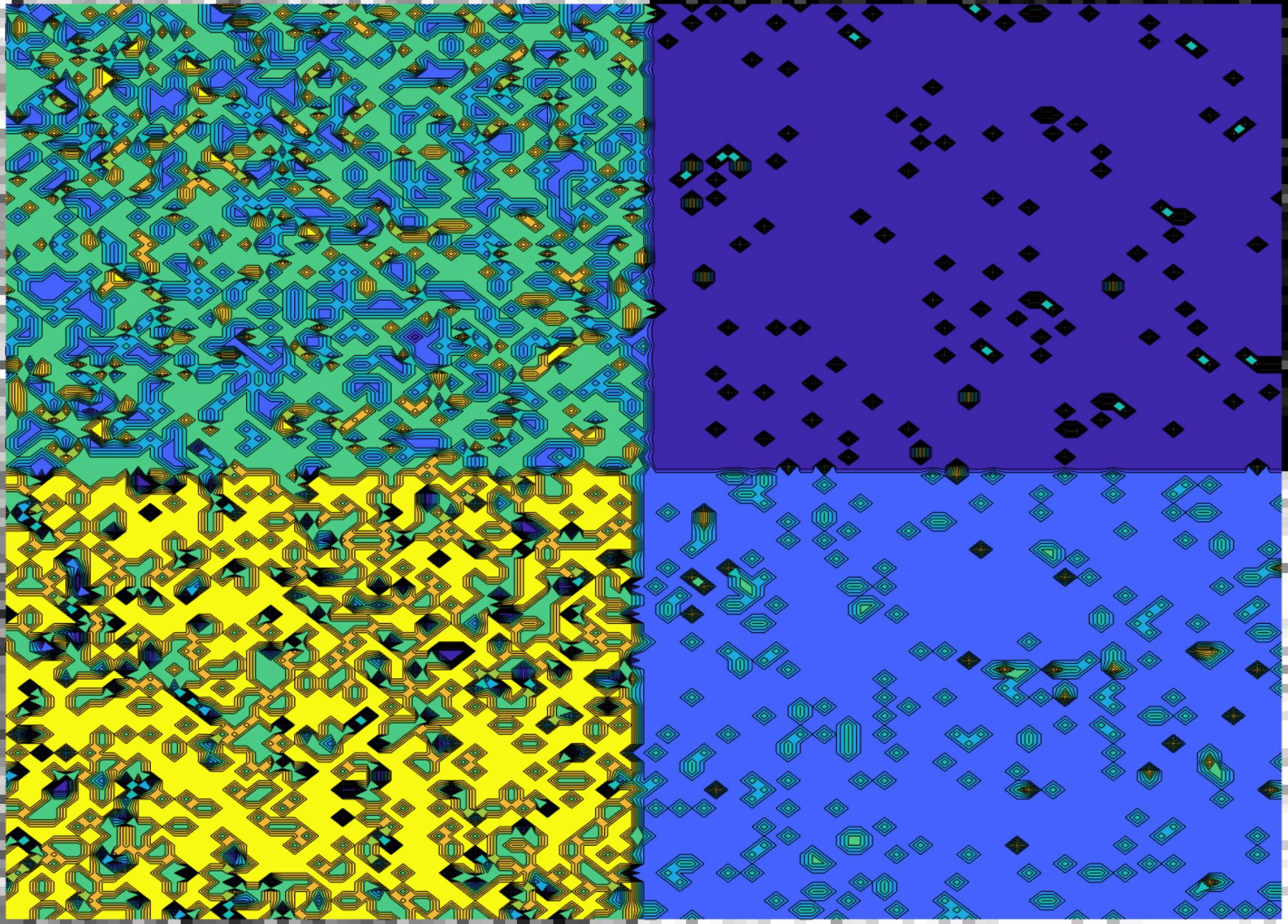}\vspace{1mm}
	\includegraphics[width=2.0cm]{ICTM_block_noise3_seg-eps-converted-to.pdf} \vspace{2mm}
	\end{minipage}
	}
\hspace{-11.8mm}
\subfigure{
	\begin{minipage}[c]{0.2\textwidth}
	\includegraphics[width=2.0cm]{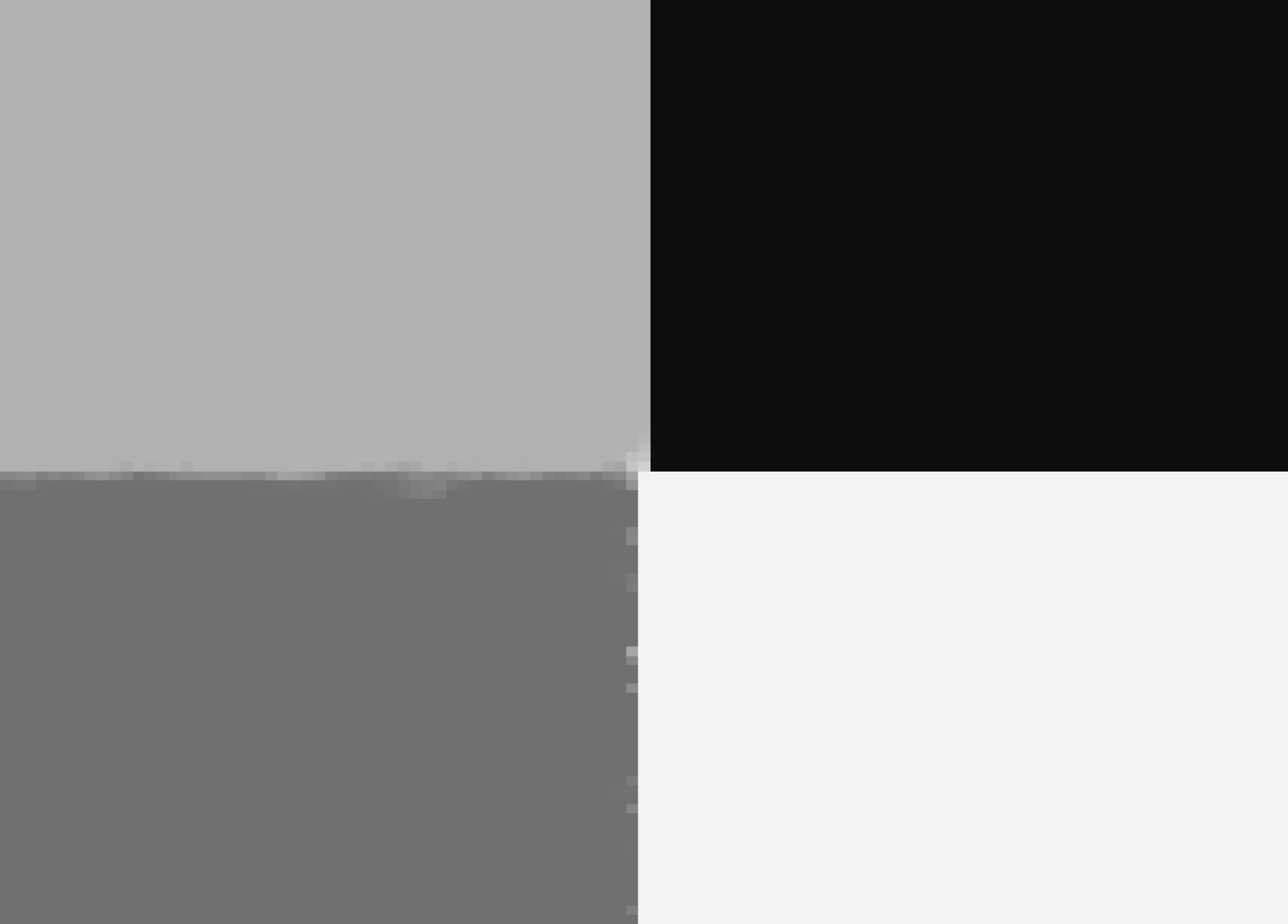}\vspace{1mm}
	\includegraphics[width=2.0cm]{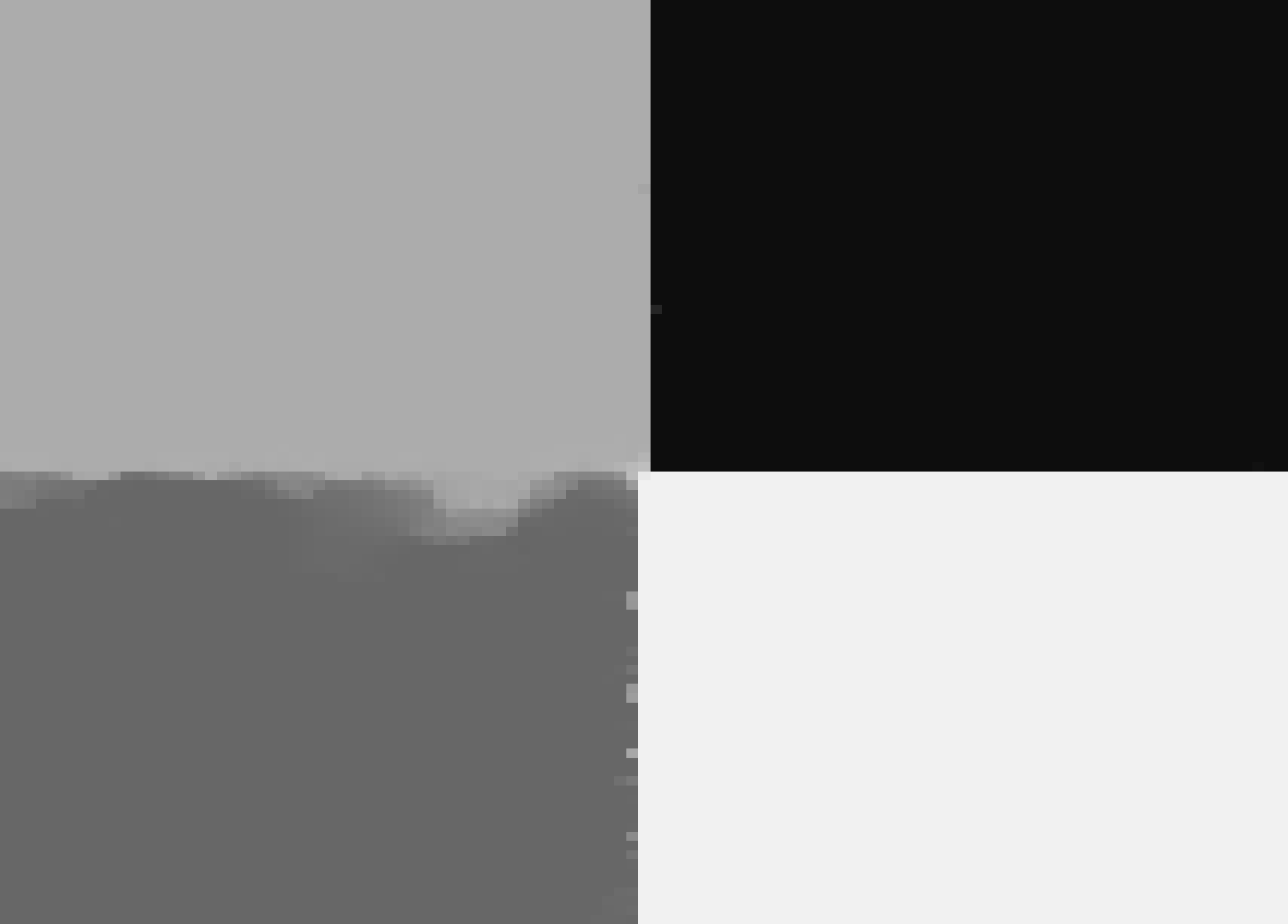}\vspace{1mm}
	\includegraphics[width=2.0cm]{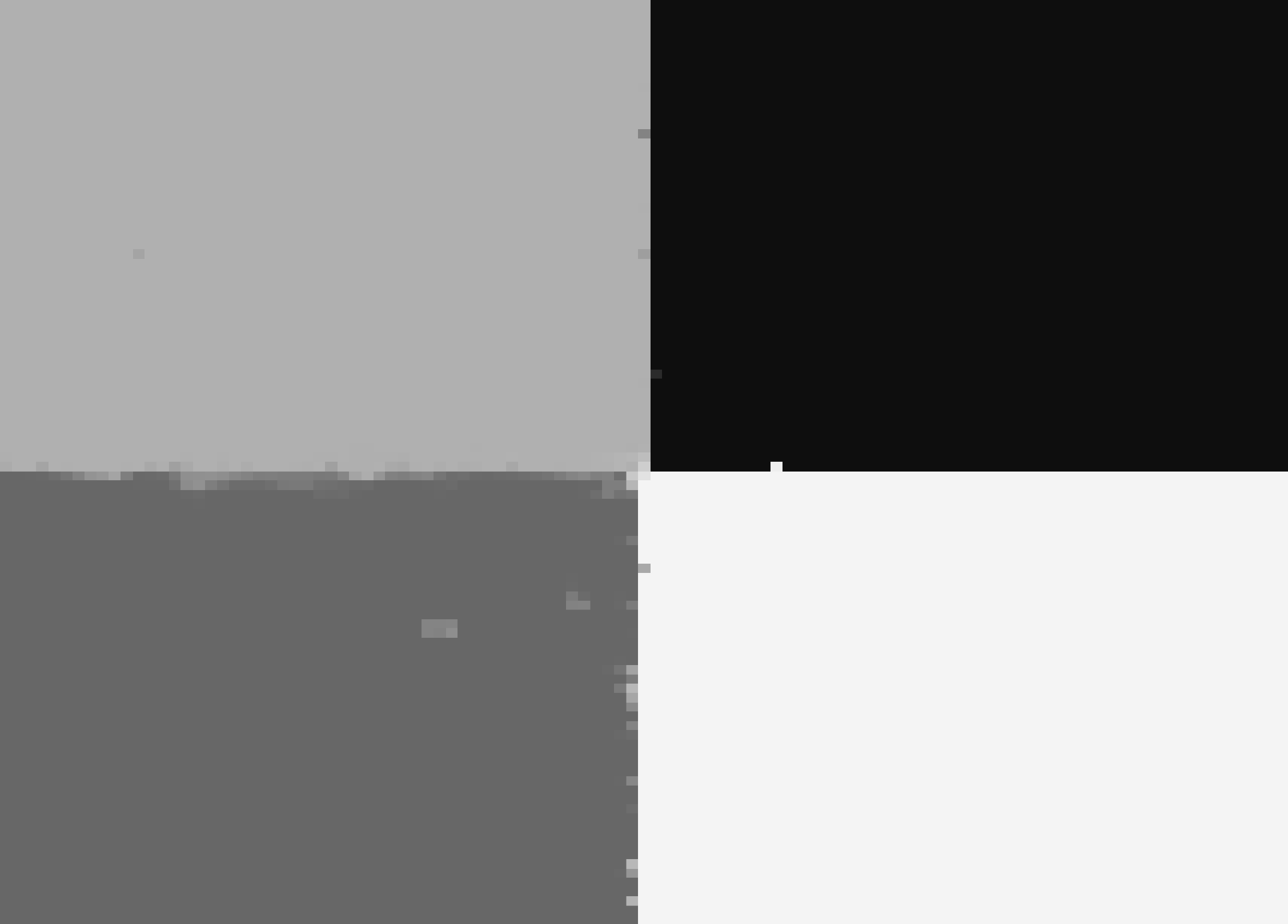} \vspace{2mm}
	\end{minipage}
	}
 \hspace{-11.8mm}
	\subfigure{
	\begin{minipage}[c]{0.2\textwidth}
\includegraphics[width=2.0cm]{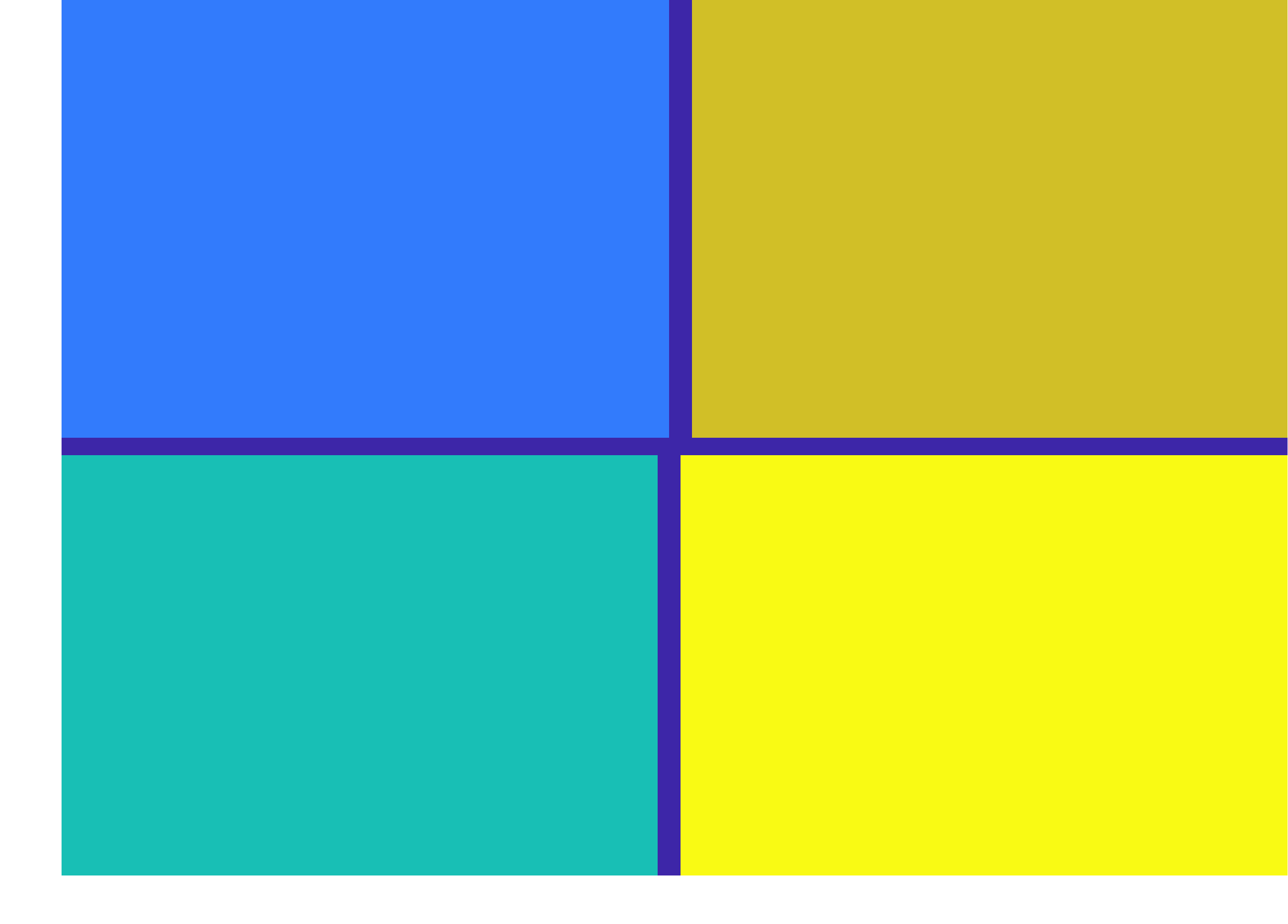}\vspace{1mm}
	\includegraphics[width=2.0cm]{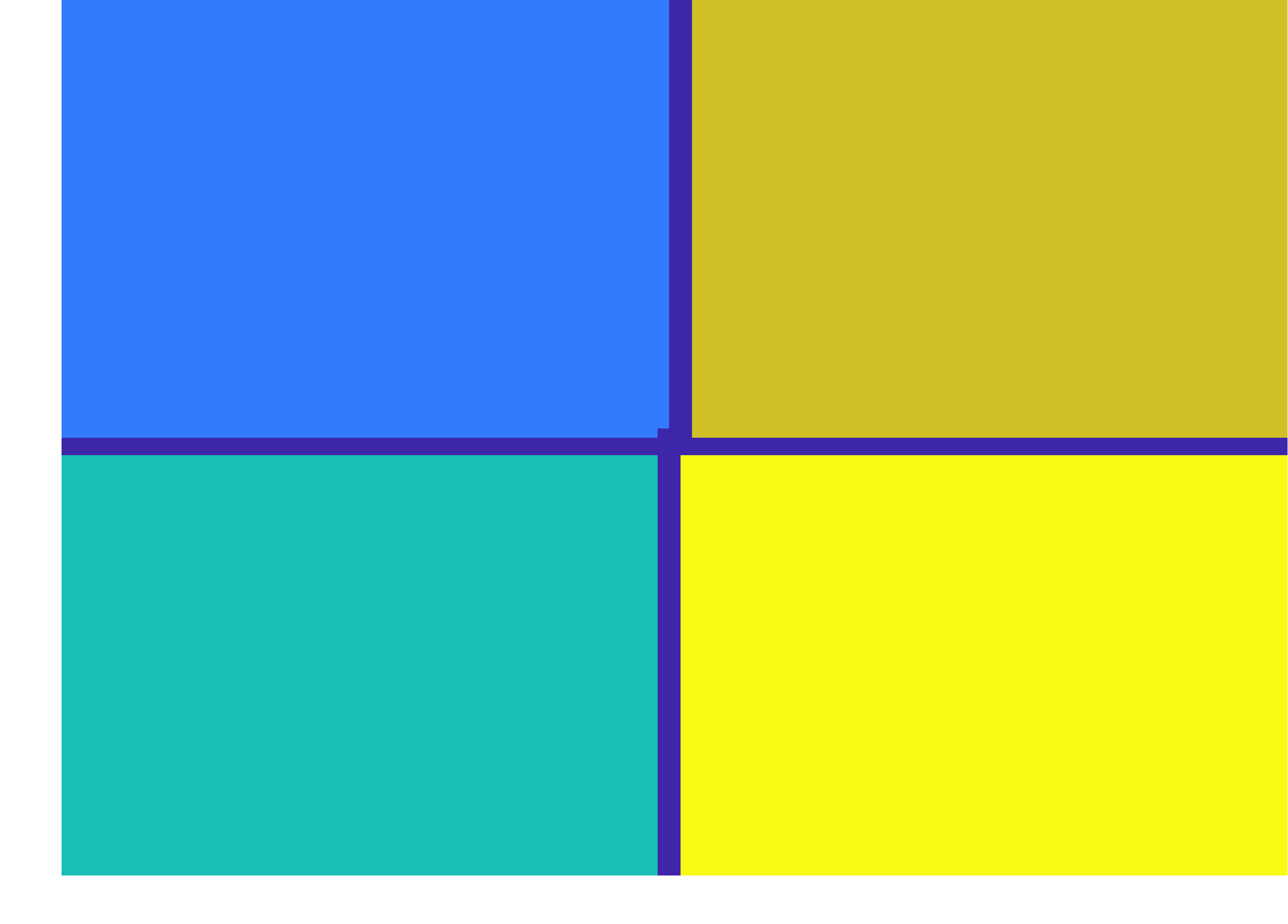}\vspace{1mm}
	\includegraphics[width=2.0cm]{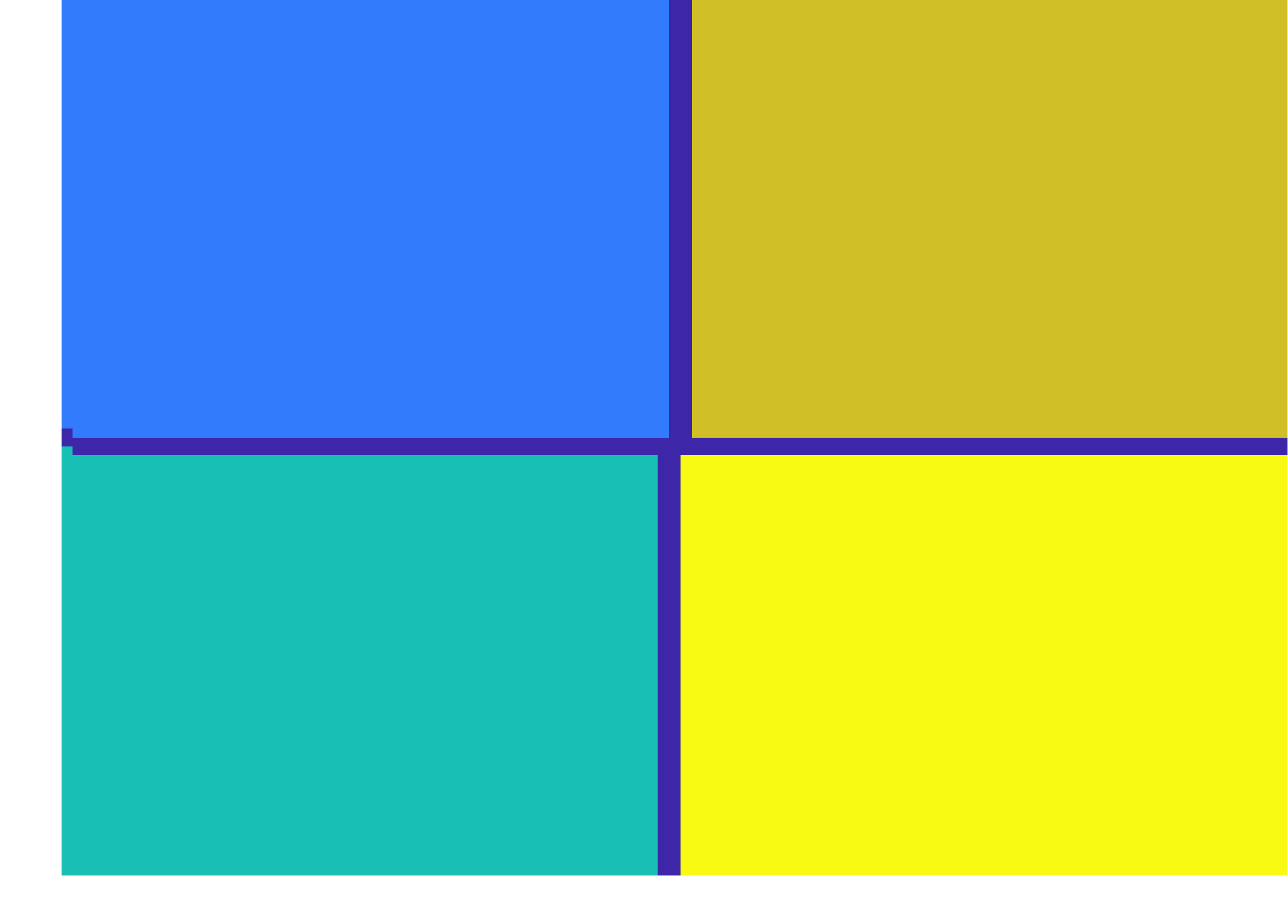} \vspace{2mm}
	\end{minipage}
	}
}
\caption{\label{fig:noise2} Segmentation comparison of image ``4blocks-1". From left to right:  Original image, SLaT\cite{Cai2015JSC}, ICTM\cite{Wang_2017}, CKA\cite{Wu2021IET},  and our ACCV model.}
\end{figure}

\begin{figure}[!t]
\resizebox{\textwidth}{!}{\hspace{11.8mm}
	\centering
\subfigure{
	\begin{minipage}[c]{0.2\textwidth}

\includegraphics[width=2.0cm]{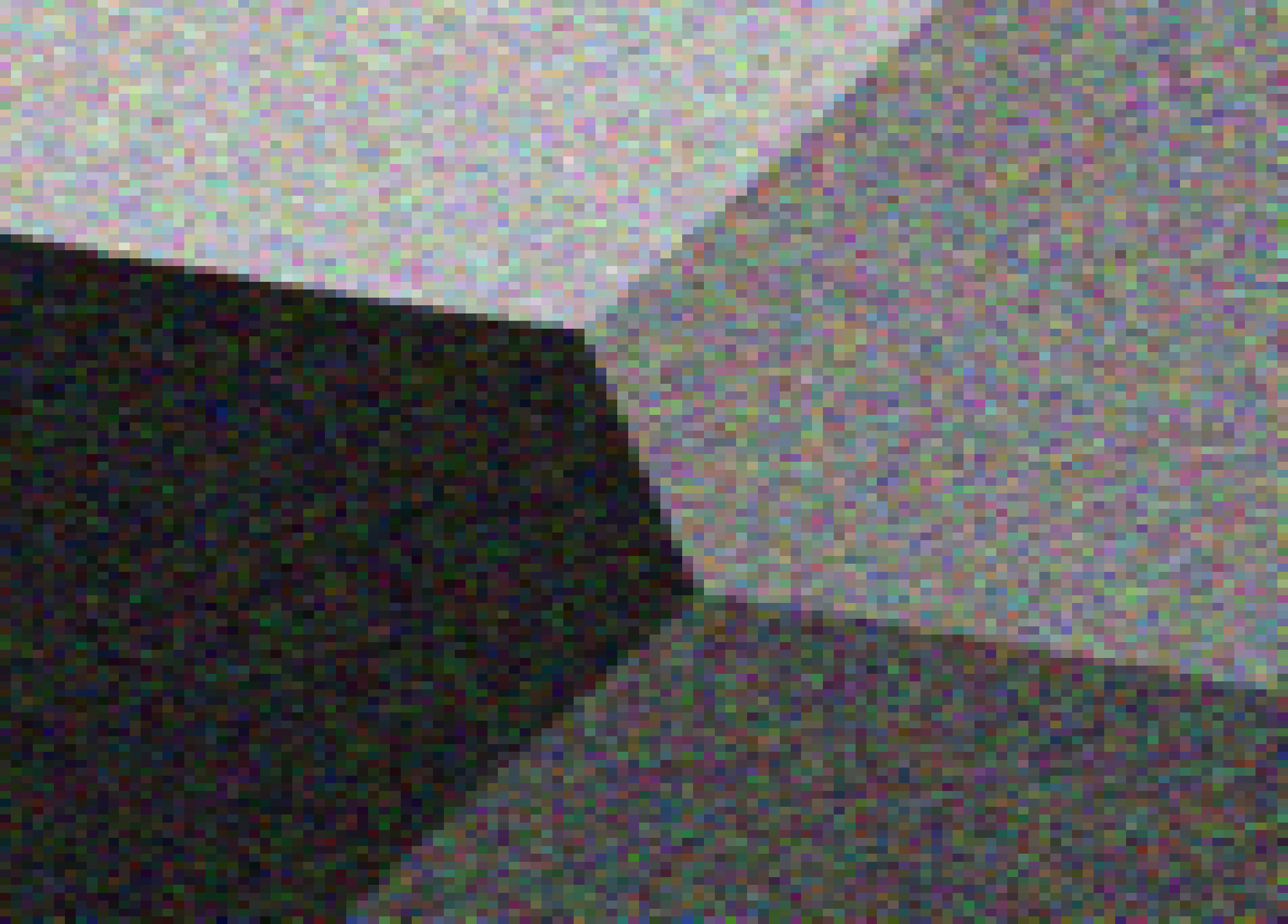}\vspace{1mm} 	
		\includegraphics[width=2.0cm]{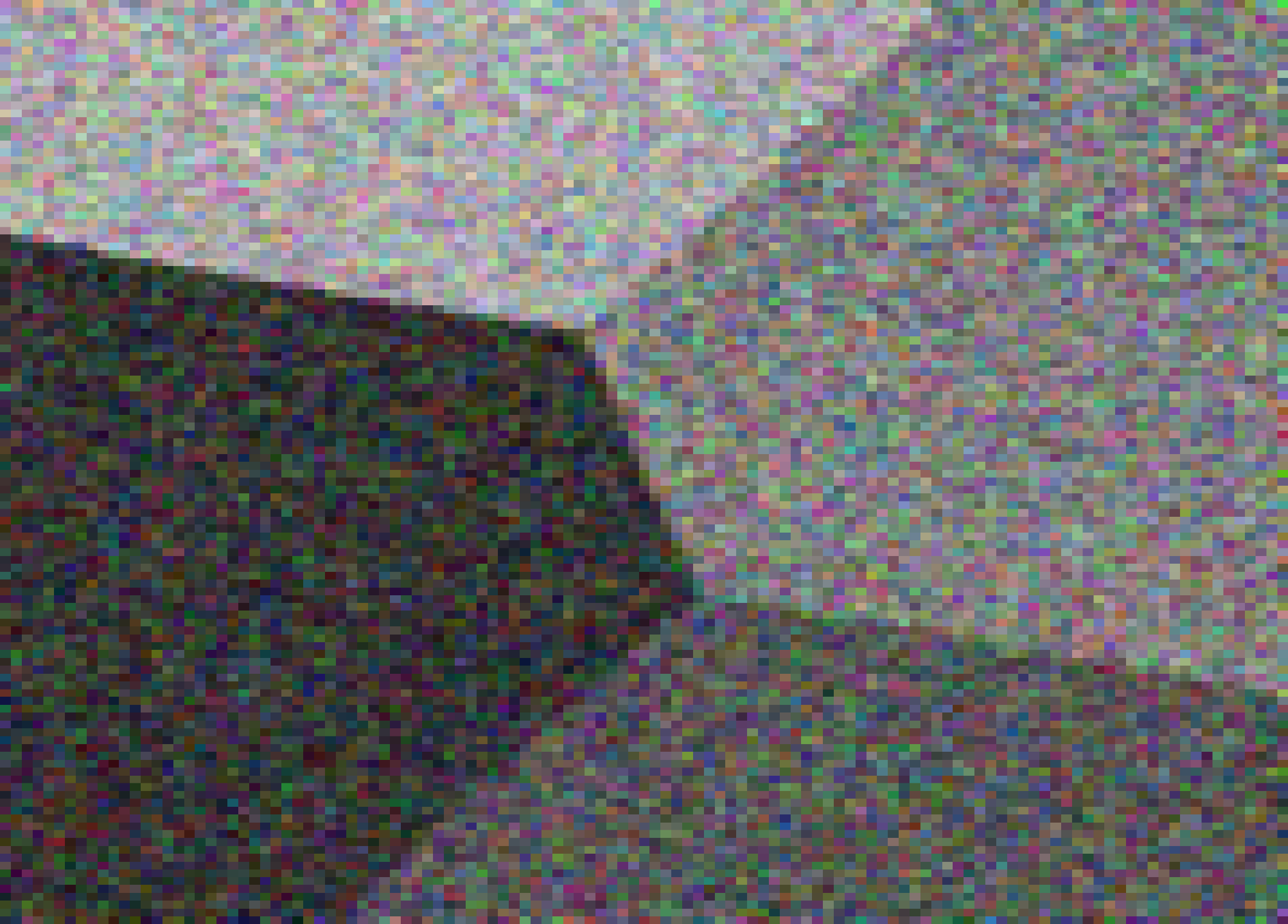} \vspace{1mm}\vspace{1mm}
		\includegraphics[width=2.0cm]{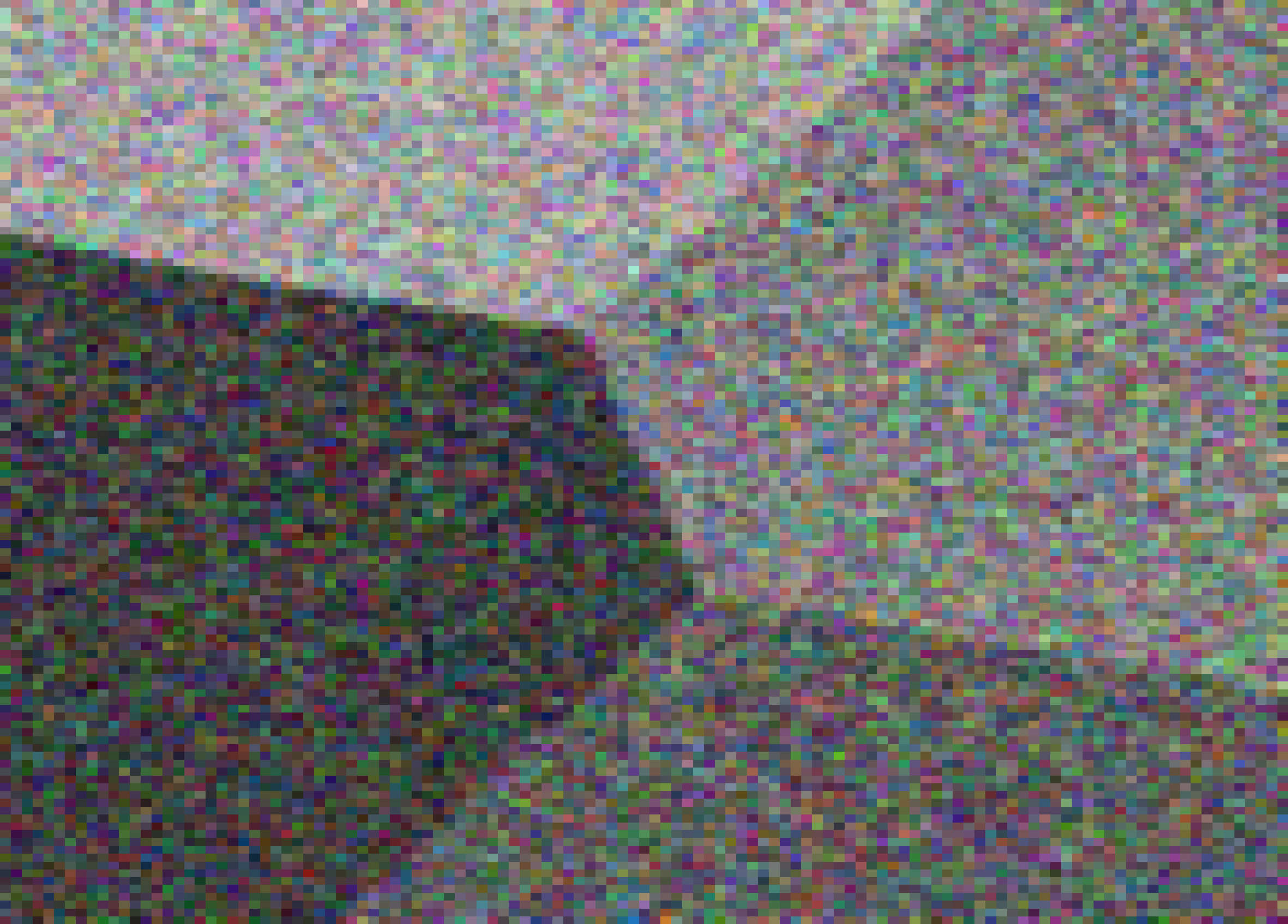} \vspace{1mm}
	\end{minipage}
	}
\hspace{-11.8mm}
	\subfigure{
	\begin{minipage}[c]{0.2\textwidth}

    \includegraphics[width=2.0cm]{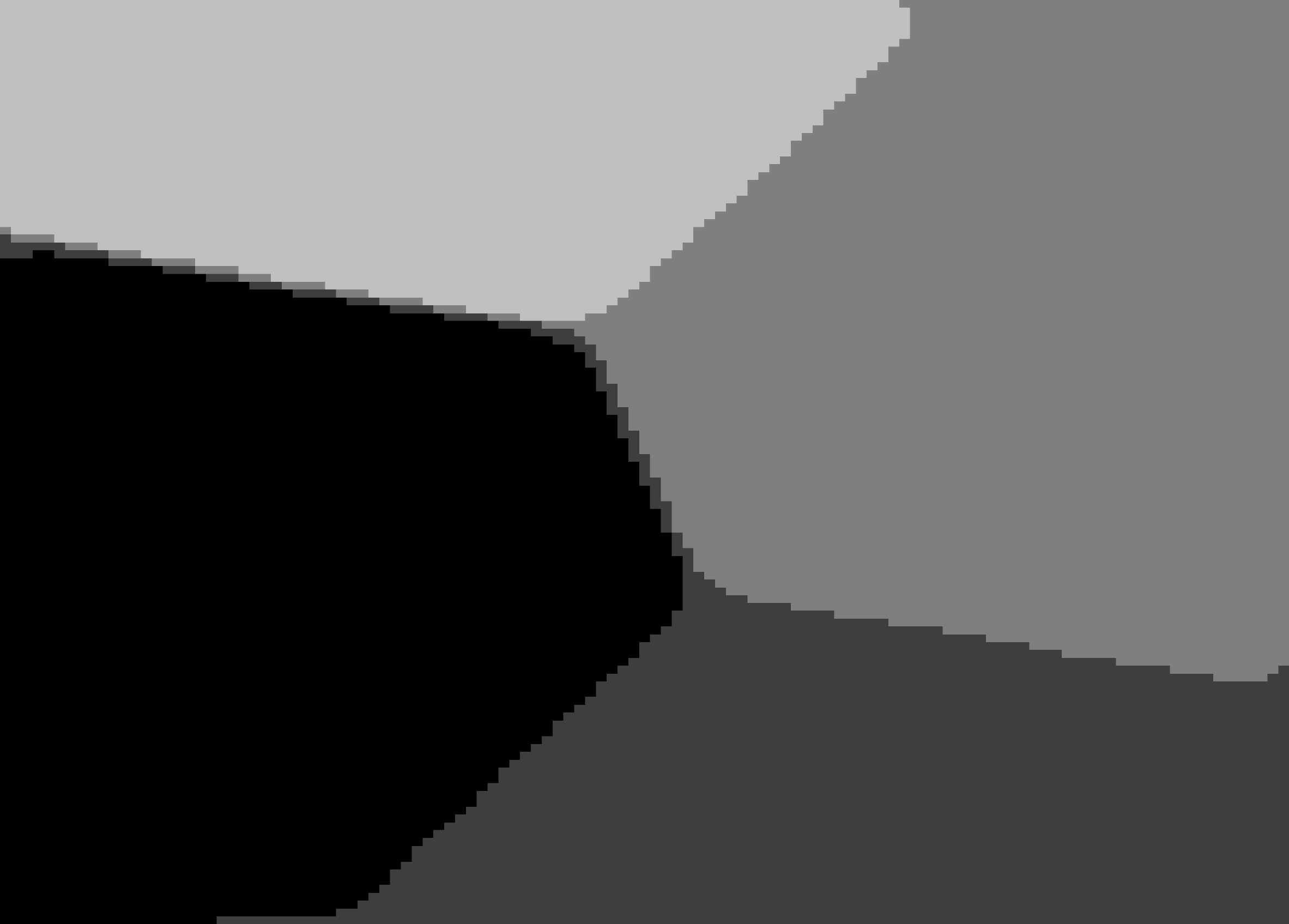}\vspace{1mm}
	\includegraphics[width=2.0cm]{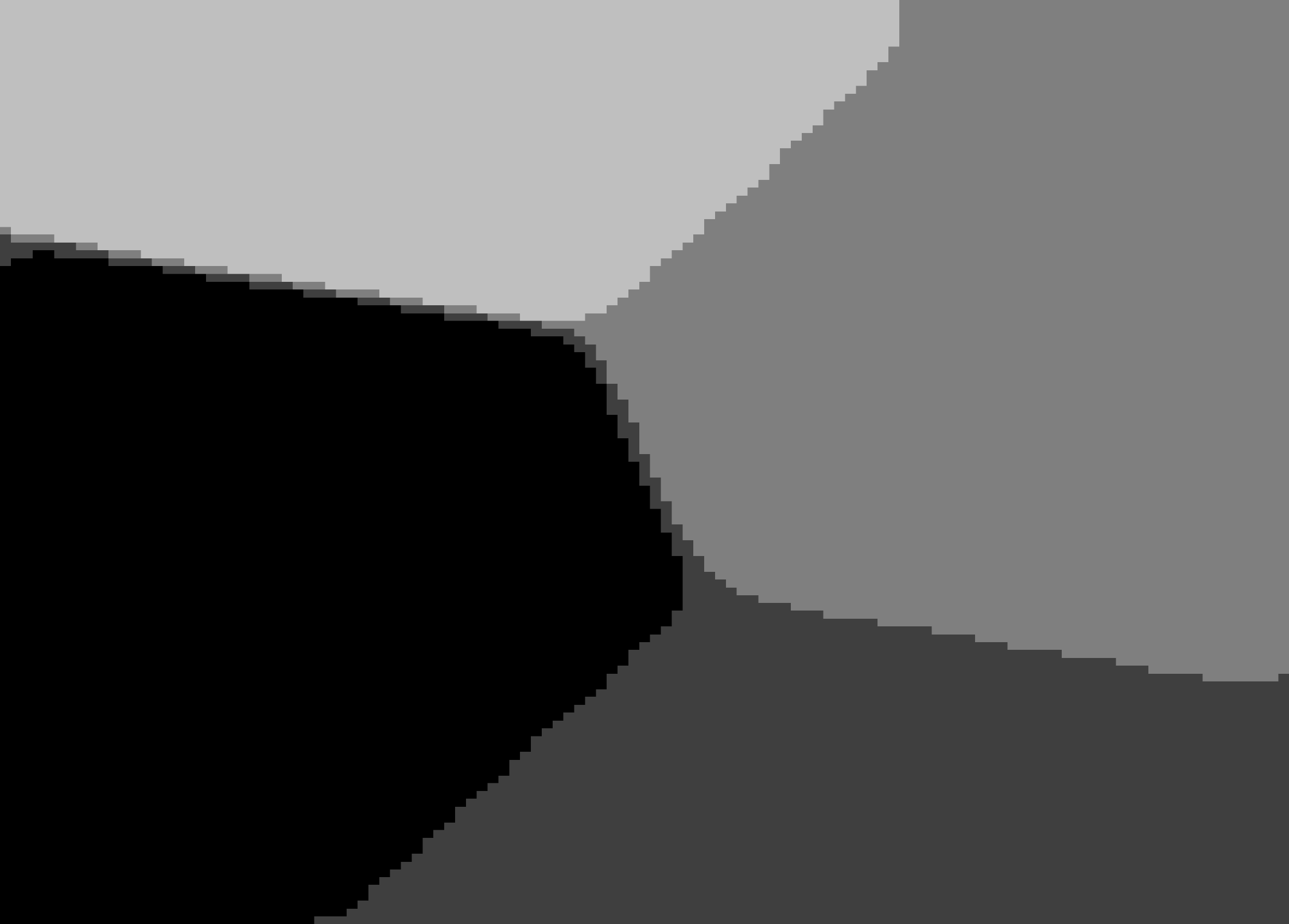}\vspace{1mm}
	\includegraphics[width=2.0cm]{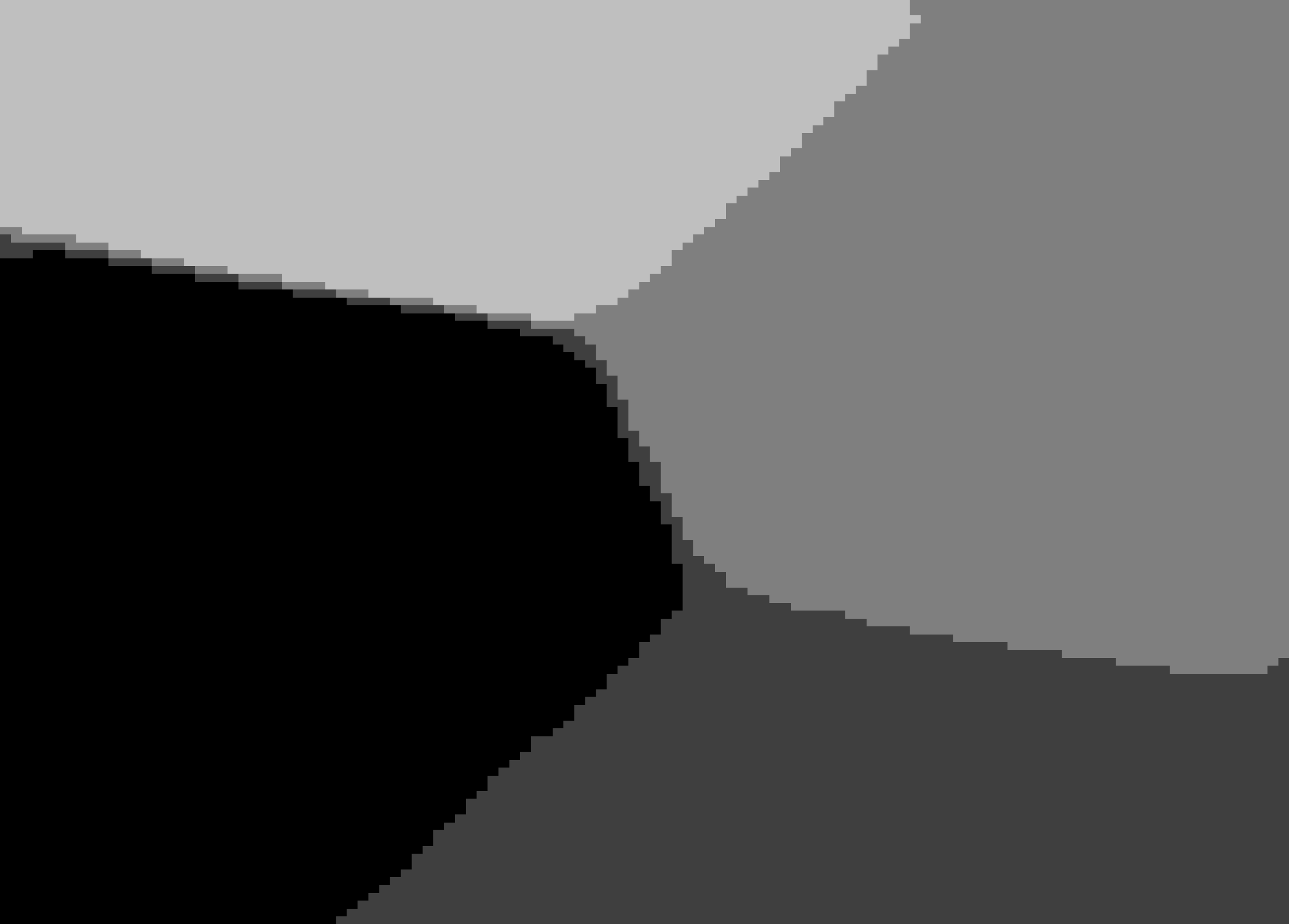}\vspace{1mm}
	\end{minipage}
	}
\hspace{-11.8mm}
	\subfigure{
	\begin{minipage}[c]{0.2\textwidth}

    \includegraphics[width=2.0cm]{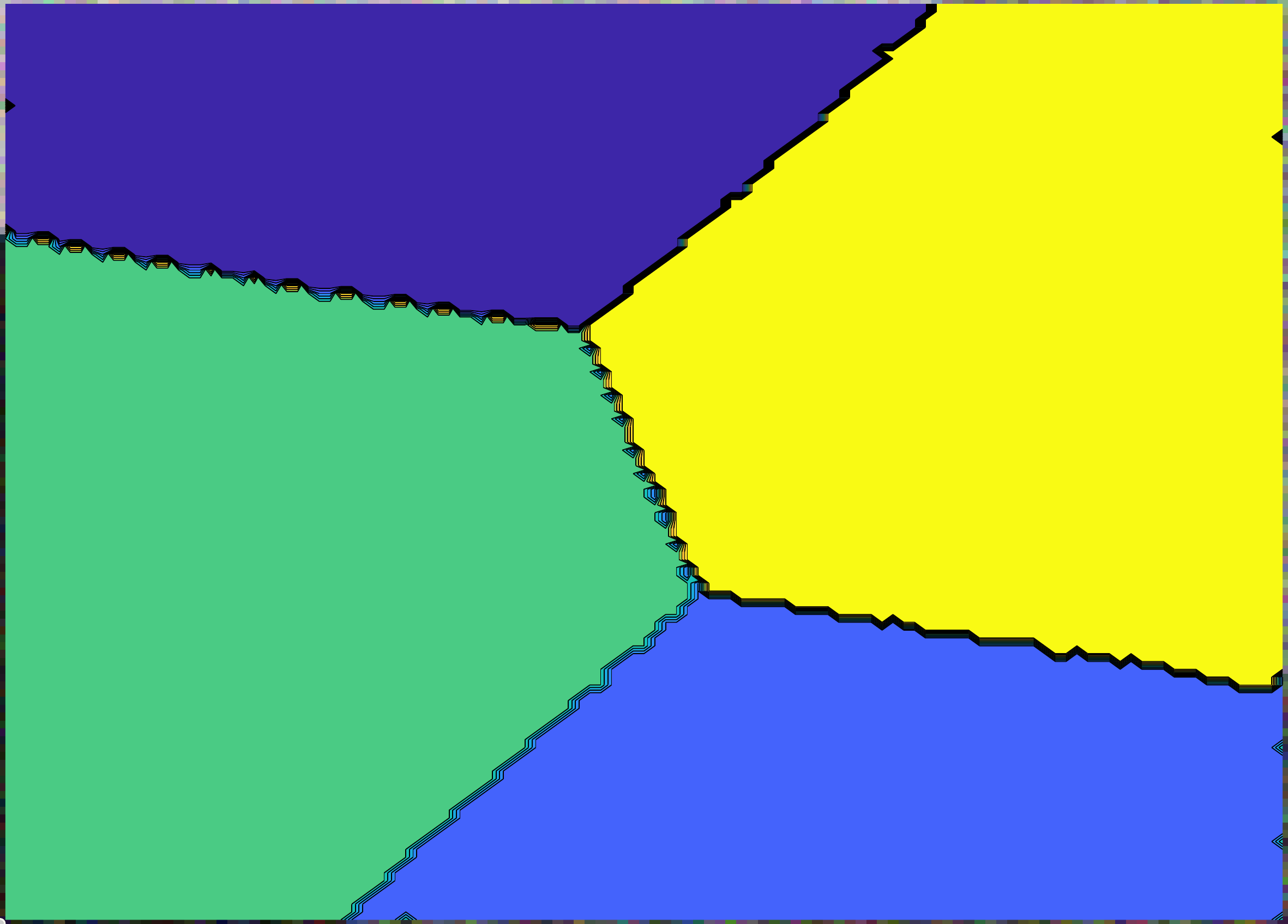}\vspace{1mm}
	\includegraphics[width=2.0cm]{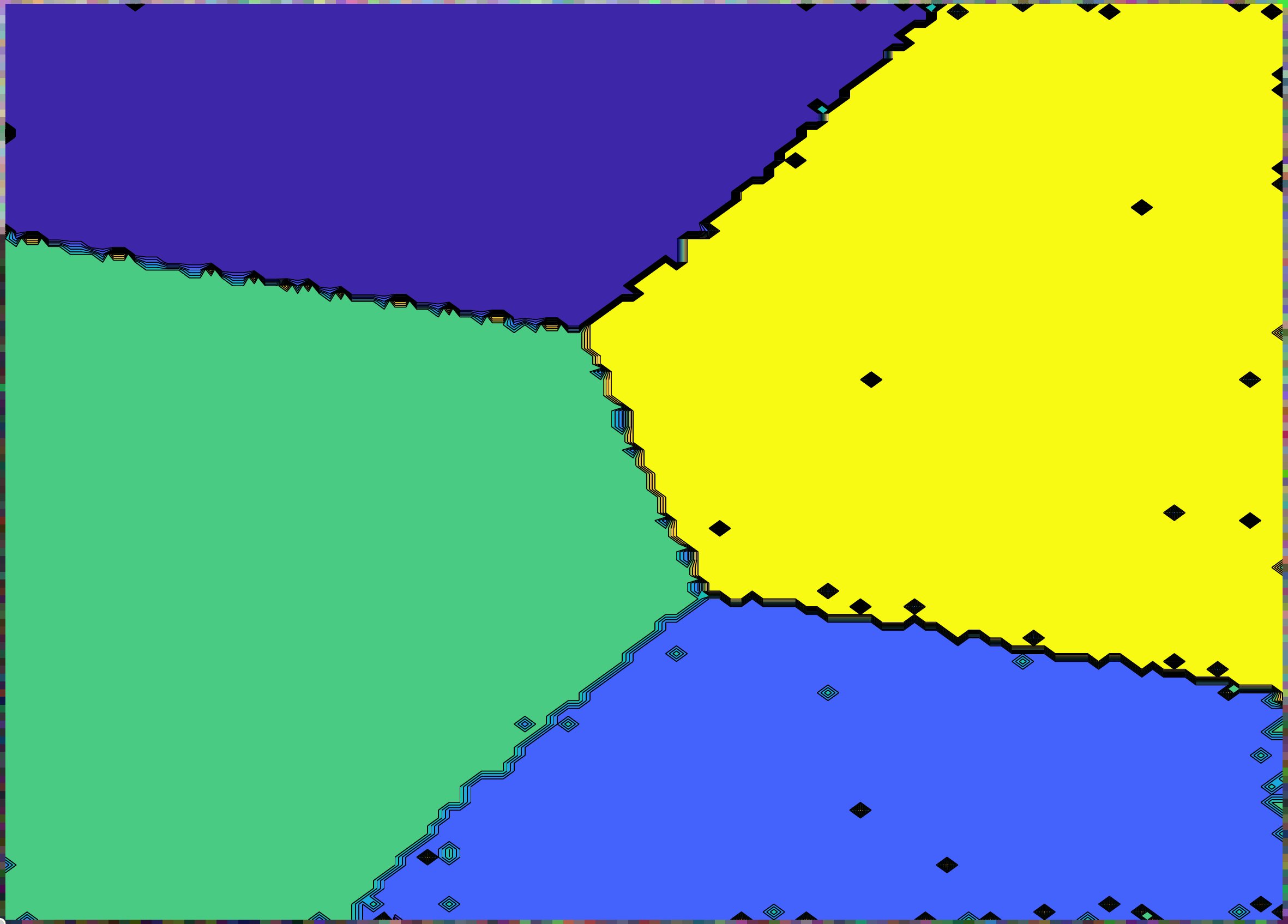}\vspace{1mm}
	\includegraphics[width=2.0cm]{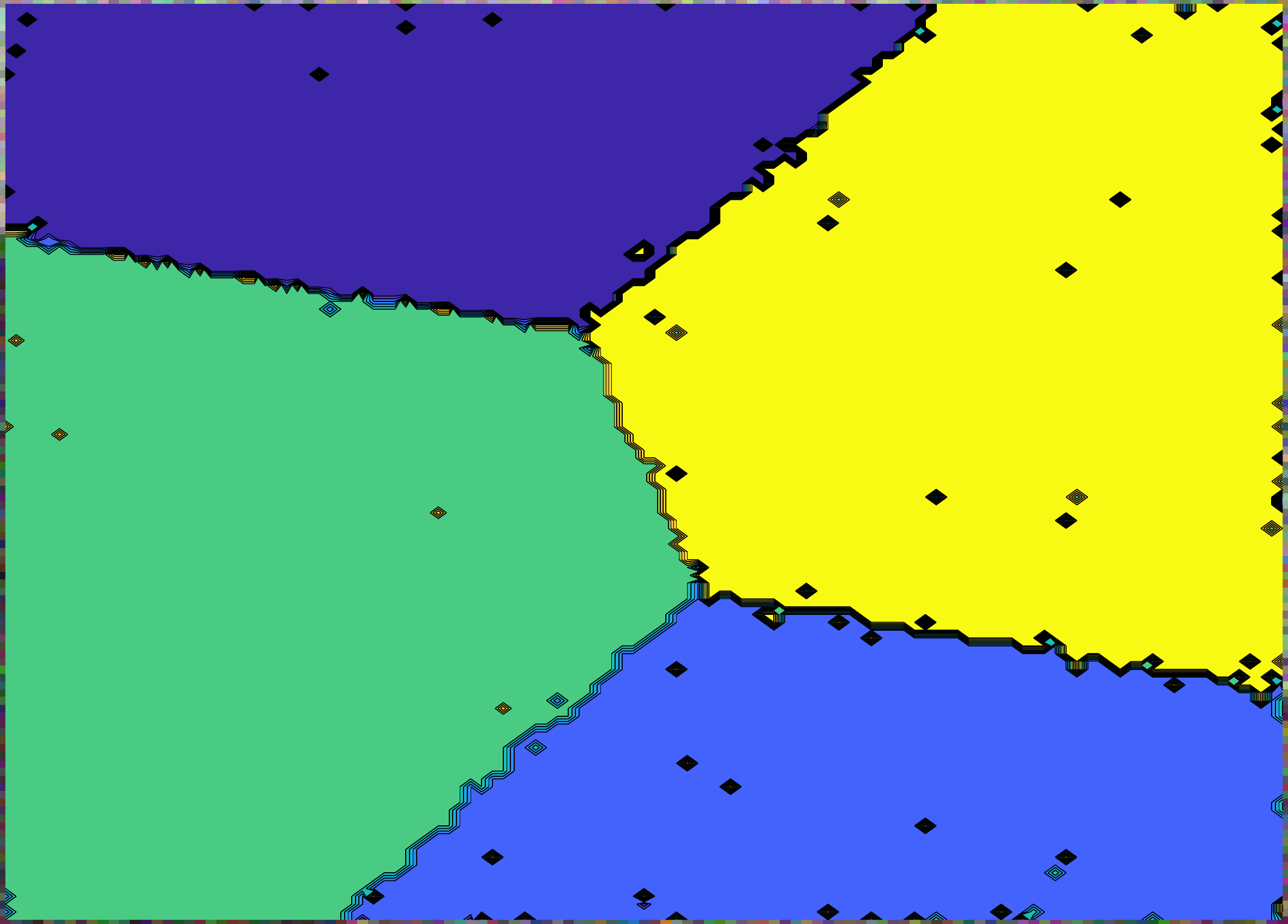}\vspace{1mm}
	\end{minipage}
	}
\hspace{-11.8mm}
\subfigure{
	\begin{minipage}[c]{0.2\textwidth}
	
    \includegraphics[width=2.0cm]{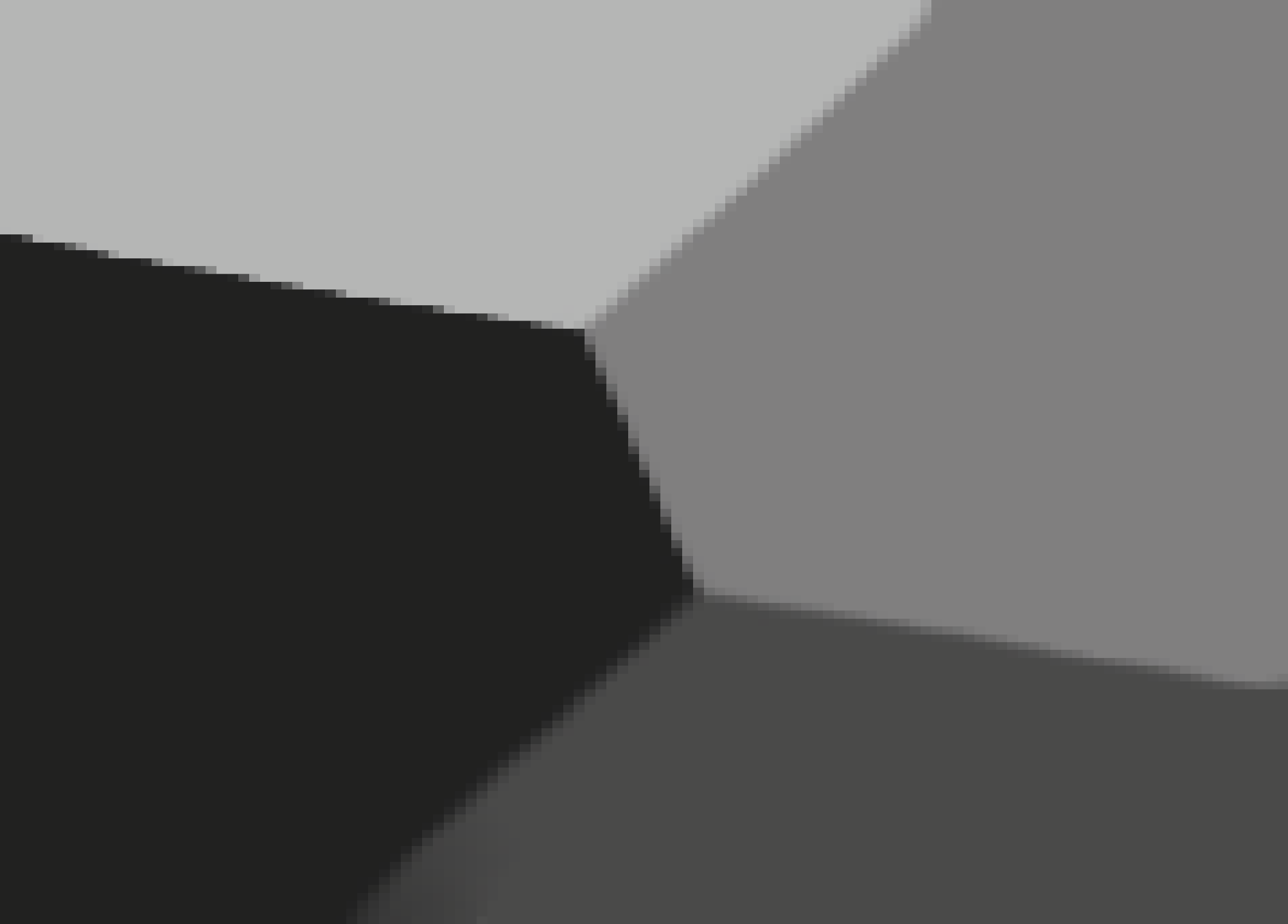}\vspace{1mm}
	\includegraphics[width=2.0cm]{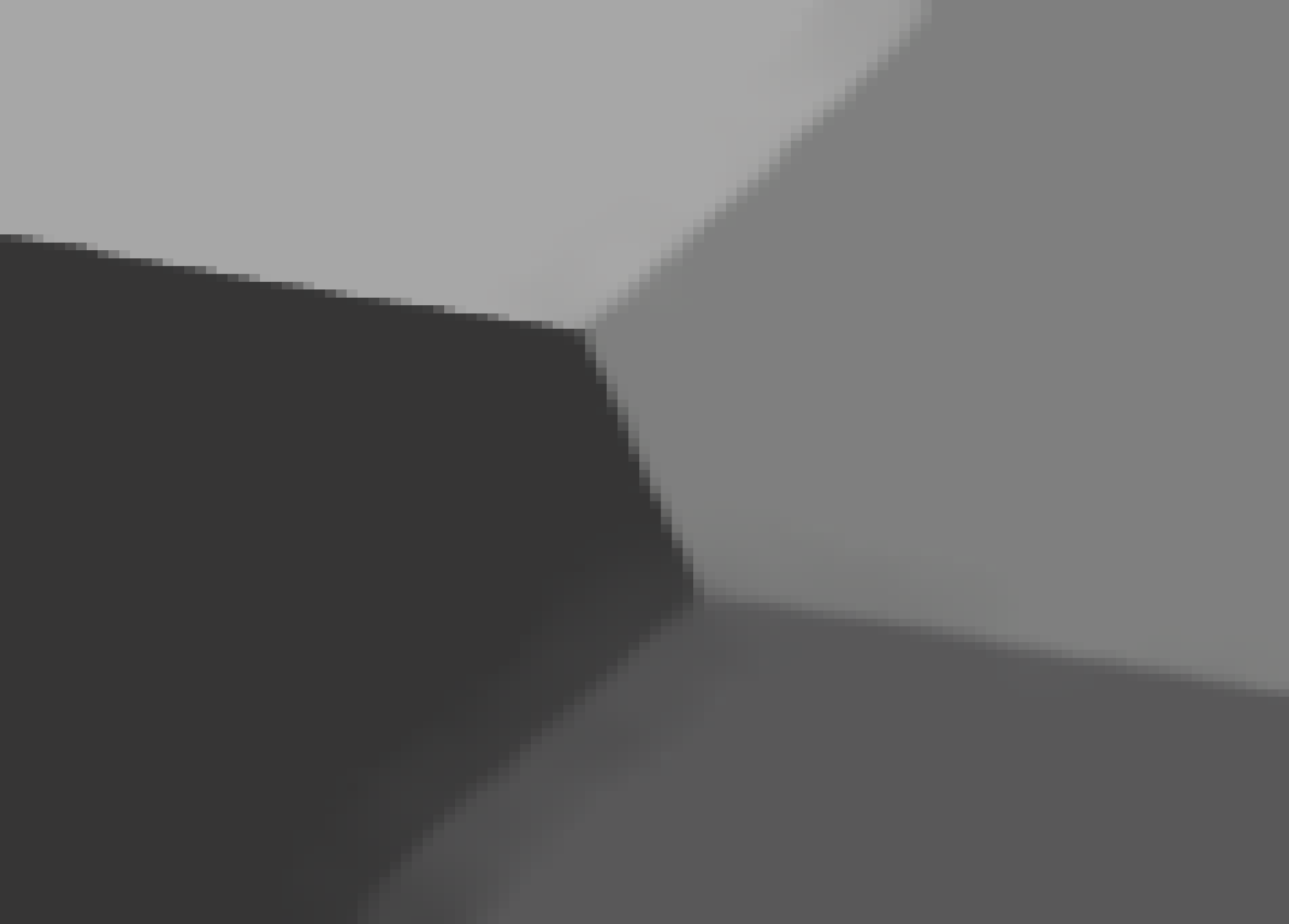}\vspace{1mm}
	\includegraphics[width=2.0cm]{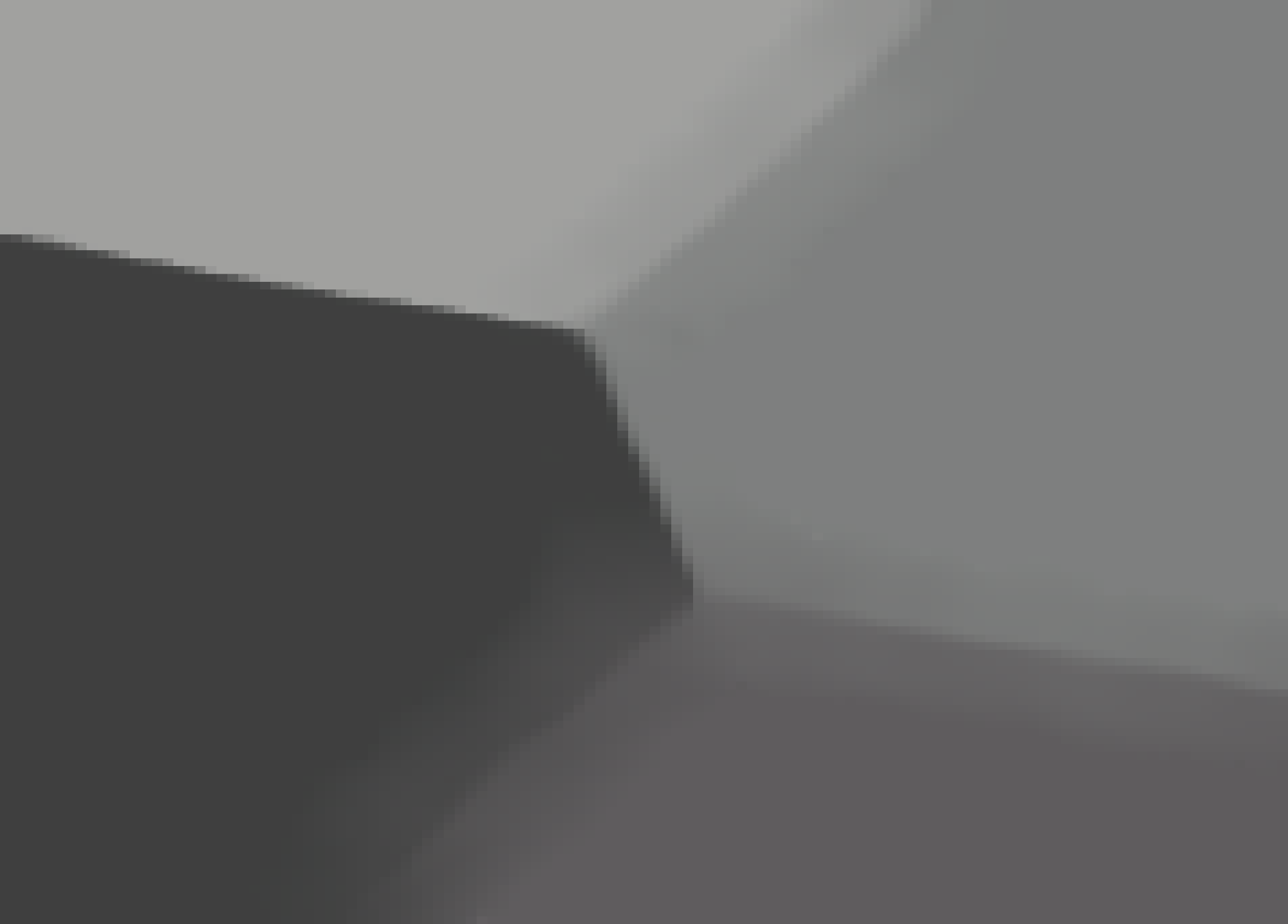}\vspace{1mm}
	\end{minipage}
	}
 \hspace{-11.8mm}
	\subfigure{
	\begin{minipage}[c]{0.2\textwidth}

    \includegraphics[width=2.0cm]{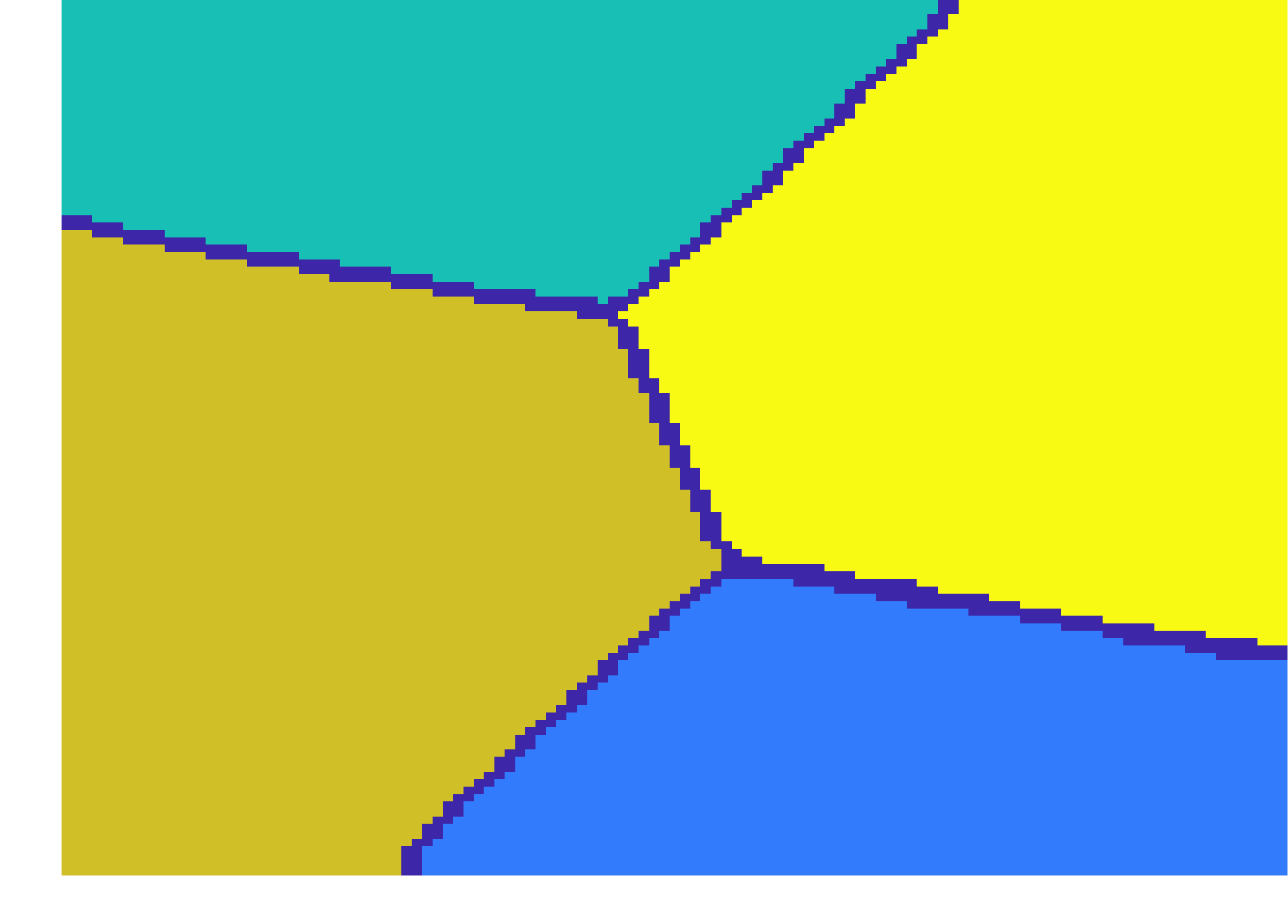}\vspace{1mm}
	\includegraphics[width=2.0cm]{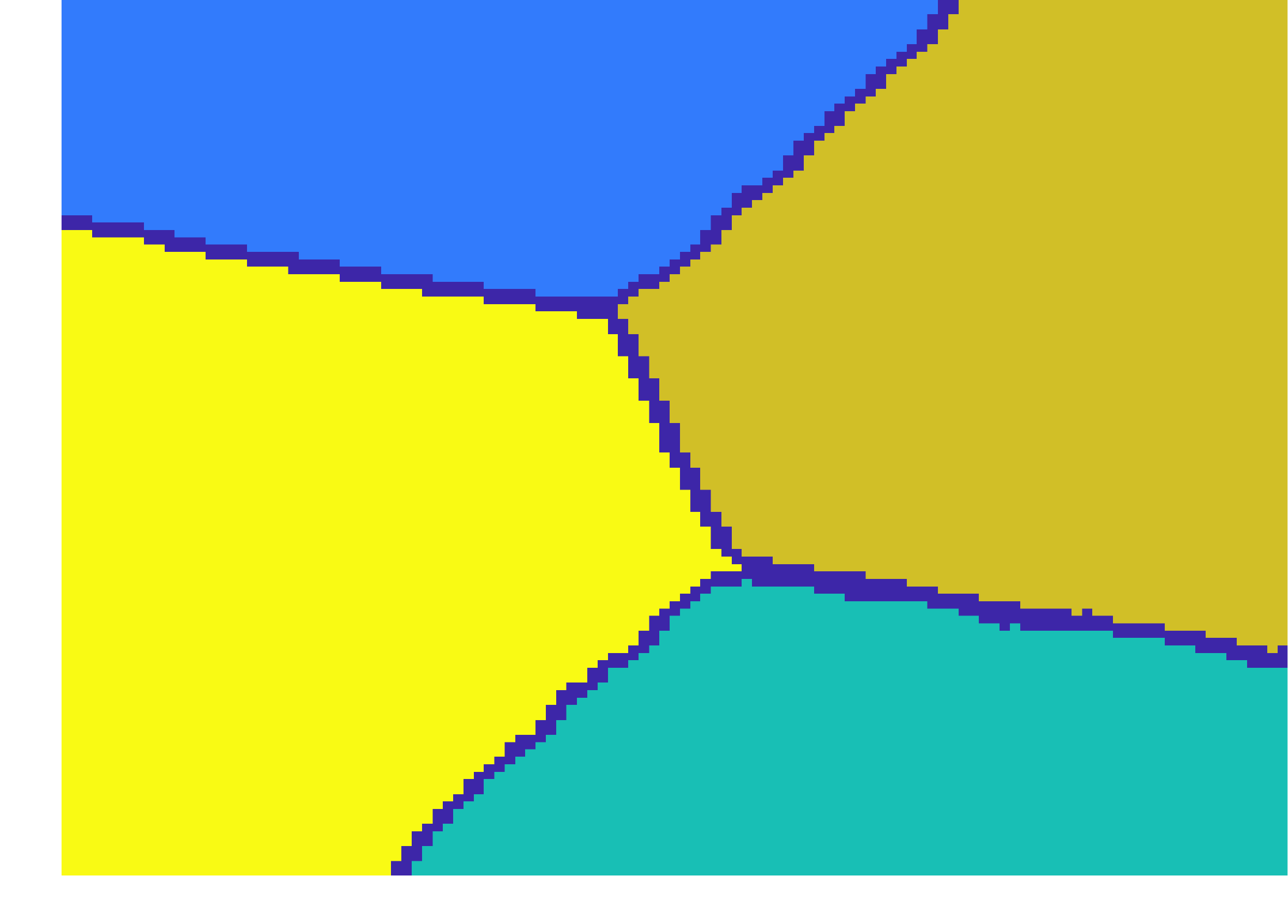}\vspace{1mm}
	\includegraphics[width=2.0cm]{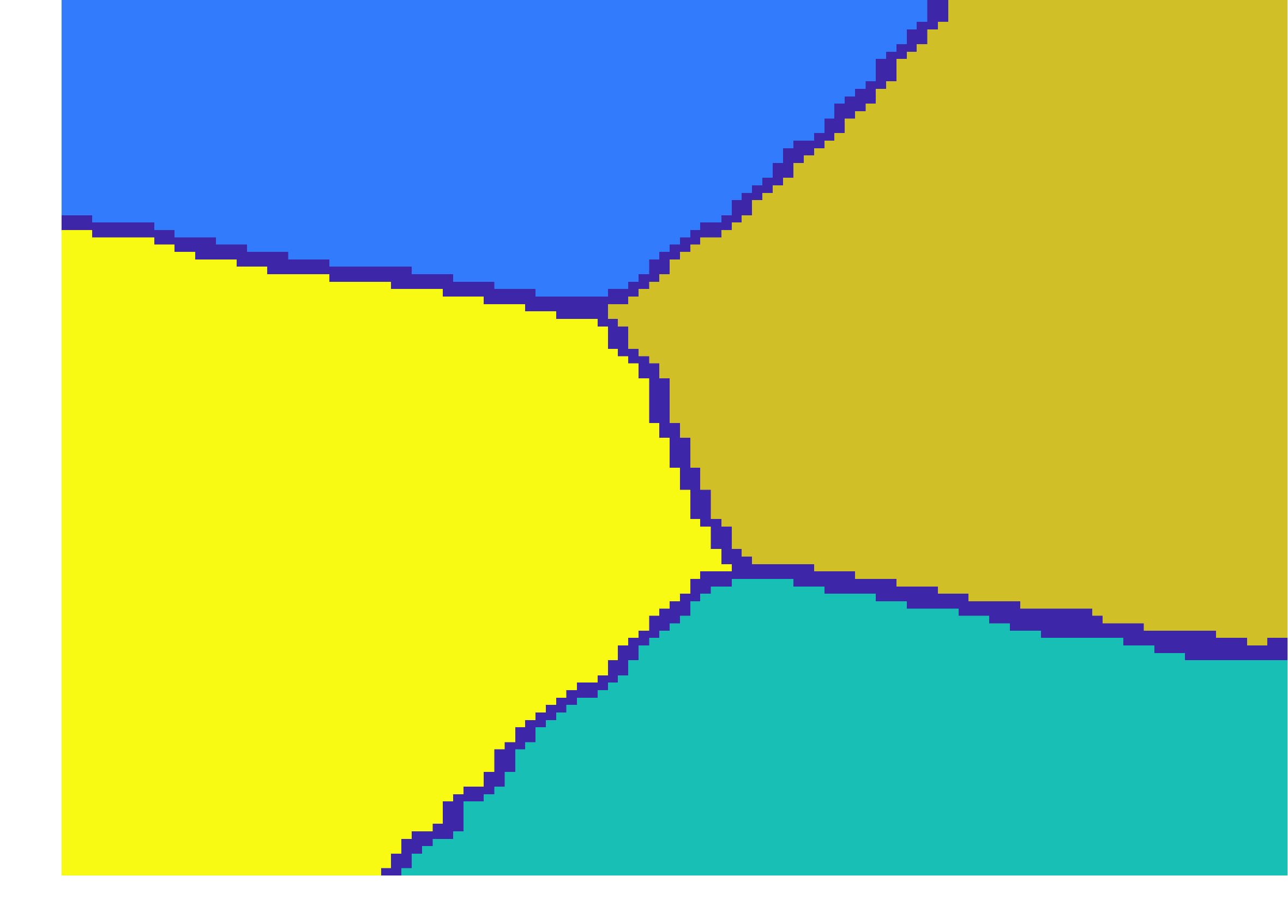}\vspace{1mm}
	\end{minipage}
	}
}
\caption{\label{fig:noise1} Segmentation comparison of image ``4blocks-2". From left to right:  Original image, SLaT\cite{Cai2015JSC}, ICTM\cite{Wang_2017}, CKA\cite{Wu2021IET}, and our ACCV model.}
\end{figure}

\begin{table}[!t]
\centering
\begin{tabular}{cccccc}
\hline
Figures & Rows     &SLaT\cite{Cai2015JSC}      &  ICTM\cite{Wang_2017}         &  CKA\cite{Wu2021IET}       &   ACCV\\ \hline
          &Row 1        & 0.99        &   8.17         & 8.38       &3.00\\  
Figure \ref{fig:noise2} &Row 2        & 0.98       &   17.57        & 8.11      & 3.61 \\ 
       &Row 3        & 0.98       &   31.04        & 8.25       & 3.77 \\\hline  
       &Row 1       & 1.34       &   2.35         &  4.33      &  1.52 \\
 Figure \ref{fig:noise1}       &Row 2        & 1.32       &   2.25         &  4.34     & 1.78       \\  
        &Row 3        & 1.34       &   2.82         & 4.36      & 1.59  \\ \hline 

\end{tabular}
 \caption{\label{tab:noise} Comparison on CPU time of Figures \ref{fig:noise2} and \ref{fig:noise1} for SLaT\cite{Cai2015JSC} , ICTM\cite{Wang_2017}, CKA\cite{Wu2021IET}, and our ACCV model.}

\end{table}
\section{Conclusion}

This paper develops the phase-field model in conjunction with the Chan--Vese fitting term, i.e., the ACCV model, to deal with the multi-phase image segmentation problems. An ADMM has been applied to minimize the objective energy functional. In the minimization process, an initialization method, Multi-IGLIM, is employed to give the initial contours. Variational calculus results in $n$ Allen--Cahn equations if there are $m = 2^n$ phases to be partitioned.  For Allen--Cahn equations,  the ETD1 and ETDRK2 schemes are adopted and analyzed for temporal discretization. Theoretically, we prove that our proposed method satisfies the discrete maximum bound principle and energy decay for ETD1 and ETDRK2 schemes, which are validated in the numerical experiments. Comparisons with other models show the effectiveness of our phase-field approach and the efficiency of the ADMM-ETD solver for segmenting various images and the robustness for handling noise.

\section*{Acknowledgments}
  We thank Prof. Zhongyi Huang at Tsinghua University for providing the MATLAB codes of \cite{Yang2019_JSC}, and thank Dr. Tingting Wu at Nanjing University of Posts and Telecommunications for providing the MATLAB codes of \cite{Wu2021IET}.


\begin{thebibliography}{99}
\addtolength{\itemsep}{-1.5 em} 
\setlength{\itemsep}{-5pt}
\bibitem{Ambrosio1990_CPAM} L. Ambrosio and V. M. Tortorelli. Approximation of functional depending on jumps by elliptic functional via t-convergence. Communications on Pure and Applied mathematics, 1990, 43(8): 999-1036.
\bibitem{CV2001TIP}T.F. Chan and L.A. Vese. Active contours without edges. IEEE Transactions on Image Processing, 2001, 10(2): 266-277.
\bibitem{Chan2003AMS} T.F. Chan, J. Shen and L. Vese. Variational PDE models in image processing. Notices AMS, 2003, 50(1): 14-26.
\bibitem {Cai2013SIAM} X. Cai, R. Chan R  and T. Zeng.  A two-stage image segmentation method using a convex variant of the mumford-shah model and thresholding. SIAM Journal on Imaging Sciences, 2013, 6(1).
\bibitem {Cai2015JSC}X. Cai, R. Chan and M. Nikolova, et al. A three-stage approach for segmenting degraded color images: smoothing, lifting and thresholding (SLaT). Journal of Scientific Computing, 2015, 5(7):1-20.
\bibitem{Chan2018} R. Chan, A. Lanza and S. Morigi, et al. Convex non-convex image segmentation. Numerische Mathematik, 2018, 138(3):635-680.
\bibitem{Li2019siam} Q. Du, L. Ju, X. Li and Z. Qiao.  Maximum principle preserving exponential time differencing schemes for the nonlocal Allen--Cahn equation. SIAM Journal on Numerical Analysis, 2019, 57 (2): 875-898.

\bibitem{Li2021sirev} Q. Du, L. Ju, X. Li and Z. Qiao.  Maximum bound principles for a class of semilinear parabolic equations and exponential time differencing schemes. SIAM Review, 2021, 63: 317--359
\bibitem{Esedoglu2006_JCP} S. Esedo\~{g}lu and Y.H.R. Tsai. Threshold dynamics for the piecewise constant Mumford-Shah functional. Journal of Computational Physics, 2006, 211(1):367-384.

\bibitem{Fu2022}Z. Fu and J. Yang.  Energy-decreasing exponential time differencing Runge-Kutta methods for phase-field models. Journal of Computational Physics, 2022, 454: 110943.
\bibitem{Hoch2010AN} M. Hochbruck and A. Ostermann. Exponential integrators. Acta Numerica, 2010,19: 209-286.
 \bibitem{Jung2007_siam} Y. Jung, S. Kang and J. Shen.  Multiphase image segmentation via Modica-Mortola phase transition. SIAM Applied Mathematics, 2007, 67, 1213-1232.
\bibitem {JuCMS2015} L. Ju, J. Zhang and Q. Du. Fast and accurate algorithms for simulating coarsening dynamics of Cahn-Hilliard equations. Computational Materials Science, 2015, 108: 272-282.
\bibitem{Ju2015JSC} L. Ju, J. Zhang,  L. Zhu and Q. Du. Fast explicit integration factor methods for semilinear parabolic equations. Journal of Scientific Computing, 2015, 62(2): 431-455.

 \bibitem{Li2005CVPR}  C. Li, C. Xu and  C. Gui, et al. Level set evolution without re-initialization: A new variational formulation. IEEE Computer Society Conference on Computer Vision and Pattern Recognition, 2005.
\bibitem{Li2016_SINUM} D. Li, Z. Qiao and T. Tang. Characterizing the stabilization size for semi-implicit Fourier-spectral method to phase field equations. SIAM Journal on Numerical Analysis, 2016, 54(3): 1653-1681.
\bibitem{Lie2006_MC} J. Lie, M. Lysaker and X.C Tai. A variant of the level set method and applications to image segmentation. Mathematics of Computation, 2006, 75(255): 1155-1174.
\bibitem{Lie2006_TIP}  J. Lie, M. Lysaker and X.C Tai. A binary level set model and some applications to Mumford-Shah image segmentation. IEEE transactions on image processing, 2006, 15(5): 1171-1181.
\bibitem{Liu2021} C. Liu, Z. Qiao and Q. Zhang.  Two-phase segmentation for intensity inhomogeneous images by the Allen--Cahn Local Binary Fitting Model. SIAM Journal on Scientific Computing, 2022, 44(1): B177–B196.
\bibitem{Liu2022}  C. Liu, Z. Qiao and Q. Zhang.  An active contour model with local variance force term and its efficient minimization solver for multi-phase image segmentation.	arXiv:2203.09036 [cs.CV].
\bibitem{ma2020fast}
J.~Ma, D.~Wang, X.P. Wang, and X.~Yang. {A characteristic
  function-based algorithm for geodesic active contours}, SIAM Journal on Imaging Sciences, 14 (2021): 1184-1205.
\bibitem{Macqueen1967MSP} J.B. Macqueen. Some methods for classification and analysis of multivariate observations. Proceedings of the Fifth Berkeley Symposium on Mathematical Statistics and Probability, 1967 University of California Press, 1967.
\bibitem{Zhang2021_NMTMA} Z. Qiao and Q. Zhang. Two-phase image segmentation by the Allen--Cahn equation and a nonlocal edge detection operator. Numerical Mathematics: Theory, Methods and Applications, 2022, Accepted. arXiv: 2104.08992.
\bibitem{Samson2000_IJCV}  C. Samson, L. Blanc-F\'{e}raud and G. Aubert, et al. A level set model for image classification. International Journal of Computer Vision, 2000, 40(3): 187-197.

\bibitem{ShenCMS_2016}J. Shen, T. Tang and J. Yang. On the maximum principle preserving schemes for the generalized Allen--Cahn equation. Communications in Mathematical Sciences, 2016, 14(6): 1517-1534.

\bibitem{ShenSAV} J. Shen, J. Xu and J. Yang. A new class of efficient and robust energy stable schemes for gradient
ows. SIAM Rev, 2019, 61: 474-506.

\bibitem{Shen2010_DCDS}J. Shen and X. Yang. Numerical approximations of Allen--Cahn and Cahn-Hilliard equations. Discrete and Continuous Dynamical Systems, 2010, 28(4): 1669.

\bibitem{Tai2021JSC}X.C. Tai, L.J. Deng and K. Yin. A multigrid algorithm for maxflow and min-cut problems with applications to multiphase image segmentation. Journal of Scientific Computing, 2021, 87(3): 1-22.
\bibitem{Vese2002IJCV} L.A. Vese and T.F. Chan. A multiphase level set framework for image segmentation using the Mumford and Shah model. International Journal of Computer Vision, 2002, 50(3): 271-293.
\bibitem{Wang_2017} D. Wang, H. Li and X. Wei and X. Wang. An efficient iterative thresholding method for image segmentation. Journal of Computational Physics, 2017, 350: 657-667.
 \bibitem{Wang_2019} D. Wang and X.P. Wang. The iterative convolution-thresholding method (ICTM) for image segmentation. arXiv preprint arXiv:1904.10917, 2019.
\bibitem{AAMM2020}T. Wu and J. Shao. Non-convex and convex coupling image segmentation via TGpV regularization and thresholding. Advances in Applied Mathematics and Mechanics, 2020, 12(3):849-878.
\bibitem{Wu2021_SP}T. Wu, X. Gu and Y. Wang, et al. Adaptive total variation based image segmentation with semi-proximal alternating minimization. Signal Processing, 2021, 183(1):108017.
\bibitem{Wu2021IET} T. Wu, X.Gu and J. Shao, et al. Color image segmentation based on a convex K-means approach. IET Image Process, 2021, 15: 1596--1606.


\bibitem{YangJCP_2016}X. Yang. Linear, first and second-order, unconditionally energy stable numerical schemes for the phase field model of homopolymer blends. Journal of Computational Physics, 2016, 327: 294-316.
\bibitem{Yang2019_JSC} W. Yang,  Z. Huang and  W. Zhu. Image segmentation using the Cahn-Hilliard equation. Journal of Scientific Computing, 2019, 79(2):1057-1077.

\bibitem{Zhao1996JCP}H.K. Zhao, T. Chan and B. Merriman, et al. A variational level set approach to multiphase motion. Journal of Computational Physics, 1996, 127(1): 179-195.


\bibitem{Zhu2016_JSC} L. Zhu, L. Ju and W. Zhao. Fast high-order compact exponential time differencing Runge–Kutta methods for second-order semilinear parabolic equations. Journal of Scientific Computing, 2016, 67(3): 1043-1065.


\end{thebibliography}
\end{document}